\DeclareRobustCommand{\SkipTocEntry}[5]{}
\definecolor{dblue}{rgb}{0,0,0.70}
\newcommand*{\rom}[1]{\expandafter\@slowromancap\romannumeral {\sharp}1@}
\renewcommand{\tocsection}[3]{%
  \indentlabel{\@ifnotempty{#2}{\bfseries\ignorespaces#1 #2\quad}}\bfseries#3}
\renewcommand{\tocsubsection}[3]{%
  \indentlabel{\@ifnotempty{#2}{\ignorespaces#1 #2\quad}}#3}
\renewcommand{\tocsubsubsection}[3]{%
  \indentlabel{\hspace{30pt}\@ifnotempty{#2}{\ignorespaces#1 #2\quad}}#3}
\newcommand\@dotsep{4.5}
\def\@tocline#1#2#3#4#5#6#7{\relax
  \ifnum #1>\c@tocdepth 
  \else
    \par \addpenalty\@secpenalty\addvspace{#2}%
    \begingroup \hyphenpenalty\@M
    \@ifempty{#4}{%
      \@tempdima\csname r@tocindent\number#1\endcsname\relax
    }{%
      \@tempdima#4\relax
    }%
    \parindent\z@ \leftskip#3\relax \advance\leftskip\@tempdima\relax
    \rightskip\@pnumwidth plus1em \parfillskip-\@pnumwidth
    #5\leavevmode\hskip-\@tempdima{#6}\nobreak
    \leaders\hbox{$\m@th\mkern \@dotsep mu\hbox{.}\mkern \@dotsep mu$}\hfill
    \nobreak
    \hbox to\@pnumwidth{\@tocpagenum{\ifnum#1=1\bfseries\fi#7}}\par
    \nobreak
    \endgroup
  \fi}
\renewcommand\csname r@tocindent0\endcsname{0pt}
\def\l@subsection{\@tocline{2}{0pt}{2.5pc}{5pc}{}}
\theoremstyle{definition}
\newcommand{\Ord}{\mathrm{Ord}}
\newcommand{\tc}{\mathrm{tc}}
\newcommand{\WO}{\mathrm{WO}}
\newcommand{\LO}{\mathrm{LO}}
\newcommand{\wo}{\mathrm{wo}}
\newcommand{\cof}{\mathrm{cof}}
\newcommand{\ck}{\mathrm{ck}}
\renewcommand{\Col}{\mathrm{Col}}
\selectfont\symbol{60}\fontencoding{\encodingdefault}}
\selectfont\symbol{62}\fontencoding{\encodingdefault}}
\newcommand{\tmdate}[1]{\today}
\newcommand{\ran}{\mathrm{ran}}
\newcommand{\otp}{\mathrm{otp}} 
\newcommand{\PP}{\mathbb{P}} 
\newcommand{\QQ}{\mathbb{Q}}
\newcommand{\LL}{\mathbb{L}}
\newcommand{\NN}{\mathbb{N}}
\newcommand{\ZFC}{\mathsf{ZFC}}
\newcommand{\KP}{\mathsf{KP}}
\newcommand{\PD}{{\sf PD}} 
\newcommand{\MM}{\mathbb{M}}
\newcommand{\inter}{{\rm int}}
\newcommand{\ittm}{\rm{ittm}}
\newcommand{\borelomega}{Borel${}^{(\omega)}$}
\newcommand{\borelalpha}{Borel${}^{(\alpha)}$}
\newcommand{\borelgamma}{Borel${}^{(\gamma)}$}
\newcommand{\borellessgamma}{Borel${}^{({<}\gamma)}$}
\newcommand{\borelinfty}{Borel${}^{(\infty)}$}
\newcommand{\borel}{Borel${}^{({<}\omega_1)}$}
\DeclareMathOperator{\rank}{\mathrm{rank}}
\newtheorem{fact}{Fact}[section]
\newtheorem{theorem}[fact]{Theorem}
\newtheorem{lemma}[fact]{Lemma}
\newtheorem{proposition}[fact]{Proposition}
\newtheorem{definition}[fact]{Definition}
\newtheorem{question}[fact]{Question}
\newtheorem{problem}[fact]{Problem}
\newtheorem*{problem*}{Problem}
\newtheorem*{generalproblem*}{General Problem}
\newtheorem*{problem A}{Problem 1}
\newtheorem*{problem B}{Problem 2}
\newtheorem*{claim*}{Claim}
\theoremstyle{remark} 
\newtheorem{remark}[fact]{Remark}
\newenvironment{enumerate-(a)}{\begin{enumerate}[label={\upshape (\alph*)}, leftmargin=2pc]}{\end{enumerate}}
\newenvironment{enumerate-(a)-r}{\begin{enumerate}[label={\upshape (\alph*)}, leftmargin=2pc,resume]}{\end{enumerate}}
\newenvironment{enumerate-(A)}{\begin{enumerate}[label={\upshape (\Alph*)}, leftmargin=2pc]}{\end{enumerate}}
\newenvironment{enumerate-(A)-r}{\begin{enumerate}[label={\upshape (\Alph*)}, leftmargin=2pc,resume]}{\end{enumerate}}
\newenvironment{enumerate-(i)}{\begin{enumerate}[label={\upshape (\roman*)}, leftmargin=2pc]}{\end{enumerate}}
\newenvironment{enumerate-(i)-r}{\begin{enumerate}[label={\upshape (\roman*)}, leftmargin=2pc,resume]}{\end{enumerate}}
\newenvironment{enumerate-(I)}{\begin{enumerate}[label={\upshape (\Roman*)}, leftmargin=2pc]}{\end{enumerate}}
\newenvironment{enumerate-(I)-r}{\begin{enumerate}[label={\upshape (\Roman*)}, leftmargin=2pc,resume]}{\end{enumerate}}
\newenvironment{enumerate-(1)}{\begin{enumerate}[label={\upshape (\arabic*)}, leftmargin=2pc]}{\end{enumerate}}
\newenvironment{enumerate-(1)-r}{\begin{enumerate}[label={\upshape (\arabic*)}, leftmargin=2pc,resume]}{\end{enumerate}}
\newenvironment{itemizenew}{\begin{itemize}[leftmargin=2pc]}{\end{itemize}}
\begin{document}




\author{Merlin Carl}
\address{}
\email{}
\urladdr{}

\author{Philipp Schlicht}
\address{}
\email{}
\urladdr{}

\author{Philip Welch}
\address{}
\email{}
\urladdr{}

\thanks{This project has received funding from the European Union's Horizon 2020 research and innovation programme under the Marie Sk{\l}odowska-Curie grant agreement No 794020 of the second-listed author (Project \emph{IMIC: Inner models and infinite computations}). The second-listed author was partially supported by FWF grant number I4039. 
This research was funded in whole or in part by EPSRC grant number EP/V009001/1 of the second- and third-listed authors. 
For the purpose of open access, the authors have applied a 'Creative Commons Attribution' (CC BY) public copyright licence to any Author Accepted Manuscript (AAM) version arising from this submission.
}

\title{Countable ranks at the first and second projective levels} 
\date{\today}

\begin{abstract} 
A rank is a notion in descriptive set theory that describes ranks such as the   Cantor-Bendixson rank on the set of closed subsets of a Polish space, differentiability ranks on the set of differentiable functions in $C[0,1]$ such as the Kechris-Woodin rank and many other ranks in descriptive set theory and real analysis. 
The complexity of many natural ranks is $\Pi^1_1$ or $\Sigma^1_2$. 
We propose to understand the least length of ranks on a set as a measure of its complexity. 
Therefore, 
the aim is to understand which lengths such ranks may have. 
The main result determines the suprema of lengths of countable ranks at the first and second projective levels. 
Furthermore, we characterise 
the existence of countable ranks on specific classes of $\Sigma^1_2$ sets. 
The connections arising between $\Sigma^1_2$ sets with countable ranks on the one hand and $\Sigma^1_2$ Borel sets on the other  
lead to 
a conjecture that 
unifies several results in descriptive set theory such as the Mansfield-Solovay theorem and a recent result of Kanovei and Lyubetsky. 
\end{abstract} 

\maketitle

\thispagestyle{plain} 


\setcounter{tocdepth}{1}
\tableofcontents

\section{Introduction}

\subsection{Background}

A \emph{rank} is a layering of a set of reals that can arise, for instance, from a transfinite iteration of a derivation process. 
It represents a set of reals as a union of a chain of subsets such that each layer is less complex than the whole set. 
A precise definition is given in Section \ref{section ranks}. 
A basic example is the Cantor-Bendixson rank of a closed set, which is obtained by iteratively removing isolated points. 
Ranks are ubiquitous in real analysis, for example ranks on the set of closed sets of uniqueness for trigonometric series following work of Piatetski-Shapiro \cite{kechris1987descriptive}, 
Kechris-Woodin ranks on sets of differentiable functions \cite{westrick2014lightface} and 
oscillation ranks on sets of pointwise convergent sequences of continuous functions \cite{kechris2012classical}. Examples in topological dynamics include ranks on distal flows 
using Furstenberg's structure theorem \cite{beleznay1995collection} and ranks on shifts of finite type \cite{westrick2019topological}. 
These are all $\Pi^1_1$-ranks, so in particular their layers are Borel. 
Ranks are of interest at higher complexities as well. 
Martin and Moschovakis constructed ranks on projective sets assuming the axiom of projective determinacy \cite{Mo09}. 
The use of ranks to build trees \cite{St08} is relevant for the core model induction 
in inner model theory \cite[Section 3.3]{SchSt}. 
A further important application of ranks appears in Hjorth's analysis of the strength of ${\bf \Pi}^1_2$ Wadge determinacy \cite{Hj96}.  

We first describe two examples of ranks on $\Pi^1_1$ sets from descriptive set theory. 
The set $\WO$ of all wellorders on $\NN$ supports a natural rank with layers $\WO_\alpha$ of all wellorders with order type $\alpha$. 
The Cantor-Bendixson rank on the $\Pi^1_1$ set of countable closed subsets of a Polish space is defined by forming a sequence of derivatives. 
In successor steps, the isolated points of a countable closed set $C$ are removed to obtain its derivative $C'$. 
In limit steps, one forms the intersection. 
This defines a sequence of iterated derivatives $C^{(\alpha)}$ of $C$ and its rank is by definition the least $\alpha$ with $C^{(\alpha)}=\emptyset$, 
so the $\alpha$-th layer consists of all countable closed sets of rank $\alpha$. 



In real analysis, the Kechris-Woodin rank is defined on the $\Pi^1_1$ set of differentiable functions in $C[0,1]$ \cite[Section 34.F]{kechris2012classical}. 
Let $\Delta_f(x,y)= \frac{f(x)-f(y)}{x-y}$ denote the slope between $x,y \in [0,1]$. 
Kechris and Woodin define a \emph{derivative} as follows. 
One removes points in a closed set $C$ at which $f$ is close to being differentiable in the sense that the oscillation of $f'$ at $x$ is no more than $\epsilon$. 
The derivative $C'_{f,\epsilon}$ is defined as the remainder consisting of all $x\in C$ such that for all $\delta>0$, there exist rational intervals $[p,q]$ and $[r,s]$ in $B(x,\delta)\cap [0,1]$ such that 
$[p,q] \cap [r,s] \cap C\neq\emptyset$ and $|\Delta_f(p,q)- \Delta_f(r,s)|\geq \epsilon$. 
Starting from $[0,1]$, one defines a sequence  by $D^0_{f,\epsilon}=[0,1]$, $D^{\alpha+1}_{f,\epsilon}=(D^\alpha_{f,\epsilon})'_{f,\epsilon}$ and $D^\lambda_{f,\epsilon}= \bigcap_{\alpha<\lambda}D^{\alpha+1}_{f,\epsilon}$ for limits $\lambda$. 
The Kechris-Woodin rank $|f|$ of a differentiable function $f \in C[0,1]$ is the least ordinal $\alpha$ such that $D^\alpha_{f,\epsilon}=\emptyset$ for all $\epsilon>0$. 
For example, any continuously differentiable function $f$ has rank $1$.  

What can be said about the lengths of ranks from an abstract viewpoint? 
While any $\Pi^1_1$ or $\Sigma^1_2$ set admits some rank \cite{Mo09}, this does not say which sets admit ranks of which lengths. 
We here focus on ranks of length at most $\omega_1$, since most natural ranks have this property; the length of a rank given by either an infinite derivation or an infinite time computation \cite{decisiontimes} is typically at most $\omega_1$, since a terminating derivation or computation with real input is countable.\footnote{While this holds for many natural derivatives and notions of computation, it is not meant as a precise mathematical statement. 
It is easy to see for infinite computations as described in Section \ref{section - infinite time}.}  
Therefore, our first aim is to determine the supremum of those countable ordinals that can arise as the length of a rank. 
We are further interested in connections between the complexity of a set and the length of ranks that it supports. 
For instance, a $\Pi^1_1$ set admits a countable rank, i.e. one of countable length, if and only if it is Borel. 
Thus the least length of ranks on a $\Pi^1_1$ set can be understood as a measure of complexity 
that generates a hierarchy of $\Pi^1_1$ Borel sets. 
For $\Sigma^1_2$ sets, the above equivalence fails. 
We therefore aim to characterise those $\Sigma^1_2$ sets that support countable ranks. 
This finally leads to problems about $\Sigma^1_2$ Borel sets that are related to the study of $\Pi^1_1$ Borel sets initiated by Kechris, Marker and Sami \cite{MR1011178}. 

\subsection{Results}

In our main result, we determine the suprema of lengths of countable $\Pi^1_1$ ranks and of $\Sigma^1_2$ ranks. 
Since these are the same ordinals, the suprema are the same for all classes in between as well. 
To state the result, we write $\tau$ for the supremum of $\Sigma_2$-definable ordinals over $L_{\omega_1}$.\footnote{$\omega_1$ always denotes $\omega_1^V$.} 
We shall give a more useful definition of $\tau$ in Section \ref{subsection Kechris ordinal}. 
This is a variant of stable ordinals from proof theory\footnote{see \cite[Section 5]{rathjen2017higher}.} that we call \emph{robust}. 
We obtain a number of variants of this result. 
For instance, the suprema of ranks of $\Sigma^1_2$ wellfounded relations and lengths of $\Pi^1_1$ prewellorders\footnote{A \emph{prewellorder} is a wellfounded linear quasiorder.} on $\Pi^1_1$ sets, among others, also equal $\tau$. 

\begin{theorem} 
\label{main theorem} 
The following sets of ordinals all have (strict) supremum\footnote{I.e., the least strict upper bound.} $\tau$: 
\begin{enumerate-(1)} 
\item 
\label{main theorem 1} 
\begin{enumerate-(a)} 
\item 
\label{main theorem 1a} 
$\Pi_1$-definable ordinals over $L_{\omega_1}$ 
\item 
\label{main theorem 1b} 
$\Sigma_2$-definable ordinals over $L_{\omega_1}$ 
\item 
\label{main theorem 1c} 
Least elements of nonempty 
$\Pi_1$-definable subsets of $\omega_1$ over $H_{\omega_1}$ 
\item 
\label{main theorem 1d} 
As in \ref{main theorem 1c}, but for $\Sigma_2$-definable subsets 
\end{enumerate-(a)} 
\item 
\label{main theorem 3} 
Countable ranks of $\Sigma^1_2$ wellfounded relations 
\item 
\label{main theorem 2} 
Lengths of countable  
\begin{enumerate-(a)} 
\item 
\label{main theorem 2a} 
$\Pi^1_1$ ranks 
\item 
\label{main theorem 2c} 
$\Sigma^1_2$ ranks 
\end{enumerate-(a)} 
\item 
\label{main theorem 22}
Lengths of countable 
\begin{enumerate-(a)} 
\item 
\label{main theorem 22a} 
$\Sigma^1_1$ prewellorders on $\Sigma^1_1$ sets 
\item 
\label{main theorem 22b} 
(strict) $\Pi^1_1$ prewellorders on $\Pi^1_1$ sets\footnote{I.e., this holds for the supremum of lengths of prewellorders and similarly for strict prewellorders.}  
\item 
\label{main theorem 22c} 
strict $\Sigma^1_2$ prewellorders on $\Sigma^1_2$ sets 
\end{enumerate-(a)} 
\item 
\label{main theorem 6} 
$L$-levels of\footnote{Note that the $L$-levels of countable $\Pi^1_2$ sets are not strictly bounded by $\tau$ if $\omega_1^L=\omega_1$ by Proposition \ref{ctble Pi12 cofinal in tau}.} 
\begin{enumerate-(a)} 
\item 
countable 
$\Pi^1_1$ sets\footnote{I.e., ordinals $\alpha$ such that there exists a countable $\Pi^1_1$ set in $L_{\alpha+1}\setminus L_\alpha$.} 
\item 
countable $\Sigma^1_2$ sets
\item 
$\Pi^1_2$ singletons 
\end{enumerate-(a)} 
assuming $\omega_1^L=\omega_1$ 
\end{enumerate-(1)} 
\end{theorem}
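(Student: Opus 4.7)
The plan is to organize the proof around the definability-based quantity in (1b), which is essentially $\tau$ by definition, and to establish a chain of two-sided inequalities to all the other items. The equivalences within part (1) are purely set-theoretic: (1a)$\Leftrightarrow$(1b) by the standard trick that ``$\alpha$ is the least ordinal satisfying a $\Sigma_2$ formula'' can be rewritten as $\Pi_1$ by quantifying over bounded initial segments of $L_{\omega_1}$; (1c)$\Leftrightarrow$(1d) analogously; and (1a)$\Leftrightarrow$(1c) uses that $L_{\omega_1}$ is $\Delta_1$-definable in $H_{\omega_1}$ (modulo $\omega_1^L=\omega_1$ considerations, which only matter for (5)) so that definable ordinals transfer uniformly between the two structures.

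For parts (2), (3), (4) the plan is to bound everything above by $\tau$ via Shoenfield-style absoluteness, and then to match from below by explicit coding. The upper bound proceeds as follows: a countable $\Sigma^1_2$ wellfounded relation $R$ on $\NN$ has a real code, and its rank is absolutely computed inside $L[x]$ for that code; unpacking the $\Sigma^1_2$ definition via the Shoenfield tree shows that ``$\mathrm{rank}(R)=\alpha$'' is a $\Sigma_2$ assertion over $L_{\omega_1}$, placing the rank below $\tau$. Countable $\Pi^1_1$ ranks and countable prewellorders at the $\Sigma^1_1$, $\Pi^1_1$, $\Sigma^1_2$ levels all reduce to this case after passing to their natural rank functions or associated wellfounded relations. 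For the lower bounds, I would use that any $\Sigma_2$-definable ordinal $\alpha<\tau$ over $L_{\omega_1}$ is realized by an explicit object: its defining formula can be converted into a $\Sigma^1_2$ wellfounded relation of rank exactly $\alpha$ by reading off a Skolem-style witness search, and a more careful argument, threading through the Kechris--Marker--Sami analysis of $\Pi^1_1$ sets and $\Pi^1_1$-norms on $L$-levels, yields a countable $\Pi^1_1$ prewellorder of the same length. The $\Sigma^1_1$-on-$\Sigma^1_1$ case (4a) follows because $\Sigma^1_1$ prewellorders on $\Sigma^1_1$ sets can reach any ordinal below $\omega_1^{\mathrm{CK}(x)}$ for some $x$, and the standard boundedness of $\Sigma^1_1$ ranks together with a Kleene--Brouwer coding places their supremum at $\tau$ as well.

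For part (5), the assumption $\omega_1^L=\omega_1$ is used to guarantee that countable projective sets are constructible. The first $L$-level at which a countable $\Pi^1_1$ set appears is determined by the complexity of enumerating its reals, which is $\Sigma_2$ over $L_{\omega_1}$; countable $\Sigma^1_2$ sets and $\Pi^1_2$ singletons admit analogous descriptions via Shoenfield absoluteness. Matching these $L$-levels to $\Sigma_2$-definable ordinals in both directions then reduces (5) to (1b).

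The main obstacle will be the sharp lower-bound direction for (2) and (4b): producing, for each $\alpha<\tau$, a genuine countable $\Pi^1_1$ rank or prewellorder of length \emph{exactly} $\alpha$. Upper bounds via absoluteness are routine, but realising every $\Sigma_2$-definable ordinal of $L_{\omega_1}$ by a $\Pi^1_1$ object requires an explicit projective coding of the relevant fine-structural data of $L$, and one must confirm that the coding preserves the exact ordinal rather than only giving an ordinal in the correct interval. I expect this step to draw on master codes for levels of $L$ and on the characterisation of $\tau$ from Section \ref{subsection Kechris ordinal}, rather than on abstract reflection alone.
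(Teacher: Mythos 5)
Your overall architecture --- upper bounds via absoluteness and definability over $L_{\omega_1}$, lower bounds via explicit coding of $L$-data --- matches the paper's, but there are two concrete gaps and one error. First, the equivalence of \ref{main theorem 1c} with \ref{main theorem 1a}/\ref{main theorem 1b} does not follow from $L_{\omega_1}$ being definable in $H_{\omega_1}$: that only transfers definable ordinals \emph{from} $L_{\omega_1}$ \emph{to} $H_{\omega_1}$. For the converse, a $\Pi_1$ formula over $H_{\omega_1}$ quantifies over all hereditarily countable sets, including non-constructible reals, so the set it defines need not be definable over $L_{\omega_1}$ by any syntactic transfer. The paper replaces $H_{\omega_1}\models\varphi(\alpha)$ by the statement $\Vdash^{L_{\omega_1}}_{\Col(\omega,\alpha)}\varphi(\alpha)$, which is again $\Pi_1$ in $\alpha$ over $L_{\omega_1}$ and equivalent to the original by Shoenfield absoluteness and homogeneity of the collapse; some such collapse-and-reflect device is indispensable here, and it is also what makes your upper bound for \ref{main theorem 3} go through when the countable rank $\alpha$ is uncountable in $L$ (your ``absolutely computed inside $L[x]$'' needs $\alpha$ countable there).

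Second, the lower bounds, which you yourself flag as the main obstacle, are not supplied, and a ``Skolem-style witness search'' does not by itself yield a $\Pi^1_1$ object whose length is cofinal in $\tau$. The paper's construction is: for a $\Pi_1$-definable $\nu$, take the set $A$ of $x\in\WO$ such that $\alpha_x$ is a $\nu$-index (a level where a new $\Sigma_1$ fact in parameters ${\leq}\nu$ becomes true) and $L_{\alpha_x}$ recognises $\nu$ as least with its defining property; this is $\Pi^1_1$ because everything beyond $x\in\WO$ is first-order over $L_{\alpha_x}$, the restriction of the $\WO$-norm to $A$ has length exactly $\sigma_\nu$, and $\tau$ is closed under $\nu\mapsto\sigma_\nu$ (Lemma \ref{Sigma1 elementarity of Ltau}), so these lengths are cofinal in $\tau$. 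All the remaining lower bounds (for \ref{main theorem 3}, \ref{main theorem 2c}, \ref{main theorem 22} and \ref{main theorem 6}) are derived from this one set, e.g.\ $\WO_{{<}\sigma_\nu}=\{x\in\WO\mid\exists y\in A\ x\sqsubseteq y\}$ is the $\Sigma^1_2$ witness. Finally, your account of \ref{main theorem 22a} is wrong in mechanism: the supremum of lengths of countable \emph{strict} $\Sigma^1_1$ prewellorders is $\omega_1^{\ck}$ by the effective Kunen--Martin theorem, so no appeal to $\Sigma^1_1$-boundedness or to $\omega_1^{\mathrm{CK}(x)}$ can reach $\tau$; the non-strict $\Sigma^1_1$ prewellorders attain $\tau$ only because the complement of the $\Pi^1_1$ relation $\sqsubset$ of a $\Pi^1_1$-rank is a $\Sigma^1_1$ prewellorder whose strict part is not $\Sigma^1_1$.
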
 

The equivalences in \ref{main theorem 1} are shown in Section \ref{subsection Kechris ordinal}, \ref{main theorem 3} in Section \ref{section upper bound}, \ref{main theorem 2} in Section \ref{section - the lower bound}, \ref{main theorem 2} and \ref{main theorem 22} in Section \ref{section - beyond tau} and \ref{main theorem 6} in Sections \ref{section robust vs stable} and \ref{subsection countable sets}. 
Note that  by \ref{main theorem 1}\ref{main theorem 1a}, the value of $\tau$ is variable: if $\omega_1^L=\omega_1$ then $\tau<\omega_1^L$, but if $\omega_1^L$ is countable then $\tau>\omega_1^L$. 
The next figure shows the size of $\tau$ in comparison with the relevant ordinals. 
Here $\sigma$ denotes the supremum of $\Sigma_1$-definable ordinals in $L_{\omega_1}$; it equals the supremum $\delta^1_2$ of order types of $\Delta^1_2$-definable wellorders on $\omega$. 

\newcommand{\cb}{}
\newcommand{\cre}{}

\bigskip 
\begin{figure}[H]
\begin{tikzpicture}[scale=0.8]
\small

\node [below] at (0,0) {{\cre $\omega_1 =\omega_1^L$}};

\draw (0,0) -- (0,4.3);

\draw (-0.1,0) -- (0.1,0); 

\draw (-0.1,0.5) -- (0.1,0.5); 
\node [right] at (0,0.5) {$\omega_1^{ck}$};

\draw (-0.1,1) -- (0.1,1); 
\node [right] at (0,1) {$\sigma=\delta^1_2$};

\draw (-0.1,1.5) -- (0.1,1.5); 
\node [right] at (0,1.5) {{\cre $\tau$}};

\draw (-0.1,2) -- (0.1,2); 
\node [right] at (0,2) {{\cre $\omega_1^L = \omega_1$}};

\draw (-0.1,3) -- (0.1,3); 
\node [right] at (0,3) {{\cb$\omega_2^L$}};

\node [below] at (2,0) {{\cre $\omega_1=\omega_2^L$}};

\draw (2,0) -- (2,4.3);

\draw (1.9,0) -- (2.1,0); 

\draw (1.9,0.5) -- (2.1,0.5); 
\node [right] at (2,0.5) {$\omega_1^{ck}$};

\draw (1.9,1) -- (2.1,1); 
\node [right] at (2,1) {$\sigma=\delta^1_2$};

\draw (1.9,2) -- (2.1,2); 
\node [right] at (2,2) {{\cb $\omega_1^L$}};

\draw (1.9,2.5) -- (2.1,2.5); 
\node [right] at (2,2.5) {{\cre $\tau$}};

\draw (1.9,3) -- (2.1,3); 
\node [right] at (2,3) {{\cre $\omega_2^L=\omega_1$}};

\node [below] at (4,0) {{\cre $\omega_1>\omega_2^L$}};

\draw (4,0) -- (4,4.3);

\draw (3.9,0) -- (4.1,0); 

\draw (3.9,0.5) -- (4.1,0.5); 
\node [right] at (4,0.5) {$\omega_1^{ck}$};

\draw (3.9,1) -- (4.1,1); 
\node [right] at (4,1) {$\sigma=\delta^1_2$};

\draw (3.9,2) -- (4.1,2); 
\node [right] at (4,2) {{\cb $\omega_1^L$}};

\draw (3.9,3) -- (4.1,3); 
\node [right] at (4,3) {{\cb$\omega_2^L$}};

\draw (3.9,3.5) -- (4.1,3.5); 
\node [right] at (4,3.5) {{\cre $\tau$}};

\draw (3.9,4) -- (4.1,4); 
\node [right] at (4,4) {\cre $\omega_1$};


\end{tikzpicture}
\caption{Comparison of $\tau$ with $\omega_1^L$ and $\omega_2^L$.}
\end{figure}
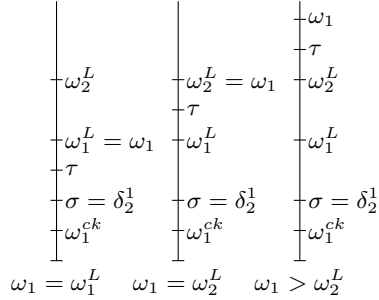 
However, the value of $\tau$ does not change in any generic extension $V[G]$ in comparison with the universe $V$ if $0^\#$ exists.
Then $\omega_1^{V[G]}$ is a Silver indiscernible and therefore $L_{\omega_1^V}$ is an elementary substructure of $ L_{\omega_1^{V[G]}}$.

Several results related to Theorem \ref{main theorem} were known. 
Kechris, Marker and Sami \cite{MR1011178} proved that the supremum in \ref{main theorem 1}\ref{main theorem 1c} equals those of Borel ranks of Borel $\Pi^1_1$ sets and of Borel ranks of equivalence classes of $\Sigma^1_1$ equivalence relations with all classes Borel of bounded rank.\footnote{I.e., the Borel ranks of the equivalence classes have a countable supremum.} 
The equivalence of  \ref{main theorem 1}\ref{main theorem 1a} and  \ref{main theorem 1}\ref{main theorem 1b} was shown in 
\cite[Lemma 4.1]{decisiontimes}. 
It was also known that the supremum of semidecision times of infinite time Turing machines equals the suprema in \ref{main theorem 1} \cite[Theorem 4.5]{decisiontimes}. 
The same result and proof work for ordinal time and tape Turing machines.


In the proofs, we obtain various new examples of sets that admit only ranks of certain minimal countable lengths, and even sets that admit only ranks of a unique countable length. 
Such sets are constructed at the levels of $\Pi^1_1$ and $\Sigma^1_2$. 
As a byproduct of the theorem, the equivalences in \ref{main theorem 1} show that the ordinal $\gamma^1_2$ studied by Kechris equals $\tau$.\footnote{See Section \ref{subsection Kechris ordinal} 
for a definition of $\gamma^1_2$.} 
\ref{main theorem 3} is an effective version of the Kunen-Martin theorem for $\Sigma^1_2$ relations of countable rank. 
Moreover, we immediately obtain the following known results as corollaries. 
Assuming $\omega_1$ is inaccessible in $L$, $\tau$ is a fixed point of the $\aleph$-function in $L$ \cite[Theorem 2.3 \& Corollary 2.4]{MR1011178} since for any ordinal $\alpha$ that is $\Sigma_2$-definable over $L_{\omega_1}$, $\aleph_\alpha$ is $\Sigma_2$-definable over $L_{\omega_1}$ as well. 
Clearly $\cof(\tau)^L=\omega$ \cite[Theorem 2.4]{MR1011178}. 

With the help of Theorem \ref{main theorem}, we shall determine the suprema of various classes of countable prewellorders in Section \ref{section - beyond tau}. 
Figure \ref{figure length of pwos} summarises what we know about these suprema. 
For the rightmost column, we assume that $0^\#$ exists and write $\iota_0$ for the first Silver indiscernible, while 
the remaining claims are proved in $\ZFC$. 
When writing ``(strict) prewellorders'', we mean that the supremum of the lengths of prewellorders and the one for strict ones have the same value. 
Similarly, when we write ``$\Delta^1_1$ pwo's on $2^\omega$/$\Delta^1_1$ sets'' in the topmost row, 
we mean the suprema of lengths of $\Delta^1_1$ pwo's on $2^\omega$ and those on arbitrary $\Delta^1_1$ sets both have supremum $\omega_1^\ck$. 
Note the periodic pattern in the columns of Figure \ref{figure length of pwos}. 

\begin{figure}[H] 
\begin{tabular}{ l  l  l  l } 
 & \hspace{48pt} $\omega_1^\ck$ & \hspace{52pt}  $\tau$ & \hspace{38pt}  ${>}\iota_0$ \\ 
 \hline 
$\Delta^1_1$ &  \emph{(strict)} pwo's on $2^\omega$/$\Delta^1_1$ sets  &   &   \\ 
\hdashline
$\Sigma^1_1$ & \emph{strict} pwo's on $2^\omega$/$\Sigma^1_1$ sets  &  pwo's on $2^\omega$/$\Sigma^1_1$ sets  &  \\ 
\hdashline
$\Pi^1_1$ & pwo's on $2^\omega$ &  pwo's on $\Pi^1_1$ sets   &   \\ 
 & & \emph{strict}  pwo's on $2^\omega$/$\Pi^1_1$ sets    &  \\ 
\hline
$\Delta^1_2$ &   & \emph{(strict)} pwo's on $2^\omega$/$\Delta^1_2$ sets   &   \\ 
\hdashline
$\Sigma^1_2$ & & \emph{strict} pwo's on $2^\omega$/$\Sigma^1_2$ sets  &  pwo's on $2^\omega$/$\Sigma^1_2$ sets \\ 
\hdashline
$\Pi^1_2$ & & pwo's on $2^\omega$ & pwo's on  $\Pi^1_2$ sets  \\ 
 & &  & \emph{strict}  pwo's on  $2^\omega$/$\Pi^1_2$ sets \\ 
\hline 
\end{tabular} 
\medskip 
\caption{Suprema of lengths of countable prewellorders.} 
\label{figure length of pwos} 
\end{figure} 

We investigate the lengths of ranks on countable and co-countable sets in Section \ref{subsection countable sets}. 
On the way, we study a problem that is of interest for its own sake: 
how can one characterise the sets that support countable ranks? 
It is easy to see that a $\Pi^1_1$ sets admits a countable $\Pi^1_1$-rank if and only if it is Borel (see Lemma \ref{Borel Pi11 sets and countable ranks}). 
The next result (see Theorem \ref{characterisation countable ranks}) shows that even very simple $\Sigma^1_2$ Borel sets need not admit countable $\Sigma^1_2$ ranks. 

\begin{theorem} 
If $\Sigma^1_3$ Cohen absoluteness holds,\footnote{I.e. $V\prec_{\Sigma^1_3} V[G]$ for any Cohen generic extension $V[G]$ of $V$.} then the following conditions are equivalent for any countable $\Pi^1_2$ set $A$. 
\begin{enumerate-(a)} 
\item 
$A\in L_\tau$. 
\item 
\label{intro char countable ranks 2} 
$A$ is a subset of some countable $\Sigma^1_2$-set. 
\item 
\label{intro char countable ranks 3} 
$A$ is a subset of some countable $\Delta^1_2$-set. 
\item 
\label{intro char countable ranks 4} 
There exists a countable $\Sigma^1_2$-rank on the complement of $A$. 
\end{enumerate-(a)} 
If $\omega_1$ is inaccessible in $L$, then the implication from \ref{intro char countable ranks 4} to \ref{intro char countable ranks 2} and \ref{intro char countable ranks 3} holds. 
In particular, the complement of the singleton $0^\#$ does not admit a countable $\Sigma^1_2$-rank. 
\end{theorem}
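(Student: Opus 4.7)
The strategy is to close a cycle (a)\,$\Rightarrow$\,(b)\,$\Rightarrow$\,(c)\,$\Rightarrow$\,(a) and to link (d) separately to (a), all leaning on Theorem~\ref{main theorem}, the Mansfield-Solovay theorem, and $\Sigma^1_3$ Cohen absoluteness; the weaker hypothesis and the $0^\#$ consequence would be handled afterwards.

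For (a)\,$\Rightarrow$\,(b), the cofinality of $\Sigma_2$-definable ordinals in $\tau$ provided by Theorem~\ref{main theorem} lets me pick $\beta\in(\alpha,\tau)$ that is $\Sigma_2$-definable over $L_{\omega_1}$, where $A\in L_\alpha$; then $L_\beta\cap 2^\omega$ is a countable $\Sigma^1_2$-set covering $A$. For (b)\,$\Rightarrow$\,(c), Mansfield-Solovay gives $B\subseteq L$ for any countable $\Sigma^1_2$-cover $B$ of $A$, and the least $\gamma$ with $B\subseteq L_\gamma$ is $\Sigma_2$-definable from the defining formula of $B$, hence $<\tau$; the set $L_\gamma\cap 2^\omega$ is then $\Delta^1_2$ because the two-sided $\Sigma_2$-definability of $\gamma$ renders both ``$x\in L_\gamma$'' and ``$x\notin L_\gamma$'' as $\Sigma^1_2$ conditions. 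For (c)\,$\Rightarrow$\,(a), I would use $\Sigma^1_3$ Cohen absoluteness to force the $L$-level of the $\Delta^1_2$-cover $B$ to be pinned down by a $\Sigma_2$-definable ordinal, and then Shoenfield absoluteness of $\Pi^1_2$-formulas to decide $A$-membership of elements of $B$ inside $L_\tau$, yielding $A\in L_\tau$.

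For (a)\,$\Leftrightarrow$\,(d), Theorem~\ref{main theorem} bounds the length of any countable $\Sigma^1_2$-rank on $2^\omega\setminus A$ by $\tau$; the $\Sigma_2$-definable data of such a rank identifies $A$ as the complement of its field and so places $A$ in $L_\tau$. Conversely, $A\in L_\tau$ supplies enough $L_\tau$-data to construct a $\Sigma^1_2$-rank on $2^\omega\setminus A$ whose layers are indexed by a $\Sigma_2$-definable sequence of length below $\tau$ (for instance, ranking $x\notin A$ by the least $\beta<\tau$ at which a $\Sigma_2$-description separating $x$ from $A$ becomes available in $L_\beta$). Under the weaker hypothesis that $\omega_1$ is inaccessible in $L$, the implications (d)\,$\Rightarrow$\,(b),(c) still go through by the same arguments, since countability of the rank still bounds its length via Theorem~\ref{main theorem} and Mansfield-Solovay keeps $A$ inside $L_{\omega_1^L}$. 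Finally, since $0^\#\notin L$, Mansfield-Solovay rules out any countable $\Sigma^1_2$-set containing $\{0^\#\}$, so by the (d)\,$\Rightarrow$\,(b) direction no countable $\Sigma^1_2$-rank exists on $2^\omega\setminus\{0^\#\}$.

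The main obstacle is the direction (c)\,$\Rightarrow$\,(a): turning the mere existence of a $\Delta^1_2$-cover into an actual $\Sigma_2$-bound on $A$'s $L$-level requires a careful use of $\Sigma^1_3$ Cohen absoluteness, essentially arguing that if $B$'s $L$-level were cofinal in $\tau$ in $V$, then adding a Cohen real would produce a $\Sigma^1_3$ witness incompatible with the $\Delta^1_2$-invariance of $B$. A secondary difficulty is constructing, in the (a)\,$\Rightarrow$\,(d) direction, a $\Sigma^1_2$-rank whose length is genuinely below $\tau$, which asks for a $\Sigma_2$-definable stratification of $2^\omega\setminus A$ extracted from the $L_\tau$-code of $A$.
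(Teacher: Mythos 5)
There are genuine gaps at the two places where the real work of this theorem lives. First, your passages from $A\in L_\tau$ to a countable $\Sigma^1_2$ (resp.\ $\Delta^1_2$) cover rest on the claim that $L_\beta\cap 2^\omega$ is $\Sigma^1_2$ (resp.\ $\Delta^1_2$) when $\beta$ is $\Sigma_2$-definable over $L_{\omega_1}$. That complexity count is off: ``$x\in L_\beta$'' then requires verifying a $\Pi_1$ fact about $L_{\omega_1}$ identifying $\beta$, which is $\Sigma_2$ over $H_{\omega_1}$, i.e.\ roughly $\Sigma^1_3$, not $\Sigma^1_2$. The paper gets around this by \emph{not} insisting that the level identifying $\beta$ be correct: it puts $x$ into the cover $B$ whenever \emph{some} countable $L_\delta$ believes the defining $\Pi_1$-formula picks out a unique ordinal $\gamma'$ with $x\in L_{\gamma'}$ and $L_\delta\models\varphi(x)$. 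This existential over countable levels is genuinely $\Sigma^1_2$, and the incorrect witnesses only contribute a bounded (hence countable) amount of junk. For the $\Delta^1_2$ cover one then simply applies the $\Sigma^1_2$ reduction property to $A\subseteq B$; no ``two-sided definability'' of an $L$-level is needed, and your (c)$\Rightarrow$(a) direction in fact needs no Cohen absoluteness at all --- Mansfield--Solovay plus the $\Sigma_2$-reflection defining $\tau$ suffice.

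Second, and more seriously, the implication from ``the complement of $A$ carries a countable $\Sigma^1_2$-rank'' to ``$A\in L_\tau$'' is the heart of the theorem, and your one-line argument (``the $\Sigma_2$-definable data of such a rank identifies $A$ \dots and so places $A$ in $L_\tau$'') does not engage with it: a bound on the \emph{length} of the rank says nothing about where the set $A$ lives, since the rank relations are $\Sigma^1_2$ sets of reals of $V$, not constructible objects. The paper's Lemma on Cohen-stable ranks first shows $A\subseteq L$ by taking \emph{mutually} $\Col(\omega,\gamma)$-generic filters $G,H$, using the rank (via a cofinal sequence in an inner model) to write $A$ as $\Sigma^1_2$ in a real of $L[G]$ and of $L[H]$, and applying Mansfield--Solovay to conclude $A\subseteq L[G]\cap L[H]=L$; only then does a careful $\Sigma_2$-reflection (phrased through the forcing relation so that the statement really is $\Sigma_2$ over $L_{\omega_1}$) push $A$ below $\tau$. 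This is where $\Sigma^1_3$ Cohen absoluteness (to make the rank survive into $V[G,H]$) or, alternatively, inaccessibility of $\omega_1$ in $L$ (to find the generics over $L$ inside $V$) is actually used. Your converse construction of a rank from $A\in L_\tau$ also cannot work as stated, since elements of $2^\omega\setminus A$ need not be constructible and so cannot be ranked by $L$-levels; the correct route is to take a countable $\Delta^1_2$ cover $B\supseteq A$, put any $\Sigma^1_2$-rank on the countable set $B\setminus A$, and adjoin $2^\omega\setminus B$ as a single class at the bottom.
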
 

Furthermore, any $\Pi^1_1$ Borel set has a \borel code in $L$ by Proposition \ref{sup of Borel ranks of Pi11 sets}. 
(\borel codes are Borel codes on countable ordinals; in $V$ these notions are equivalent.)
This suggests to study $\Sigma^1_2$ sets Borel sets with \borel codes in $L$. 
We do this in Section \ref{section Sigma12 Borel sets}. 
This continues the study of $\Pi^1_1$ Borel sets begun by Kechris, Marker and Sami \cite{MR1011178}. 
It leads us to the open problem whether all absolutely $\Delta^1_2$ Borel sets have \borel codes in $L$. 
A positive level-by-level solution of this problem, analogous to Louveau's separation theorem, would strengthen several classical theorems of descriptive set theory such as Shoenfield absoluteness and the Mansfield-Solovay theorem as well as a recent result of Kanovei and Lyubetsky \cite{kanovei2019borel}. 
We prove a new partial result towards this problem in Theorem \ref{proper forcing no new sets}: 

\begin{theorem} 
Proper forcing does not add new absolutely $\Delta^1_2$ Borel sets. 
\end{theorem}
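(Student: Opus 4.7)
The goal is to show that any absolutely $\Delta^1_2$ Borel set $A \in V[G]$, where $G$ is $V$-generic for a proper forcing $\PP$, already has a Borel code in $V$. Write $\phi \in \Sigma^1_2$ and $\psi \in \Pi^1_2$ for absolutely equivalent formulas defining $A$, with real parameter $p \in V$. The strategy is to locate a code in $V$ by reflection through a countable elementary submodel, exploiting that under the $\Delta^1_2$ hypothesis the assertion ``$c$ codes $A$'' can be rewritten as a $\Pi^1_2(p,c)$ statement.

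The first step is the effective reformulation. For any Borel code $c$, the assertion ``$c$ codes the set defined by $\phi$'' splits, using $\phi \leftrightarrow \psi$ absolutely, into
\[
\forall x\,(\phi(x,p) \to B^*(c,x))\ \wedge\ \forall x\,(B^*(c,x) \to \psi(x,p)),
\]
where $B^*$ is the $\Delta^1_1$ Borel-membership predicate. Each conjunct is $\Pi^1_2(p,c)$, so the whole statement is $\Pi^1_2(p,c)$. This is the essential use of the $\Delta^1_2$ hypothesis: without $\psi$, the reverse inclusion $\forall x\,(B^*(c,x) \to \phi(x,p))$ would only be $\Pi^1_3$, beyond the reach of Shoenfield-type absoluteness.

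The second step uses properness to produce a code inside $V$. Some $p_0 \in G$ forces the $\Sigma^1_3$ statement ``a Borel code for the $\phi$-set exists''. Fix a large $\theta$ and a countable elementary submodel $M \prec H_\theta^V$ containing $\PP$, $p$, and $p_0$. Properness supplies an $(M,\PP)$-generic condition $q \leq p_0$, and standard diagonalization over the countably many dense subsets of $\PP$ in $M$ yields an $M$-generic filter $G' \ni q$ lying in $V$. Let $\pi : M \to \bar M$ be the Mostowski collapse and set $\bar G' := \pi[G']$. The countable transitive model $\bar M[\bar G'] \subseteq V$ satisfies, by elementarity and the forcing theorem, ``a Borel code for the $\phi$-set exists'', yielding a witness $\bar c \in V$.

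The third step verifies that $\bar c$ codes $A$. The $\Pi^1_2(p,\bar c)$ statement from the first step holds in $\bar M[\bar G']$; translating through the Shoenfield tree, it corresponds to wellfoundedness of an associated tree, and wellfoundedness is absolute between transitive models containing the tree. Hence the statement holds in $V$ and in $V[G]$, so $\bar c$ is a genuine Borel code whose Borel set in $V[G]$ coincides with $A$. The main obstacle is this third step: direct $\Pi^1_2$ reflection from a countable $\bar M[\bar G']$ upward to $V$ does not follow from standard absoluteness, and requires careful use of the tree representation together with properness' preservation of $\omega_1$, which ensures that $\bar c$ has the same countable Borel rank when interpreted in $V$ as in $\bar M[\bar G']$.
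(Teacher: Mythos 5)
Your steps 1 and 2 are fine, but step 3 is a genuine gap, and the patch you gesture at does not close it. The statement ``$\bar c$ codes $A$'' is indeed $\Pi^1_2(p,\bar c)$, but $\Pi^1_2$ statements are only \emph{downward} absolute to transitive models; they do not transfer upward from a countable transitive model such as $\bar{M}[\bar{G}']$ to $V$. (Compare: ``every real is constructible'' is $\Pi^1_2$ and holds in every countable transitive model of $V=L$.) Preservation of $\omega_1$ by proper forcing is irrelevant here, because $\bar{M}[\bar{G}']$ is countable in $V$ no matter what: its version of the Shoenfield tree lives on a countable ordinal, and wellfoundedness of \emph{that} tree says nothing about the tree computed in $V$, which quantifies over all reals of $V$. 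A symptom of the problem is that your argument never actually uses properness: once you build $G'$ by diagonalizing against the countably many dense sets in $M$, the $(M,\PP)$-generic condition $q$ plays no role, so your argument would ``prove'' the theorem for arbitrary forcing, which is false (the paper's subsequent proposition shows that collapses only yield Borel${}^{({<}\omega_1)}$-codes, not Borel${}^{(\omega)}$-codes).

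The paper circumvents the upward-$\Pi^1_2$ obstruction by never transferring ``$c$ codes $A$'' out of a countable model. Instead it works with the relation $\sigma^h\sim\sigma^k$ (``the two codes define the same Borel set''), which is $\Pi^1_1$ and hence absolute between all transitive models by Mostowski. The absolute $\Delta^1_2$ hypothesis is used only in the genuine extension $V[G\times H]$ by the \emph{product} $\PP\times\PP$, to show that the trivial condition forces $\sigma_0\sim\sigma_1$ for the two mutually generic evaluations of the name. This is a first-order fact about the forcing relation, so it reflects into a countable $N\prec H_\theta$ and its collapse; absoluteness of $\sim$ then shows any two $N$-generic filters give equivalent codes. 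Properness enters exactly here: a master condition guarantees that the restriction $H\cap N$ of the true generic is $N$-generic, so $\sigma^H\sim\sigma^g$ for an $N$-generic filter $g$ living in the ground model, whose evaluation is the desired ground-model code. You need some substitute for this mutual-genericity mechanism (or an equivalent device) before your step 3 can be completed.
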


In particular, in any extension of $L$ by proper forcing, any absolutely $\Delta^1_2$ Borel set has a Borel code in $L$.

\section{Preliminaries}

\subsection{Notation} 

We write $\forall \alpha, \beta\dots$ and $\exists \alpha, \beta,\dots$ for quantifiers ranging over ordinals. 
We say that a set $x$ is \emph{$\Phi$-definable} with respect to a class  $\Phi$ of formulas if there is a formula $\varphi(u)\in \Phi$ such that $x$ is the unique set with $\varphi(x)$. 
We write $M\prec_n N$ if $M$ is a $\Sigma_n$-elementary substructure of $N$. 
Let $p\colon \Ord\times \Ord\rightarrow \Ord$ denote a standard pairing function.\footnote{See \cite[Chapter 3]{Je03}.}  
Let $\WO$ denote the set of wellorders on $\omega$, i.e. the set of $p[x]$ where $x$ is a wellorder on $\omega$. 
Moreover, let $\WO_\alpha$ denote the set of $x\in\WO$ with order type $\alpha$ and $\WO_{{\leq}\alpha}$ the set of those with order type at most $\alpha$. 
Let $\alpha_x=\otp(x)$ denote the order type of any $x\in \WO$. 
Let $x_\alpha$ denote the $<_L$-least real coding an ordinal $\alpha<\omega_1^L$. 
Let $\alpha^\oplus$ denote the least admissible ordinal strictly above $\alpha$. 
The \emph{$L$-rank} of a set $x\in L$ is the least $\alpha$ with $x\in L_{\alpha+1}$. 
Call an ordinal $\beta$ an \emph{$\alpha$-index} if $\beta>\alpha$ and some $\Sigma_1^{L_{\omega_1}}$ fact with parameters ${\leq}\alpha$ first becomes true in $L_\beta$. 
An \emph{index} is an $\emptyset$-index.\footnote{Note that the notion \emph{index} is sometimes used in a different sense for those $L$-levels where a new real appears.}

\subsection{Borel sets and codes} 

For any topological space $X$, the Borel sets are generated from the open sets by forming complements, countable unions and intersections. 
For any subset $A$ of $X$, the Borel subsets of $A$ are precisely the sets of the form $A\cap B$, where $B$ is a Borel subset of $X$. 
Suppose that $\Gamma$ is a class of of subsets of $X$. 
Sets of the form $A\cap B$ with $B\in \Gamma$ are called \emph{relative $\Gamma$} subsets of $A$. 

A \emph{\borelomega-code} for a Borel subset of $2^\omega$ is a subset of $\omega$ that codes a Borel set as in \cite[Section 25]{Je03}. 
Suppose that $\alpha$ is a multiplicatively closed ordinal. 
Then $\alpha$ is closed under the pairing function $p\colon \Ord \times \Ord \rightarrow \Ord$. 
A \emph{\borelalpha-code} for a subset of $2^\omega$ is defined as in \cite[Section 25]{Je03} by replacing $\omega$ with $\alpha$. 
If $\gamma$ is a limit of multiplicatively closed ordinals, then a \emph{\borellessgamma-code} is by definition a \borelalpha-code for some multiplicatively closed $\alpha<\gamma$. 
A \emph{\borelinfty-code} is a \borelalpha-code for some ordinal $\alpha$. 
We similarly define codes for ${\bf \Sigma}^0_\alpha$ and ${\bf \Pi}^0_\alpha$ sets. 
Using \borelgamma- and \borellessgamma-codes, we can define \emph{${\bf \Sigma}_\alpha^{(\gamma)}$-} and \emph{${\bf \Sigma}_\alpha^{({<}\gamma)}$-codes} for multiplicatively closed ordinals $\gamma$. 
For each notion of code, we define a type of set, for instance a set defined by a ${\bf \Sigma}_\alpha^{(\gamma)}$-code is called a \emph{${\bf \Sigma}_\alpha^{(\gamma)}$-set}.

\borel-codes are relevant in Section \ref{section Sigma12 Borel sets}. 
They are equivalent to \borelomega-codes in the sense that they code the same sets, but a set may have a \borel-code in $L$ while it does not have a \borelomega-code in $L$.

\subsection{$\Sigma_2$-admissible sets}\footnote{A formula is $\Sigma_n$ if it is built from a $\Sigma_0$-formula, i.e. one with only bounded quantifiers, by adding $n$ alternating blocks of quantifiers of the form $\exists x_0 \dots \exists x_k$ and $\forall x_0 \dots \forall x_k$ in front.} 
An \emph{admissible set} is a transitive model of Kripke-Platek set theory {\sf KP} with infinity. 
An admissible set is called \emph{$\Sigma_2$-admissible} if it is a model of $\Sigma_2$-collection. 
It is easy to show that any $\Sigma_2$-admissible set satisfies $\Sigma_2$-replacement and $\Delta_2$-separation.\footnote{Some other facts about $\Sigma_n$-admissible sets can be found in \cite[Section 1]{kranakis1982reflection}.}

\begin{lemma} 
\label{substructures of Sigma2-admissibles} 
Suppose that $M$ is transitive, $N$ is $\Sigma_2$-admissible and $M\prec_2 N$. 
Then $M$ is $\Sigma_2$-admissible. 
\end{lemma}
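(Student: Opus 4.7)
The plan is to verify each axiom of Kripke--Platek set theory together with $\Sigma_2$-collection in $M$, using $\Sigma_2$-elementarity as the transfer tool.

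First, I would handle the basic axioms of $\KP$. Extensionality and foundation hold in any transitive set. The axioms of pair, union, infinity and $\Delta_0$-separation can each be written as a $\Pi_2$-statement (or simpler) with parameters from $M$, so by $M \prec_2 N$ and their validity in $N$, they descend to $M$. The same remark applies to $\Sigma_0$-collection, which together with the above yields admissibility of $M$: for a fixed instance with $\Sigma_0$-matrix $\varphi$, parameters $\vec p \in M$ and $a \in M$, the hypothesis $\forall x \in a\,\exists y\,\varphi(x,y,\vec p)$ is $\Pi_2$ and the conclusion $\exists b\,\forall x \in a\,\exists y \in b\,\varphi(x,y,\vec p)$ is $\Sigma_1$, so the implication transfers from $N$ to $M$ (using $\Pi_2$- and $\Sigma_1$-elementarity, both implied by $M \prec_2 N$).

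The main step is $\Sigma_2$-collection. Fix a $\Sigma_2$-formula $\varphi(x,y,\vec p) \equiv \exists z\,\forall u\,\theta(x,y,z,u,\vec p)$ with $\theta \in \Sigma_0$, parameters $\vec p \in M$, and $a \in M$ such that $M \models \forall x \in a\,\exists y\,\varphi(x,y,\vec p)$. Since $\exists y\,\varphi$ is $\Sigma_2$ and $M \prec_2 N$, for each $x \in a$ we have $N \models \exists y\,\varphi(x,y,\vec p)$, so $N \models \forall x \in a\,\exists y\,\varphi(x,y,\vec p)$. Now I would apply $\Sigma_2$-collection in $N$ (together with $\Sigma_1$-collection in $N$ to absorb the inner unbounded $\exists z$) to rephrase the desired conclusion $\exists b\,\forall x \in a\,\exists y \in b\,\varphi(x,y,\vec p)$, inside $N$, as the equivalent statement
\[
\exists b\,\exists c\,\forall x \in a\,\exists y \in b\,\exists z \in c\,\forall u\,\theta(x,y,z,u,\vec p),
\]
whose matrix, after absorbing the three bounded quantifiers, is $\Pi_1$; so the whole statement is genuinely $\Sigma_2$. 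It holds in $N$ by $\Sigma_2$-admissibility, and by $M \prec_2 N$ it therefore holds in $M$. Choosing witnesses $b, c \in M$, any such $b$ also witnesses the original collection instance in $M$: for each $x \in a$ there is $y \in b$ and $z \in c$ with $\forall u\,\theta(x,y,z,u,\vec p)$ true in $M$, so $z$ witnesses $\exists z\,\forall u\,\theta$, i.e.\ $\varphi(x,y,\vec p)$.

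The main obstacle is the apparent $\Pi_3$-complexity of the $\Sigma_2$-collection schema, which prevents a naive direct transfer under mere $\Sigma_2$-elementarity. The trick is that within the $\Sigma_2$-admissible $N$ the conclusion of a given collection instance can itself be rewritten as a $\Sigma_2$-statement by further bounding the inner existentials, and only then transferred down to $M$. Once $\Sigma_2$-collection is established in $M$, the consequences $\Sigma_2$-replacement and $\Delta_2$-separation recorded in the preliminaries follow in $M$ by the standard arguments.
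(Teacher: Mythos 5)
Your proof is correct, but it runs in the opposite direction from the paper's. The paper first reduces $\Sigma_2$-collection to $\Pi_1$-collection (merging the two existential witnesses $y$ and $z$ into one witness for a $\Pi_1$ matrix) and then argues by contradiction: the \emph{failure} of a $\Pi_1$-collection instance for a total relation $R$ on $A$, namely $\forall B\,\exists a\in A\,\forall b\in B\,\neg(a,b)\in R$, is $\Pi_2$ over the admissible $M$, so it transfers up to $N$, where it contradicts $N$'s $\Sigma_2$-admissibility (totality of $R$ on $A$ being $\Sigma_2$ instance by instance, it also transfers up). You instead argue directly: transfer the hypothesis up to $N$, apply collection in $N$ to bound \emph{both} witnesses, observe that the resulting statement $\exists b\,\exists c\,\forall x\in a\,\exists y\in b\,\exists z\in c\,\forall u\,\theta$ is genuinely $\Sigma_2$, and transfer it back down. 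Both arguments rest on the same two facts -- $M\prec_2 N$ gives $\Pi_2$- as well as $\Sigma_2$-absoluteness, and bounded quantifiers can be absorbed into $\Sigma_1$/$\Pi_1$ uniformly over admissible sets -- but the paper's preliminary reduction to $\Pi_1$-collection keeps only one unbounded witness and so needs a single application of collection in $N$, whereas your version needs two; in exchange, yours is constructive and makes explicit exactly which instance is used where. Two small points to tighten: the schema needed to bound the inner $\exists z$ is $\Pi_1$-collection, not $\Sigma_1$-collection, since the matrix $\forall u\,\theta$ is $\Pi_1$ (harmless, as $N$ is $\Sigma_2$-admissible and $\Pi_1\subseteq\Sigma_2$); and you should say explicitly that the $\Pi_1$ normal form of the matrix $\forall x\in a\,\exists y\in b\,\exists z\in c\,\forall u\,\theta$ is \emph{uniform} across admissible sets, so that the $\Sigma_2$ sentence you transfer down to $M$ still expresses the intended bounded-quantifier statement there -- the paper flags exactly this uniformity in its own argument.
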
 
\begin{proof} 
It is easy to check that $M$ is admissible. 
Note that $\Pi_1$-collection implies $\Sigma_2$-collection. 
Towards a contradiction, suppose this fails in $M$. 
Then there exists a set $A\in M$ and a $\Pi_1$-definable relation $R\subseteq A\times M$ over $M$ that is total on $A$ with 
$$ M \models \forall B\ \exists a\in A\ \forall b\in B\ \neg (a,b)\in R.$$ 
$\Sigma_1$-definable sets are closed under bounded quantification over all admissible sets and the translation is uniform. 
Therefore, the previous statement is $\Pi_2$ over $M$. 
Since $M\prec_2 N$, this holds in $N$. 
Since $M$ is transitive, $N$ believes that $R$ is a total relation on $A$. 
But this contradicts the assumption that $N$ is $\Sigma_2$-admissible. 
\end{proof}

\subsection{Infinite time computation} 
\label{section - infinite time} 

We shall motivate the definition of ranks by a natural example: ranks induced by infinite time Turing machines (ittm's). 
We first give a brief sketch of these machines (for more details see \cite{hamkins2000infinite}.) 
An ittm-program is a Turing program with states $0, \dots, n$ for some natural $n$.  
The hardware of an ittm consists of an input, work and output tape, each of length $\omega$. 
Each cell contains $0$ or $1$. 
While an ittm works like a Turing machine at successor stages, at limit stages the head is set to the leftmost cell, the state to the limit inferior of the earlier states,\footnote{The definition of the limit state is different from \cite{hamkins2000infinite}, but the models are computationally equivalent.} 
and the contents of each cell to the inferior limit of the earlier contents of this cell. 
An ordinal Turing machine (otm) works similarly with an ordinal length tape \cite{koepke2005}. 
A set $A$ of reals is called \emph{ittm-semidecidable} if there is an ittm-program $p$ such that the computation $p(x)$ with input $x$ halts if and only if $x\in A$. 
$A$ is called \emph{ittm-decidable} if the set and its complements are both ittm-semidecidable. 
The definitions for otm's are analogous. 


Next is an important structural property of the class of semidecidable sets with respect to various machine models such as ittm's. 
In fact, the existence of any rank suffices to prove this property. 
For a set $A$ semi-decided by an ittm algorithm $p$, the rank of a real $x$ is defined as the halting time of $p(x)$. 

\begin{remark} 
\label{reduction for ittms} 
The \emph{reduction property} of a class $\Gamma$ of subsets of $2^\omega$ states that the union of two sets $A$ and $B$ in $\Gamma$ can be partitioned into two disjoint sets $A'\subseteq A$ and $B'\subseteq B$ in $\Gamma$. 
To show this for the class $\Gamma$ of ittm-semidecidable sets, take programs $p$ and $q$ that semidecide $A$ and $B$, respectively, 
and run them synchronously with input $x$. 
Let $x\in A'$ if $p(x)$ halts and $q(x)$ halts at the same or some later time, or $p(x)$ halts and $q(x)$ diverges. 
Similarly, let $x\in B'$ if $q(x)$ halts and $p(x)$ halts at some later time or diverges.\footnote{For a set-up closer to the general argument for ranks below, take a single program $r$ that semidecides the disjoint union $(0^\smallfrown A) \cup (1^\smallfrown B)$ and let $p(x)=r(0^\smallfrown x)$ and $q(1)=r(1^\smallfrown x)$ above.} 
\end{remark} 



\subsection{Ranks} 
\label{section ranks} 
The definition of ranks generalises properties of the class of sets semidecidable by infinite computations. 
Recall that a \emph{prewellorder} $\leq$ on a set $A$ is a wellfounded linear quasiorder on $A$. 
We write $<$ for its strict part.\footnote{Recall that a \emph{quasiorder} is a reflexive transitive relation. 
The \emph{strict part} $<$ of a linear quasiorder $\leq$ on a set $A$ is defined by $x<y:\Leftrightarrow  y\not\leq x$ for $x,y\in A$. 
The \emph{non-strict part} $\leq$ of a strict linear quasiorder $<$ on a set $A$ is defined by $x\leq y:\Leftrightarrow y\not< x$ for $x,y\in A$. 
The notions are dual: every strict linear quasiorder is the strict part of a linear quasiorder, and every linear quasiorder is the non-strict part of a strict linear quasiorder. 
They induce an equivalence relation on the underlying set $A$ defined by $x\equiv y :\Leftrightarrow x\leq y \wedge y \leq x$ and a (strict) linear order on the quotient.}  
A \emph{strict prewellorder} is a wellfounded strict linear quasiorder on $A$. 
A \emph{rank function} for a strict prewellorder $<$ on a set $A$ is a function $f$ from $A$ to the ordinals with $x<y \Leftrightarrow f(x)<f(y)$.   

$\Gamma$ always denotes a collection of subsets of $2^\omega$ that contains all basic open sets and is closed under finite products, finite unions and computable preimages. 
Let $\bar{A}$ denote the complement of a set $A$ and $\bar{\Gamma}=\{ \bar{A}\mid A\in \Gamma\}$. 
A set $A$ is called \emph{$\Gamma$-complete} if it is in $\Gamma$ and every $\Gamma$ set is a preimage of $A$ under a computable function. 
We call a set a \emph{true} $\Gamma$ set if it is in $\Gamma$, but not a continuous preimage of a set in $\bar{\Gamma}$.\footnote{Note that every true $\Gamma$ set is $\Gamma$-complete with respect to continuous functions, if we assume determinacy for Boolean combinations of sets in $\Gamma$, since then Wadge's lemma holds for $\Gamma$.}  
For instance, any $\Gamma$-complete set $A$ is a true $\Gamma$ set. 
To see this, let $\vec{f}=\langle f_x \mid x\in 2^\omega \rangle$ list all continuous functions $f\colon 2^\omega \rightarrow 2^\omega$ in a computable way.\footnote{I.e., the preimages under $f_x$ of basic open set are given by a computable function in $x$.}  
If $A$ were not a true $\Gamma$ set, then $\{x\in 2^\omega \mid  x\notin f_x^{-1}(A) \}=g^{-1}(A)$ for some continuous function $g$, since the function sending $(x,y)$ to $f_x(y)$ is computable. 
We obtain a contradiction for any $x$ with $f_x=g$. 

In the next definition we write $A_y=\{ x \mid (x,y)\in A\}$ for the section of a subset $A$ of $(2^\omega)^2$ at $y\in 2^\omega$. 

\begin{definition} 
\label{definition rank}\footnote{While the definition of ranks arises naturally from computations, one can also argue that it is optimal from the viewpoint of descriptive set theory. 
Overspill is essential, since otherwise virtually any class $\Gamma$ would have the rank property as witnessed by the trivial prewellorder with a single class. 
Moreover, the complexity of the relations $\sqsubseteq$ and $\sqsubset$ is optimal, since they cannot be chosen to Borel for any $\Pi^1_1$ rank on a complete $\Pi^1_1$ set $A$. 
To see this, suppose otherwise and take a continuous reduction of $\WO$ to $A$. 
The image of each $\WO_\alpha$ is bounded in the rank on $A$ by the boundedness lemma \cite[4C.11]{Mo09}. 
However, every bounded initial segment of $A$ with respect to the rank has Borel rank below a fixed countable ordinal. 
So the same holds for the sets $\WO_{{\leq}\alpha}$. 
But $\WO_{\leq\alpha}$ is not $\Pi^0_{2\cdot\beta}$ if $\alpha=\omega^\beta$ (see \cite{sternborelrank} and \cite[Lemma 1.3]{MR1011178}).}
A \emph{$\Gamma$-rank} on a set $A\in \Gamma$ is a prewellorder $\leq$ on $A$ with strict part $<$ such that there exist $\Gamma$  relations $\sqsubseteq$ and $\sqsubset$ with the following properties: 
\begin{enumerate-(1)} 
\item 
\emph{(Left agreement)}\footnote{The relations $\sqsubseteq$ and $\sqsubset$ can be chosen to be transitive by Remark \ref{canonical choice for ranks}. If the rank has limit length, one can then replace left agreement by the equivalent simpler condition that when restricted to $A$, $\sqsubseteq$ equals $\leq$ and $\sqsubset$ equals $<$. 
Although all interesting ranks have limit length, it is useful to allow the case of successor length for more generality.} 
For all $x\in A$: 
\begin{enumerate-(a)} 
\item 
$\sqsubseteq_x$ equals $\leq_x$. 
\item 
$\sqsubset_x$ equals $<_x$. 
\end{enumerate-(a)} 
\item 
\emph{(Overspill)} 
$x\sqsubseteq y$ and $x\sqsubset y$ for all $(x,y)\in A \times (2^\omega \setminus A)$.
\end{enumerate-(1)} 
A fixed strict order preserving function from $(A,<)$ to the ordinals (a \emph{rank function}) is regarded as part of the rank, but 
we omit it when it is not relevant.\footnote{Even for natural ranks defined by infinite computations, the range may fail to be an ordinal.} 
The order type of $\leq$ is called the rank's \emph{length} and is written as $|{\leq}|$. 
Finally, $\Gamma$ has the \emph{rank property} by definition if every set in $\Gamma$ admits a $\Gamma$-rank. 
\end{definition} 

For example, we mentioned the $\Pi^1_1$ set $\WO$ of wellorders on $\omega$ with the Borel layers $\WO_\alpha$, where $\WO_\alpha$ consists of all wellorders of order type $\alpha$. 
It is easy to see that the relations $\leq$ and $<$ on $\WO$ given by the order types form a $\Pi^1_1$ rank. 
Since $\WO$ is $\Pi^1_1$-complete, it follows that $\Pi^1_1$ has the rank property \cite[Theorem 4B.2]{Mo09}. 
Using projections, it can then be easily shown that $\Sigma^1_2$ has the rank property as well \cite[Theorem 4B.3]{Mo09}.  
%

\begin{remark} 
\label{canonical choice for ranks} 
For any $\Gamma$-rank given by $\leq$, there is a canonical choice of relations $\sqsubseteq$ and $\sqsubset$. 
The next diagram defines $\sqsubseteq$ in cases depending on membership in $A$ of $x$ and $y$. 
We then define $\sqsubset$ similarly by replacing $\leq$ with $<$. 

\begin{figure}[H] 
\begin{tabular}{ l | l | l | } 
 & $y\in A$ & $y\notin A$ \\ 
 \hline 
$x\in A$ & $x\leq y$ & {\tt true} \\ 
\hline 
$x\notin A$ & {\tt false} & {\tt false} \\ 
\hline 
\end{tabular} 
\end{figure}
If $\unlhd$ and $\lhd$ are arbitrary witnesses for the $\Gamma$-rank, then 
$x \sqsubseteq y \Leftrightarrow x\in A\wedge x\unlhd y$ and $x \sqsubset y \Leftrightarrow x\in A\wedge x\lhd y$. 
Therefore $\sqsubseteq$ and $\sqsubset$ are in $\Gamma$ and thus these relations are the required witnesses for the $\Gamma$-rank. 

It follows that the relations $\sqsubseteq$ and $\sqsubset$ can be chosen transitive and such that $\sqsubset$ is a subset of $\sqsubseteq$. 
Moreover, the prewellorder extending $\leq$ by adding the complement of $A$ a single equivalence class on top is in  $\bar{\Gamma}$; this prewellorder is the complement of $\sqsubset$. 
\end{remark} 

\begin{remark} \ 
\begin{enumerate-(1)} 
\item 
Every class $\Gamma$ with the rank property satisfies the reduction property. 
To see this, suppose that $A$ and $B$ are sets in $\Gamma$ and argue as in Remark \ref{reduction for ittms} using a $\Gamma$-rank on the disjoint union of $A$ and $B$. 
\item 
Any $\Gamma$-rank on a true $\Gamma$ set $A$ has limit length. 
If it had successor length, let $y$ be of maximal rank in $A$. 
Then $A=\{x\mid x\sqsubseteq y\}$ is a continuous preimage of a set in $\Gamma$, contradicting the assumption. 
\end{enumerate-(1)} 
\end{remark}




\section{Robust ordinals}

In proof theory, ordinals $\alpha$ that are $\Sigma_n$-stable in $\beta$, i.e. with $L_\alpha\prec_n L_\beta$, are studied \cite[Section 5]{rathjen2017higher}. 
We work with a variant of this definition where elementarity is relaxed. 

\subsection{Kechris' ordinal}
\label{subsection Kechris ordinal} 

By a \emph{formula in $\alpha$}, we mean a formula with parameters ${\leq}\alpha$. 
Suppose that $M\subseteq N$ are transitive sets. 
We write $\vec{x}$ for finite tuples of variables. 
We say that $(M,N)$ satisfies \emph{Tarski's test} for $\Sigma_{n+1}$-formulas in $\alpha$ if for every formula $\exists \vec{x}\ \varphi(\vec{x},y)$ in $\alpha$ true in $N$, where $\varphi$ is $\Pi^1_n$ and $y\in M$, 
there is some $\vec{x}\in M$ such that $\varphi(\vec{x},y)$ holds in $N$. 

\begin{definition} 
\label{definition of tau} 
Suppose that $\alpha$ is a countable ordinal with $\alpha\geq\omega$ and $n\geq1$. 
We call an ordinal $\gamma$ 
\emph{$\Sigma_n^{\alpha}$-robust} if $(L_\gamma,L_{\omega_1})$\footnote{As mentioned above, $\omega_1$ always denotes $\omega_1^L$.}  satisfies Tarki's test for $\Sigma_n$-formulas in $\alpha$. 
Let $\sigma_n^{\alpha}$ denote the least $\Sigma_n^{\alpha}$-robust ordinal. 
\end{definition} 

We shall omit $\alpha$ when $\alpha=\omega$. 
Following the notation of \cite{decisiontimes}, we shall write $\sigma=\sigma_1$ for the first $\Sigma_1$-robust ordinal and $\tau=\sigma_2$ for the first $\Sigma_2$-robust ordinal. 
We further write $\sigma^\alpha=\sigma_1^\alpha$ and $\tau^\alpha=\sigma_2^\alpha$.  

It is easy to see that any set $x\in L_{\omega_1}$ that is $\Sigma_{n}$-definable over $L_{\omega_1}$ in $\alpha$ is an element of $L_{\sigma_n^{\alpha}}$.  
The converse is not necessarily true.\footnote{
However, it is true for $n=1$. 
To see this, it suffices to check that every element of $L_\gamma$ is $\Sigma_1$-definable in $\alpha$ over $L_{\omega_1}$, where $\gamma>\alpha$ is least with $L_\gamma\prec_1 L_{\omega_1}$. 
So let $M$ denote the set of elements of $L_{\omega_1}$ that are $\Sigma_1$-definable in $\alpha$. 
Clearly $M\prec_1 L_{\omega_1}$. 
If the claim fails, then the collapsing map $\pi\colon M\rightarrow \bar{M}$ moves some $x\in M$. 
We can take $x$ with minimal $L$-rank. 
Then $x$ is definable by a $\Sigma_1$-formula $\varphi(x)$ in $\alpha$ over $L_{\omega_1}$. 
Since $\pi$ fixes all ordinals ${\leq}\alpha$ pointwise and $\bar{M}$ is transitive, $\varphi(\pi(x))$ holds in $L_{\omega_1}$ by $\Sigma_1$ upwards absoluteness. 
But $\pi(x)\neq x$ contradicts the uniqueness of $x$.}
For instance, if $\alpha\geq \omega$ is a countable ordinal (in $V$) with $\alpha^{+L}<\omega_1$, then not every ordinal below $\tau^\alpha$ is $\Sigma_2$-definable in $\alpha$ over $L_{\omega_1}$, since there are only $|\alpha|^L$ many such ordinals while $\tau^{\alpha}>\alpha^{+L}$. 

However, we claim that $\sigma_n^{\alpha}$ equals the \emph{supremum} of those ordinals which are $\Sigma_n$-definable in $\alpha$ over $L_{\omega_1}$. 
The same holds for ordinals $\Pi_{n-1}$-definable in $\alpha$ over $L_{\omega_1}$ for all $n\geq 2$. 
Note that the suprema for $\Sigma_n$ and $\Pi_{n-1}$ are equal by \cite[Lemma 4.1]{decisiontimes} for $n=2$. 
An analogous proof works for all $n\geq 2$. 
Returning to the claim, we want to see that the ordinals $\Sigma_n$-definable over $L_{\omega_1}$ in $\alpha$ are unbounded in $\sigma^\alpha_n$. 
Suppose that $\psi(y)=\exists \vec{x}\ \varphi(\vec{x},y)$ is a $\Sigma_{n}$-formula with parameters ${\leq}\alpha$ that is true in $L_{\omega_1}$, where $\varphi(\vec{x},y)$ is a $\Pi_{n-1}$-formula. 
We can assume $\alpha=0$, since the general case is similar. 
It suffices to show that $\psi(y)$ has a $\Sigma_{n}$-definable witness. 
To see this, note that the $L$-least solution $(\vec{x},y)$ of $\varphi$ is $\Sigma_{n}$-definable. 
It is the unique $(\vec{x},y)$ such that $\varphi(\vec{x},y)$ holds and for some $L_\alpha$ containing $(\vec{x},y)$, all other tuples in $L_\alpha$ strictly $L$-below $(\vec{x},y)$ fail to satisfy $\varphi$ in $L_\alpha$. 
Thus $y$ is a $\Sigma_{n}$-definable solution of $\psi$. 

We now relate $\tau$ with the ordinal $\gamma^1_2$.\footnote{According to \cite{MR1011178}, the ordinals $\gamma^1_n$ were first studied by Kechris.} 
The latter plays a role in 
the analysis of Borel $\Pi^1_1$ sets in \cite{MR1011178}. 
Let 
$$\gamma^1_2:=\sup\{\min(A)\mid \emptyset\neq A\subseteq \omega_1,\ \otp^{-1}(A)\in \Pi^1_2 \}$$ 
$$\gamma:=\sup\{\min(A)\mid \emptyset\neq A\subseteq \omega_1,\ A\in \Pi_1^{H_{\omega_1}}\}$$ 

\begin{proposition} 
$\gamma^1_2=\gamma=\tau$. 
\end{proposition}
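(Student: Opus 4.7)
The plan is to establish the chain $\tau\leq\gamma\leq\gamma^1_2\leq\tau$, using the characterisations of $\tau$ developed earlier in the subsection together with Shoenfield's tree analysis for $\Pi^1_2$.

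The inclusion $\gamma\leq\gamma^1_2$ is immediate: every nonempty $\Pi_1^{H_{\omega_1}}$-definable $A\subseteq\omega_1$ satisfies $\otp^{-1}(A)\in\Pi^1_1\subseteq\Pi^1_2$, since $\Pi_1^{H_{\omega_1}}$ on reals coincides with $\Pi^1_1$.

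For $\tau\leq\gamma$, I would use the equivalence recalled just before the proposition, which identifies $\tau$ with the supremum of $\Pi_1$-definable ordinals over $L_{\omega_1}$. Given such an ordinal $\alpha$ defined by a $\Pi_1$-formula $\varphi$, downward absoluteness of $\Pi_1$-formulas through the $L$-hierarchy yields $L_{\omega_1}\models\varphi(u)\Leftrightarrow\forall\beta<\omega_1\ L_\beta\models\varphi(u)$, which is a $\Pi_1$-condition over $H_{\omega_1}$ since $\beta\mapsto L_\beta$ is uniformly $\Delta_1$ and satisfaction in a set is $\Delta_1$. Hence $\{\alpha\}$ is a nonempty $\Pi_1^{H_{\omega_1}}$-definable subset of $\omega_1$ with minimum $\alpha$, giving $\alpha\leq\gamma$ and thus $\tau\leq\gamma$.

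The main inequality $\gamma^1_2\leq\tau$ is established as follows. Let $A\subseteq\omega_1$ be nonempty with $\otp^{-1}(A)$ defined by a lightface $\Pi^1_2$-formula $\pi$, and set $\mu=\min(A)$. I invoke the Shoenfield tree $T$ for $\pi$: $T$ is a uniformly $\Delta_1$-definable class in $L$ satisfying $\pi(z)\Leftrightarrow T_z$ wellfounded for every real $z$. Since $\RR^{L_{\omega_1}}=\RR^L$, Shoenfield absoluteness yields that for $z\in L$, $\pi(z)$ is equivalent to the $\Pi_1^{L_{\omega_1}}$-assertion that $T_z$ has no branch in $L_{\omega_1}$ (any such $L$-branch is a countable sequence of ordinals and therefore already belongs to $L_{\omega_1}$). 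Consequently the set $A^L:=\{\alpha<\omega_1^L\mid \exists z\in L\cap\WO_\alpha\ (T_z \text{ wellfounded})\}$ is $\Sigma_2^{L_{\omega_1}}$-definable, so when $A^L\neq\emptyset$, applying the $\Sigma_2$-robustness of $\tau$ to the $\Sigma_2$-statement ``$\exists\alpha\in A^L$'' produces a witness in $L_\tau$, whence $\mu=\min(A^L)<\tau$. In the residual case $A^L=\emptyset$ we have $\mu\geq\omega_1^L$, which forces $\omega_1^L<\omega_1$ and hence $\omega_1^L<\tau$ by the position of $\tau$ noted after the main theorem; I would complete this case by showing that $\mu$ is still $\Sigma_2$-definable over $L_{\omega_1}$ from parameters bounded in $L_\tau$, combining the $\Sigma_2$-definability of $\omega_1^L$ with a parameterised Shoenfield analysis applied in $L[z]$ for a suitable non-constructible witness $z$.

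The main obstacle is precisely this case $A\cap\omega_1^L=\emptyset$: witnesses to $\pi$ are then necessarily non-constructible, so the Shoenfield tree analysis cannot be carried out purely over $L_{\omega_1}$, and one must transfer it into $L[z]$ while keeping the resulting definability bounded in $L_\tau$ so that $\Sigma_2$-robustness still yields $\mu<\tau$.
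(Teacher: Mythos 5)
The decisive inequality $\gamma^1_2\leq\tau$ is not actually proved in your proposal, and the missing case is the heart of the matter, not a residual technicality. Your Shoenfield-tree argument over $L_{\omega_1}$ only sees ordinals in $A$ that have \emph{constructible} codes, i.e.\ it bounds $\min(A\cap\omega_1^L)$; when $\omega_1^L<\omega_1$ the set $A$ may well be disjoint from $\omega_1^L$, and you concede this case yourself. The repair you sketch does not work as stated: the robustness property defining $\tau$ (Tarski's test for $\Sigma_2$-formulas) only applies to formulas with parameters ${\leq}\omega$, so showing that $\mu$ is ``$\Sigma_2$-definable over $L_{\omega_1}$ from parameters bounded in $L_\tau$'' does not by itself yield $\mu<\tau$ -- one would need $\tau^\beta=\tau$ for the parameters $\beta$ used, which is not established -- and a ``parameterised Shoenfield analysis in $L[z]$ for a non-constructible witness $z$'' cannot be expressed over $L_{\omega_1}$ at all, which is precisely the obstacle. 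The paper avoids the case split entirely by a forcing reflection: for a $\Pi_1$-formula $\varphi$ over $H_{\omega_1}$ one has $H_{\omega_1}\models\varphi(\alpha)$ iff $\Vdash^{L_{\omega_1}}_{\Col(\omega,\alpha)}\varphi(\alpha)$, proved by Shoenfield absoluteness through $V$, $V[G]$, $L[G]$, $L_{\omega_1}[G]$ for $G$ collapsing $\alpha$. Since forcing a $\Pi_1$-statement is again $\Pi_1$, the assertion ``some countable $\alpha$ satisfies $\varphi$'' becomes a parameter-free $\Sigma_2$-statement about $L_{\omega_1}$, valid whether or not $\alpha$ has constructible codes, and robustness of $\tau$ then gives a witness $\alpha<\tau$. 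Some version of this collapse trick (or an equivalent device) is what your argument is missing.

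Two smaller points. First, your justification of $\gamma\leq\gamma^1_2$ rests on the false identification of $\Pi_1^{H_{\omega_1}}$ on reals with $\Pi^1_1$; the correct correspondence is with $\Pi^1_2$ (this is exactly what the paper uses), and with that correction the direction goes through, since $\otp^{-1}(A)$ is then $\Pi^1_2$ rather than $\Pi^1_1$. Second, your argument for $\tau\leq\gamma$ -- reflecting a $\Pi_1$-definition over $L_{\omega_1}$ down the $L$-hierarchy to get a $\Pi_1^{H_{\omega_1}}$ definition of the singleton $\{\alpha\}$, and invoking the cited equality of the suprema of $\Pi_1$- and $\Sigma_2$-definable ordinals -- is fine and is a legitimate, slightly different packaging of the paper's argument (the paper instead rewrites a $\Sigma_2$-definition as a $\Pi_1$-condition on pairs via ``every transitive model of $V=L$ of height $\beta$ contains a witness'' and the pairing function). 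But with the main inequality unproved, the proposal as a whole has a genuine gap.
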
 
\begin{proof} 
Recall that $\Sigma_1$ and $\Pi_1$-formulas are absolute between $H_{\omega_1}$ and $V$. 
Moreover, any $\Pi^1_2$-definable set of reals is $\Pi_1$-definable and conversely. 

\begin{claim*} 
$\gamma^1_2=\gamma$. 
\end{claim*} 
\begin{proof} 
$\gamma^1_2\leq\gamma$: 
Suppose that $A$ is a nonempty set of countable ordinals such that $\pi^{-1}(A)$ is $\Pi^1_2$-definable. 
We want to show that $\min(A)\leq \gamma$. 
We have that $\pi^{-1}(A)$ is defined by some $\Pi_1$-formula $\varphi(x)$. 
$$A=\{\alpha\mid \forall x\ [(\omega,x)\cong(\alpha,<) \rightarrow \varphi(x)] \}$$ 
is a $\Pi_1$-definition of $A$, since the existence of isomorphisms is expressible by a $\Sigma^1_1$-formula. 
By the definition of $\gamma$, we have $\min(A)\leq\gamma$. 

$\gamma\leq\gamma^1_2$: 
Suppose that $A$ is a subset of $\omega_1$ that is $\Pi_1$-definable over $H_{\omega_1}$ by some formula $\varphi(x)$. 

$$\pi^{-1}(A)=\{x\in \WO \mid \forall \alpha\ [(\omega,x)\cong(\alpha,<) \rightarrow \varphi(\alpha)] \}$$ 
is a $\Pi_1$-definition of $\pi^{-1}(A)$. 
Hence $\pi^{-1}(A)$ is $\Pi^1_2$-definable. 
By the definition of $\gamma^1_2$, we have $\min(A)\leq\gamma^1_2$. 
\end{proof} 

In order to reflect a $\Pi_1$-formula over $H_{\omega_1}$ below $\tau$, we will rewrite it as a $\Pi_1$-formula over $L_{\omega_1}$. 
For any $\Pi_1$-formula $\varphi(\alpha)$, all transitive models $M\subseteq N$ of $\ZFC$ of height at least $\omega_1$ and any ordinal $\alpha$ that is countable in $M$, we have $M\models \varphi(\alpha) \Longleftrightarrow N \models \varphi(\alpha)$ by Shoenfield absoluteness. 

\begin{claim*} 
For any $\Pi_1$-formula $\varphi(x)$ and any countable ordinal $\alpha$, $H_{\omega_1} \models \varphi(\alpha) \Longleftrightarrow \Vdash^{L_{\omega_1}}_{\Col(\omega,\alpha)} \varphi(\alpha)$.\footnote{$\Col(\omega,\alpha)$ denotes the standard collapse to make $\alpha$ countable.}  
\end{claim*} 
\begin{proof} 
Since $\Col(\omega,\alpha)$ is homogeneous, it suffices to prove absoluteness of $\varphi(\alpha)$ between $H_{\omega_1}$ and any $\Col(\omega,\alpha)$-generic extension $L_{\omega_1}[G]$ of $L_{\omega_1}$. 
Since $\Col(\omega,\alpha)$ is homogeneous, we can pick any such filter. 
We thus assume that $G$ is $\Col(\omega,\alpha)$-generic over $V$. 
Since $\alpha$ is countable in both $V$ and $L[G]$, we have 
$$ H_{\omega_1} \models \varphi(\alpha) \Longleftrightarrow V\models \varphi(\alpha) \Longleftrightarrow  V[G]\models \varphi(\alpha) \Longleftrightarrow  L[G]\models \varphi(\alpha) \Longleftrightarrow L_{\omega_1}[G] \models \varphi(\alpha)$$ 
by Shoenfield absoluteness. 
%
\end{proof} 

Suppose that $\alpha$ is a countable ordinal. 
For any $\Sigma_1$-formula, the formula $\Vdash^{L_{\omega_1}}_{\Col(\omega,\alpha)} \varphi(\alpha)$ is a $\Sigma_1$-formula as well. 
Similarly, for any $\Pi_1$-formula, the formula $\Vdash^{L_{\omega_1}}_{\Col(\omega,\alpha)} \varphi(\alpha)$ is a $\Pi_1$-formula. 

\begin{claim*} 
$\gamma=\tau$. 
\end{claim*} 
\begin{proof} 
$\gamma\leq\tau$: 
Suppose that $A$ is a subset of $\omega_1$ that is $\Pi_1$-definable over $H_{\omega_1}$ by some formula $\varphi(x)$. 
We want to show $\min(A)\leq \tau$. 
By the previous claim, the formula $\exists\alpha\ \Vdash^{L_{\omega_1}}_{\Col(\omega,\alpha)} \varphi(\alpha)$ holds in $L_{\omega_1}$. 
By Tarki's test for $\Sigma_2$-formulas, there is some $\alpha<\tau$ such that  $\Vdash^{L_{\omega_1}}_{\Col(\omega,\alpha)} \varphi(\alpha)$ holds in $L_{\omega_1}$. 
By the previous claim, $\varphi(\alpha)$ holds in $H_{\omega_1}$ and thus $\alpha\in A$. 
Since $\alpha<\tau$, $\min(A)<\tau$. 

$\tau\leq\gamma$: 
Suppose that $\alpha$ is $\Sigma_2$-definable over $L_{\omega_1}$. 
We want to show $\alpha\leq\gamma$. 
Take a formula $\exists \vec{x}\ \psi(\vec{x},y)$ defining $\alpha$, where $\psi(\vec{x},y)$ is a $\Pi_1$-formula. 
We can rewrite $\exists \vec{x}\ \psi(\vec{x},y)$ as the formula $\exists \beta \ \varphi(\beta,y)$, where $\varphi(\beta,y)$ states that for every transitive model $M$ of $V=L$ of height $\beta$, there is some $\vec{x}\in M$ such that $\psi(\vec{x},y)$ holds. 
$\varphi$ is a $\Pi_1$-formula. 
Note that the pairing function $p\colon \Ord\times\Ord \rightarrow \Ord$ is $\Delta_1$-definable and satisfies $p(\epsilon,\eta) \geq \epsilon,\eta$ for all $\epsilon,\eta$. 
The set $A=\{p(\epsilon,\eta) \mid \varphi(\epsilon,\eta)\}$ is $\Pi_1$-definable over $H_{\omega_1}$. 
Since $\eta=\alpha$ for all $\epsilon,\eta$ with $p(\epsilon,\eta) \in A$, we have $p(\epsilon,\eta) \geq\eta=\alpha$. 
Hence $\alpha\leq\min(A)\leq\gamma$. 
\end{proof} 
The previous claims yield $\gamma^1_2=\gamma=\tau$ as required. 
\end{proof} 

For this first part of Theorem \ref{main theorem}, it remains to show that $\tau$ equals the supremum of least elements of nonempty $\Sigma_2$-definable subsets of $\omega_1$ over $L_{\omega_1}$. 
To see this, suppose that $\exists x\ \varphi(\alpha,x)$ defines a nonempty set $A$ of ordinals over $L_{\omega_1}$, where $\varphi$ is a $\Pi_1$-formula. 
Then 
$$ B:= \{ p(\alpha,\beta)\mid \exists x\in L_\beta\ \varphi(\alpha,x) \}  $$ 
is a nonempty set $\Pi_1$-definable over $H_{\omega_1}$ with $\min(A)\leq \min(B)$.

\subsection{Robust versus stable ordinals}
\label{section robust vs stable} 
While it is clear that $L_{\sigma^\alpha}\prec_1 L_{\omega_1}$, a similar fact can be easily seen for $\tau^\alpha$ as well: 

\begin{lemma} 
\label{Sigma1 elementarity of Ltau} 
$\tau^\alpha$ is closed under function that sends $\beta$ to $\sigma^\beta$; 
in particular, $L_{\tau^\alpha}\prec_1 L_{\omega_1}$. 
\end{lemma}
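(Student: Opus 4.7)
The plan is to prove both assertions in one stroke by showing that for every $\beta<\tau^\alpha$, the $\Sigma_1$-Skolem hull $M_\beta$ of $\beta+1$ in $L_{\omega_1}$ is itself an element of $L_{\tau^\alpha}$. Closure of $\tau^\alpha$ under $\beta\mapsto\sigma^\beta$ is then immediate, and the $\Sigma_1$-elementarity will drop out at the end as a short corollary.

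First I would identify $M_\beta$ with $L_{\sigma^\beta}$. Since $L_{\omega_1}$ carries a $\Sigma_1$-definable Skolem function, $M_\beta\prec_1 L_{\omega_1}$. A standard transitivity argument applies: for $x\in M_\beta$ and $n<\omega$, the $L$-least element of $x$ not already listed is $\Sigma_1$-definable from $x$, so closure under Skolem forces every member of $x$ into $M_\beta$. Because $L_{\omega_1}\models V=L$, an analogous Skolem argument shows $M_\beta\models V=L$, so $M_\beta=L_\delta$ for $\delta=M_\beta\cap\Ord$; since $\beta+1\subseteq M_\beta$ we have $\delta>\beta$, and minimality in both directions (the hull is the smallest $\Sigma_1$-elementary set containing $\beta+1$, while $L_{\sigma^\beta}$ is such a set) forces $\delta=\sigma^\beta$.

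Next I would show the assignment $\beta\mapsto M_\beta$ is uniformly $\Sigma_1$-definable in $\beta$ over $L_{\omega_1}$. Using that $L_{\omega_1}\models\mathsf{ZFC}^-$, and hence admits a $\Sigma_1$-Skolem function $f$ together with $\Sigma_1$-recursion on $\omega$, one can code the iterative recipe $M_0=\tc(\beta+1)$, $M_{n+1}=M_n\cup f[\omega\times M_n^{<\omega}]$, $M_\beta=\bigcup_n M_n$ into a $\Sigma_1$-formula $\psi(x,\beta)$ satisfied uniquely by $x=M_\beta$. Now apply Tarski's test for $\Sigma_2$-formulas in $\alpha$ at $\tau^\alpha$: for $\beta<\tau^\alpha$ the parameter $\beta$ lies in $L_{\tau^\alpha}$, and the $\Sigma_2$-statement $\exists x\,\psi(x,\beta)$ is true in $L_{\omega_1}$, so it is witnessed inside $L_{\tau^\alpha}$. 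Uniqueness forces this witness to be $L_{\sigma^\beta}$, hence $L_{\sigma^\beta}\in L_{\tau^\alpha}$ and therefore $\sigma^\beta<\tau^\alpha$.

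For the ``in particular'' clause $L_{\tau^\alpha}\prec_1 L_{\omega_1}$, I would argue directly: any $y\in L_{\tau^\alpha}$ lies in some $L_\beta$ with $\beta<\tau^\alpha$, hence by the transitivity of $L_{\sigma^\beta}=M_\beta$ (which contains $\beta$ and therefore $L_\beta$) we have $y\in L_{\sigma^\beta}\subseteq L_{\tau^\alpha}$; since $L_{\sigma^\beta}\prec_1 L_{\omega_1}$, every $\Sigma_1$-statement in $y$ true in $L_{\omega_1}$ has a witness in $L_{\sigma^\beta}$, and thus in $L_{\tau^\alpha}$.

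The main obstacle in this plan is verifying that the Skolem hull construction is genuinely uniformly $\Sigma_1$-definable in $\beta$: one must check that $L_{\omega_1}$ has enough admissibility for the sequence $(M_n)_{n<\omega}$ to be recovered as a $\Sigma_1$-object, rather than requiring external parameters. Once the formula $\psi(x,\beta)$ has the correct complexity, the entire lemma reduces to a single application of Tarski's test at $\tau^\alpha$ followed by the uniqueness of the Skolem hull.
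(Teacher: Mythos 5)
Your overall strategy is sound and it does differ from the paper's. The paper first reduces, via monotonicity of $\beta\mapsto\sigma^\beta$, to the case where $\beta$ is $\Pi_1$-definable over $L_{\omega_1}$ in $\alpha$ (such $\beta$ are cofinal in $\tau^\alpha$), and then observes that ``$\gamma=\sigma^\beta$'' is $\Pi_1$ in $\beta,\gamma$, so that $\sigma^\beta$ is outright $\Sigma_2$-definable in $\alpha$ and hence below $\tau^\alpha$. You instead feed $\beta$ itself into Tarski's test as the set parameter $y\in L_{\tau^\alpha}$, which handles all $\beta<\tau^\alpha$ at once and avoids the cofinality reduction; the identification of the $\Sigma_1$-hull of $\beta+1$ with $L_{\sigma^\beta}$ that you use is exactly the content of the paper's footnote on $\Sigma_1$-definable elements. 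Your derivation of the ``in particular'' clause as a union of a chain of $\Sigma_1$-elementary levels is also fine.

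The one genuine error is the claim you yourself flag as the main obstacle: the assignment $\beta\mapsto M_\beta$ is \emph{not} uniformly $\Sigma_1$-definable, and cannot be. If ``$x=M_\beta$'' were $\Sigma_1$ in $\beta$, then taking $\beta=0$ would make $L_\sigma$, and hence $\sigma=L_\sigma\cap\Ord$, $\Sigma_1$-definable over $L_{\omega_1}$ without parameters; but $\sigma$ is the \emph{strict} supremum of the $\Sigma_1$-definable ordinals (the collection of such ordinals has no maximum, as $\delta+1$ is $\Sigma_1$-definable whenever $\delta$ is), a contradiction. Fortunately this costs you nothing: ``every true $\Sigma_1$ fact with parameters ${\leq}\beta$ holds in $L_\gamma$'' is $\Pi_1$ in $\beta,\gamma$, and ``no $\gamma'<\gamma$ has this property'' is expressible by a bounded quantifier over a $\Delta_1$ matrix, so ``$x=L_{\sigma^\beta}$'' is of the form $\exists\gamma\,(\Pi_1)$ and the existential closure $\exists x\,\psi(x,\beta)$ is honestly $\Sigma_2$ --- which is precisely the complexity at which Tarski's test applies at $\tau^\alpha$. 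Replace ``$\Sigma_1$-definable'' by ``$\Pi_1$ in the witnesses, hence $\Sigma_2$ after existential closure'' and your argument goes through verbatim.
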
 
\begin{proof} 
Since $\sigma_\beta\leq \sigma_\gamma$ holds for all $\beta\leq\gamma$, we can assume that $\beta$ is $\Pi_1$-definable over $L_{\omega_1}$. 
Note that $\sigma_\beta$ is $\Pi_1$-definable from $\beta$. 
It follows that $\sigma_\beta$ is $\Sigma_2$-definable in $\alpha$, so we obtain $\sigma_\beta<\tau^\alpha$, as required. 
\end{proof} 

Note that the previous lemma implies that $L_\tau$ is closed under $\Sigma_2$-recursion in $L_{\omega_1}$ (with respect to a total recursion rule) along ordinals. 
To see this, we argue that for each $\alpha<\tau$, the recursion yields a function $f_\alpha \colon \alpha \rightarrow L_\tau$ in $L_\tau$. 
It suffices to show this for unboundedly many $\alpha<\tau$, so we assume that $\alpha$ is $\Pi_1$-definable over $L_{\omega_1}$. 
The existence of a function $f\colon \alpha \rightarrow V$ satisfying the recursion is $\Sigma_2$ over $L_{\omega_1}$ and hence the definition of $\tau$ yields such a function in $L_\tau$. 
Using Lemma \ref{Sigma1 elementarity of Ltau}, it follows that the function $f_\tau\colon \tau \rightarrow L_\tau$ defined by the recursion is $\Sigma_2$-definable over $L_\tau$. 

However, $L_{\tau^\alpha}\prec_2 L_{\omega_1}$ is not true in general by the next result. 
The lemma also analyses the 
least ordinal $\hat{\tau}^\alpha$ such that $L_{\hat{\tau}^\alpha}$ and $L_{\omega_1}$ satisfy the same $\Sigma_2$ sentences in $\alpha$. 
In general, $\hat{\tau}^\alpha\leq\tau^\alpha$ since every $\Sigma_2$-statement in $\alpha$ that holds in $L_{\omega_1}$ also holds in $L_{\tau^\alpha}$. 
In detail, suppose that $\theta=\exists \vec{x}\ \psi(\vec{x})$ holds in $L_{\omega_1}$, where $\psi$ is a $\Pi_1$-formula in $\alpha$. 
There is some $\vec{x}\in L_{\tau^\alpha}$ such that $\psi(\vec{x})$ holds in $L_{\omega_1}$ by the definition of $\tau^\alpha$. 
Since $\psi$ is $\Pi_1$, $\psi(\vec{x})$ also holds in $L_{\tau^\alpha}$ by Lemma \ref{Sigma1 elementarity of Ltau}, so $\theta$ holds in $L_{\tau^\alpha}$, as required. 

\begin{lemma} 
\label{when the least Sigma2-stable is Sigma2-elementary}  
The following conditions are equivalent for any infinite countable ordinal $\alpha$: 
\begin{enumerate-(a)} 
\item 
\label{Sigma2-stable is elementary 1}  
$\alpha^{+L}=\omega_1$
\item 
\label{Sigma2-stable is elementary 5} 
Every element of $L_{\tau^\alpha}$ is $\Sigma_2$-definable in $\alpha$ over $L_{\omega_1}$. 
\item 
\label{Sigma2-stable is elementary 4}  
$L_{\tau^\alpha}\prec_2 L_{\omega_1}$
\item 
\label{Sigma2-stable is elementary 6}  
$L_{\tau^\alpha}$ is $\Sigma_2$-admissible. 
\item 
\label{Sigma2-stable is elementary 2}  
$\hat{\tau}^\alpha=\tau^\alpha$
\item 
\label{Sigma2-stable is elementary 3} 
$L_{\hat{\tau}^\alpha}\prec_1 L_{\omega_1}$ 
\end{enumerate-(a)} 
\end{lemma}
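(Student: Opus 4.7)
The plan is to establish the six-fold equivalence via the cyclic chain (a) $\Rightarrow$ (b) $\Rightarrow$ (c) $\Rightarrow$ (d) $\Rightarrow$ (a), together with a separate chain linking (e) and (f) to (c). The main technical tools are the $\Sigma_2$-Skolem-hull/transitive-collapse construction, the $\Sigma_1$-elementarity $L_{\tau^\alpha}\prec_1 L_{\omega_1}$ provided by Lemma \ref{Sigma1 elementarity of Ltau}, Lemma \ref{substructures of Sigma2-admissibles} for transferring $\Sigma_2$-admissibility to $\Sigma_2$-elementary substructures, and the counting observation that the set $D$ of ordinals $\Sigma_2$-definable in $\alpha$ over $L_{\omega_1}$ has $L$-cardinality at most $|\alpha|^L$ while still satisfying $\sup D=\tau^\alpha$.

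The heart of the argument is (a) $\Leftrightarrow$ (b). Given $\beta<\tau^\alpha$, pick some $\gamma\in D$ with $\gamma>\beta$. Under (a) the ordinal $\gamma<\omega_1=\alpha^{+L}$ admits an $L$-injection into $\alpha$, and the $<_L$-least such $f_\gamma$ is $\Sigma_1$-definable from $\gamma$; substituting the $\Sigma_2$-definition of $\gamma$ yields a $\Sigma_2$-definition of $f_\gamma$ in $\alpha$, whence $\beta=f_\gamma^{-1}(\delta)$ with $\delta=f_\gamma(\beta)<\alpha$ is itself $\Sigma_2$-definable in $\alpha$ with parameter $\delta$; the standard Skolem-term presentation of elements of $L_{\tau^\alpha}$ extends this from ordinals to all of $L_{\tau^\alpha}$. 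Conversely, if (a) fails then the remark following Definition \ref{definition of tau} gives $\tau^\alpha>\alpha^{+L}$ while $\otp D\leq|D|^L\leq|\alpha|^L<\alpha^{+L}$, so $D\neq\tau^\alpha$ and (b) fails. The step (b) $\Rightarrow$ (c) is the $<_L$-least-witness trick: for $\exists\vec x\,\psi(\vec x,\vec y)$ $\Sigma_2$ with $\vec y\in L_{\tau^\alpha}$ true in $L_{\omega_1}$, the $<_L$-least witness is $\Sigma_2$-definable from $\vec y$, hence (by (b)) from $\alpha$, hence lies in $L_{\tau^\alpha}$, after which Lemma \ref{Sigma1 elementarity of Ltau} transfers the $\Pi_1$-matrix. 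Finally, (c) $\Rightarrow$ (d) is immediate from Lemma \ref{substructures of Sigma2-admissibles}.

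The main obstacle is (d) $\Rightarrow$ (a). Arguing contrapositively, assume (a) fails and form the $\Sigma_2$-Skolem hull $H$ of $\alpha$ inside $L_{\tau^\alpha}$, using the $\Sigma_2$-Skolem function available from $\Sigma_2$-admissibility; in $L$ this hull has cardinality $|\alpha|^L$, so its transitive collapse is some $L_\nu$ with $\nu<\alpha^{+L}<\tau^\alpha$. Combining the $\Sigma_2$-elementarity $H\prec_2 L_{\tau^\alpha}$ with $L_{\tau^\alpha}\prec_1 L_{\omega_1}$, the plan is to show that for every $\Sigma_2$-formula in parameters ${\leq}\alpha$ (which are fixed pointwise by the collapse) a witness already exists in $L_\nu$ whose $\Pi_1$-matrix is satisfied in $L_{\omega_1}$, yielding that $\nu$ is $\Sigma_2^\alpha$-robust and contradicting the minimality of $\tau^\alpha$. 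The delicate point is the $\Pi_1$-transfer: a $\Pi_1$-fact satisfied in $L_\nu$ at a parameter beyond $\alpha$ must be transported up to $L_{\tau^\alpha}$ via the inverse collapse $\sigma$ and then across $L_{\tau^\alpha}\prec_1 L_{\omega_1}$; it is the fixedness of parameters ${\leq}\alpha$ by $\sigma$ that makes this chain of transfers go through.

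For the remaining equivalences, (e) $\Rightarrow$ (f) is immediate from Lemma \ref{Sigma1 elementarity of Ltau}. The direction (f) $\Rightarrow$ (e) combines the $\Pi_1$-absoluteness supplied by (f) with the matching $\Sigma_2$-theory in $\alpha$ built into the definition of $\hat\tau^\alpha$: any true $\Sigma_2$-sentence in $\alpha$ yields a witness in $L_{\hat\tau^\alpha}$ by matching theory, and $\Pi_1$-elementarity transfers its $\Pi_1$-matrix back to $L_{\omega_1}$, so $\hat\tau^\alpha$ is $\Sigma_2^\alpha$-robust and hence equals $\tau^\alpha$. To link (e) with the main chain, (c) $\Rightarrow$ (e) uses (b) to produce for each $\beta<\tau^\alpha$ a $\Sigma_2$-formula $\varphi_\beta$ defining $\beta$ together with the $\Sigma_2$-sentence $\exists x\,\varphi_\beta(x)$ (suitably strengthened to force the witness to have $L$-rank $\geq\beta$) which distinguishes $L_\beta$ from $L_{\omega_1}$; and (e) $\Rightarrow$ (c) follows via (e) $\Rightarrow$ (f) together with the same parameter-extension used in (b) $\Rightarrow$ (c).
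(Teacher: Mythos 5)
Your chain (a) $\Rightarrow$ (b) $\Rightarrow$ (c) $\Rightarrow$ (d) and the surrounding bookkeeping essentially match the paper, but the step (d) $\Rightarrow$ (a) has a genuine gap. You form the $\Sigma_2$-hull $H$ of $\alpha$ in $L_{\tau^\alpha}$, collapse it to some $L_\nu$ with $\nu<\alpha^{+L}<\tau^\alpha$, and claim that $\nu$ is $\Sigma_2^\alpha$-robust. But robustness of $\nu$ is a statement about the pair $(L_\nu,L_{\omega_1})$: for \emph{every} parameter $y\in L_\nu$ and every $\Pi_1$ matrix $\varphi$ in parameters ${\leq}\alpha$, a witness $\vec x\in L_\nu$ must make $\varphi(\vec x,y)$ true \emph{in $L_{\omega_1}$}. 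The hull-and-collapse construction cannot deliver this: the inverse collapse $\sigma$ fixes only the ordinals ${\leq}\alpha$ (and sets whose transitive closure lies in $H$), so a given $y\in L_\nu$ is in general a different set from $\sigma(y)\in H$, and there is no reason why $\exists\vec x\,\varphi(\vec x,y)$ and $\exists\vec x\,\varphi(\vec x,\sigma(y))$ should have the same truth value in $L_{\omega_1}$. Running the transfer in the other direction is no better: from a witness $\vec x'\in H$ with $L_{\omega_1}\models\varphi(\vec x',y')$ for $y'\in H$, collapsing gives $L_\nu\models\varphi(\pi(\vec x'),\pi(y'))$, and a $\Pi_1$ fact holding in the transitive set $L_\nu$ does not propagate up to $L_{\omega_1}$ (that would require exactly the $\Sigma_1$-elementarity of $L_\nu$ in $L_{\omega_1}$ that you are trying to establish). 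What your argument actually shows is that the non-transitive structure $H$ satisfies a Tarski-type test relative to $L_{\omega_1}$, which is not the defined notion of robustness. The paper's proof of (d) $\Rightarrow$ (a) avoids this entirely: assuming $\alpha^{+L}<\omega_1$, it exhibits a set $S\subseteq\alpha$ with $S\in L_{\tau^\alpha}$ (the indices of $\Pi_1$-formulas defining singletons over $L_{\omega_1}$; $S\in L_{\alpha^{+L}}$ by condensation and $\alpha^{+L}<\tau^\alpha$) and a cofinal map $f\colon S\to\tau^\alpha$ (sending $i$ to the least level containing the witnesses) that is definable over $L_{\tau^\alpha}$ by a Boolean combination of $\Sigma_1$-formulas, directly contradicting $\Sigma_2$-collection. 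You should replace your hull argument with something of this shape.

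Two smaller points on the (e)/(f) block. First, your (f) $\Rightarrow$ (e) concludes that $\hat\tau^\alpha$ is ``$\Sigma_2^\alpha$-robust'' after checking only $\Sigma_2$ \emph{sentences} in $\alpha$; the Tarski test again allows arbitrary parameters $y\in L_{\hat\tau^\alpha}$, which matching $\Sigma_2$-theories does not control. The fix is the one the paper uses: show $\tau^\alpha\leq\hat\tau^\alpha$ via the characterisation of $\tau^\alpha$ as the supremum of ordinals $\Sigma_2$-definable in $\alpha$ over $L_{\omega_1}$ (for each such $\beta$ the sentence asserting a witness exists is true in $L_{\hat\tau^\alpha}$, and $\Pi_1$-elementarity from (f) forces the witness to be the real $\beta$, so $\beta<\hat\tau^\alpha$). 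Second, your proposed links (c) $\Rightarrow$ (e) and (e) $\Rightarrow$ (c) are shaky: the sentence $\exists x\,\varphi_\beta(x)$ can be satisfied spuriously in $L_\gamma$ for small $\gamma$ because $\Pi_1$ matrices are only downward absolute, and your (e) $\Rightarrow$ (c) invokes the machinery of (b) $\Rightarrow$ (c), which presupposes (a) or (b). The paper instead proves (a) $\Rightarrow$ (e) (via the preservation of $\Sigma_2$-defined singletons between $L_{\tau^\alpha}$ and $L_{\hat\tau^\alpha}$ plus a relativisation trick) and (e) $\Rightarrow$ (a) directly by a L\"owenheim--Skolem argument in $L$, which keeps the logical dependencies clean.
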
 
\begin{proof} 
\ref{Sigma2-stable is elementary 1} $\Rightarrow$ \ref{Sigma2-stable is elementary 5}: 
Suppose that $x\in L_{\tau^\alpha}$. 
Take any $\gamma$ that is $\Sigma_2$-definable in $\alpha$ over $L_{\omega_1}$ with $x\in L_\gamma$. 
Since $\alpha^{+L}=\omega_1$, $|L_\gamma|\leq|\alpha|$ in $L$ and the $L$-least surjection $f\colon \alpha\rightarrow L_\gamma$ is $\Sigma_2$-definable in $\alpha$ over $L_{\omega_1}$. 
Hence every element of $L_\gamma$ is $\Sigma_2$-definable in $\alpha$ over $L_{\omega_1}$. 

\ref{Sigma2-stable is elementary 5} $\Rightarrow$ \ref{Sigma2-stable is elementary 4}: 
By \ref{Sigma2-stable is elementary 5}, $L_{\tau^\alpha}$ equals the collection of all sets which are $\Sigma_2$-definable in $\alpha$ over $L_{\omega_1}$. 
Suppose that $\exists \vec{x}\ \varphi(\vec{x},y)$ holds in $L_{\omega_1}$ for some $y\in L_{\tau^\alpha}$, where $\varphi$ is a $\Pi_1$-formula in $\alpha$. 
Then the least $\xi$ such that there is some $\vec{x}\in L_\xi$ with $L_{\omega_1}\models \varphi(\vec{x},y)$ is $\Sigma_2$-definable in $\alpha$ over $L_{\omega_1}$. 
Hence $\xi<\tau^\alpha$. 
Take any such $\vec{x}\in L_\xi$. 
Then $L_{\tau^\alpha}\models \varphi(\vec{x},y)$ by $\Pi_1$ downwards absoluteness. 

\ref{Sigma2-stable is elementary 4} $\Rightarrow$ \ref{Sigma2-stable is elementary 6}: 
This holds by Lemma \ref{substructures of Sigma2-admissibles}. 

\ref{Sigma2-stable is elementary 6} $\Rightarrow$ \ref{Sigma2-stable is elementary 1}: 
Towards a contradiction, suppose that $\alpha^{+L}<\omega_1$. 
We shall find some $S\in L_{\tau^\alpha}$ and a cofinal function $f\colon S\rightarrow \tau^\alpha$ that is $\Sigma_2$-definable over $L_{\tau^\alpha}$ with parameters. 
Let $\vec{\varphi}=\langle \varphi_i(x,y)\mid i\in\alpha\rangle$ list all $\Pi_1$-formulas with the free variables $x,y$ and parameters ${\leq}\alpha$ in a simply definable way. 
Let $S$ denote the set of all $i\in \alpha$ such that $\exists x\ \varphi_i(x,y)$ defines a singleton over $L_{\omega_1}$. 
We have $\tau^\alpha>\alpha^{+L}$, since $\alpha^{+L}$ is $\Pi_1$-definable in $\alpha$. 
Hence $S\in L_{\tau^\alpha}$ by condensation. 
Define $f\colon S\rightarrow \tau^\alpha$ by letting $f(i)$ be the least $\gamma$ such that there exist $x,y\in L_\gamma$ with $L_{\omega_1}\models \varphi_i(x,y)$. 
$f$ is cofinal in $\tau^\alpha$, since $\tau^\alpha$ is the supremum of ordinals which are $\Sigma_2$-definable over $L_{\omega_1}$. 
$f$ is defined over $L_{\omega_1}$ by a Boolean combination of $\Sigma_1$-formulas with parameters in $\tau^\alpha$. 
Since $L_{\tau^\alpha}\prec_{\Sigma_1}L_{\omega_1}$ by Lemma \ref{Sigma1 elementarity of Ltau}, the same combination defines $f$ over $L_{\tau^\alpha}$. 
In particular, $f$ is $\Sigma_2$-definable over $L_{\tau^\alpha}$ with parameters. 

\ref{Sigma2-stable is elementary 1} $\Rightarrow$ \ref{Sigma2-stable is elementary 2}: 
Since \ref{Sigma2-stable is elementary 1} $\Rightarrow$ \ref{Sigma2-stable is elementary 5}, 
every element of $L_{\tau^\alpha}$ is $\Sigma_2$-definable in $\alpha$ over $L_{\omega_1}$ by \ref{Sigma2-stable is elementary 5}. 

\begin{claim*} 
A $\Sigma_2$-formula $\varphi(x)$ in $\alpha$ defines a singleton over $L_{\tau^\alpha}$ if and only if it defines a singleton over  $L_{\hat{\tau}^\alpha}$. 
\end{claim*} 
\begin{proof} 
Suppose that $\varphi(x)$ defines a singleton in $L_{\tau^\alpha}$. 
Since $\exists x\ \varphi(x)$ and $\forall x, y\ [(\varphi(x) \wedge \varphi(y) \rightarrow x= y]$ hold true in $L_{\tau^\alpha}$, they hold in $L_{\hat{\tau}^\alpha}$ as well. 
The other direction is analogous. 
\end{proof} 

Towards a contradiction, suppose that $\hat{\tau}^\alpha<\tau^\alpha$. 
By \ref{Sigma2-stable is elementary 5}, $\hat{\tau}^\alpha$ is $\Sigma_2$-definable in $\alpha$ over $L_{\tau^\alpha}$ by some formula $\varphi(x)$. 
Let $\psi(x)= \varphi(x)\wedge x\in \mathrm{Ord}$. 
By the previous claim, $\psi$ defines an ordinal $\gamma$ in $L_{\hat{\tau}^\alpha}$. 
It suffices to show that $L_\gamma$ has the same theory with parameters ${\leq}\alpha$ as $L_{\hat{\tau}^\alpha}$, since this contradicts the minimality of $\hat{\tau}^\alpha$. 
To see this, take any sentence $\theta$ in $\alpha$ that is true in $L_{\hat{\tau}^\alpha}$. 
Then $\chi=\exists \eta\  [\varphi(\eta) \wedge \exists y \ (L_\eta=y \wedge \theta^y)]$ holds in $L_{\tau^\alpha}$, where $\theta^y$ denotes the relativisation obtained by replacing all quantifiers $\exists x$ and $\forall x$ by bounded quantifiers $\exists x\in y$ and $\forall x\in y$, respectively. 
$\chi$ is $\Sigma_2$, since the formula $L_\alpha=y$ is $\Sigma_1$. 
Therefore $\chi$ holds in $L_{\hat{\tau}^\alpha}$. 
So $L_\gamma$ satisfies $\theta$. 

\ref{Sigma2-stable is elementary 2} $\Rightarrow$ \ref{Sigma2-stable is elementary 1}: 
Suppose that $\alpha^{+L}<\omega_1$. 
Take some $L_\beta$ of size ${<}\alpha^{+L}$ with the same first-order theory as $L_{\omega_1}$ in parameters ${\leq}\alpha$ by L\"owenheim-Skolem in $L$. 
Then $\hat{\tau}^\alpha \leq\beta < \alpha^{+L}<\tau^\alpha$. 

\ref{Sigma2-stable is elementary 2} $\Rightarrow$ \ref{Sigma2-stable is elementary 3}: This follows from Lemma \ref{Sigma1 elementarity of Ltau}. 

\ref{Sigma2-stable is elementary 3} $\Rightarrow$ \ref{Sigma2-stable is elementary 2}:  
It suffices to show $\tau^\alpha \leq \hat{\tau}^\alpha$. 
Suppose that $\exists \vec{x}\ \varphi(\vec{x})$ 
holds in $L_{\omega_1}$, where $\varphi$ is a $\Pi_1$-formula in $\alpha$. 
Let $\gamma$ be least such that there is some $\vec{x}\in L_\gamma$ with $L_{\omega_1}\models \varphi(\vec{x})$. 
It suffices to show $\gamma\leq\hat{\tau}^\alpha$. 
In other words, it remains to find some $\vec{x}\in L_{\hat{\tau}^\alpha}$ such that $\varphi(\vec{x})$ holds in $L_{\omega_1}$. 
To see this, note that $\exists \vec{x}\ \varphi(\vec{x})$ holds in $L_{\hat{\tau}^\alpha}$ by the definition of $\hat{\tau}^\alpha$. 
Take $\vec{x}\in L_{\hat{\tau}^\alpha}$ such that $\varphi(\vec{x})$ holds in $L_{\hat{\tau}^\alpha}$. 
Then $\varphi(\vec{x})$ holds in $L_{\omega_1}$ by \ref{Sigma2-stable is elementary 3} as required. 
\end{proof} 

Note that  $\hat{\tau}^\alpha>(\tau^\alpha)^L$ if $\alpha<\omega_1^L$ and $\alpha^{+L}<\omega_1$. 
To see this, we first show that $\hat{\tau}^\alpha\geq (\tau^\alpha)^L$. 
Any $\gamma<(\tau^\alpha)^L$ is $\Sigma_2$-definable in $\alpha$ over $L_{\omega_1^L}$ by some $\Sigma_2$-formula $\varphi_\gamma(x)$ in $\alpha$ by Lemma \ref{when the least Sigma2-stable is Sigma2-elementary} \ref{Sigma2-stable is elementary 5}. 
Then $\gamma$ is definable in $L_{\omega_1}$ by $\psi_\gamma(x)=(L_{\omega_1^L}\models \varphi_\gamma(x))$, a $\Sigma_2$-formula in $\alpha$. 
For any $\gamma<(\tau^\alpha)^L$, $\exists x\ \psi_\gamma(x)$ and $\forall x,y\ [(\psi_\gamma(x)\wedge \psi_\gamma(y) )\rightarrow x= y]$ hold in $L_{\omega_1}$. 
Moreover, for any $\gamma<\delta<\tau^L$, $\forall x,y\ [(\psi_\gamma(x)\wedge \psi_\delta(y) )\rightarrow x<_L y]$ holds in $L_{\omega_1}$. 
Since these statements hold in $L_{\hat{\tau}^\alpha}$ as well, we have $\hat{\tau}^\alpha\geq \tau^L$. 
Finally, $L_{\omega_1^L}$ and $L_{\omega_1}$ have different $\Sigma_2$-theories, since there is an uncountable cardinal in $L_{\omega_1}$, but not in $L_{\omega_1^L}$. 
Therefore $L_{(\tau^\alpha)^L}$ and $L_{\hat{\tau}^\alpha}$ have different $\Sigma_2$-theories. 
Hence $\hat{\tau}^\alpha>(\tau^\alpha)^L$.

\begin{proposition} 
If $\omega_1^L=\omega_1$, then $\tau$ equals the (strict) supremum of $L$-levels of $\Pi^1_2$-singletons. 
\end{proposition}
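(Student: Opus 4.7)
The plan is to establish the strict equality as two inequalities: every $\Pi^1_2$-singleton has $L$-rank strictly below $\tau$, and the $L$-ranks of $\Pi^1_2$-singletons are cofinal in $\tau$. Both directions rely on Lemma \ref{when the least Sigma2-stable is Sigma2-elementary}, which under $\omega_1^L=\omega_1$ identifies $L_\tau$ with the class of $\Sigma_2$-definable elements of $L_{\omega_1}$, together with the Col-forcing translation of $\Pi_1(H_{\omega_1})$-formulas to $\Sigma_2(L_{\omega_1})$-formulas developed in the proof of $\gamma=\tau$.

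For the upper bound, let $x$ be a $\Pi^1_2$-singleton, so $\{x\}$ is defined by some $\Pi_1$-formula $\varphi$ over $H_{\omega_1}$. Under $\omega_1^L=\omega_1$, equivalently the non-existence of $0^\#$, a standard absoluteness argument (Shoenfield for $\Pi^1_2$ combined with the $\Pi^1_2$-basis phenomenon) forces $x\in L$, hence $x\in L_{\omega_1}$. Applying the Col-forcing translation to $\varphi$, the $L$-rank $\beta$ of $x$ is the unique ordinal with
$$\Vdash^{L_{\omega_1}}_{\Col(\omega,\beta)}\ \exists y\in 2^\omega\,(y\text{ has }L\text{-rank }\beta\wedge\varphi(y)).$$
Since forcing a $\Pi_1$-statement is $\Pi_1$ over $L_{\omega_1}$, the outer existential renders this a $\Sigma_2$-definition of $\beta$ over $L_{\omega_1}$, so $\beta<\tau$.

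For the lower bound, given $\alpha<\tau$ I produce a $\Pi^1_2$-singleton of $L$-rank at least $\alpha$. Since the $\Pi_1$-definable ordinals over $L_{\omega_1}$ are cofinal in $\tau$, fix such a $\beta\geq\alpha$ witnessed by a $\Pi_1$-formula $\chi$. The natural candidate is the $<_L$-least real $r$ coding an ordinal $\gamma$ with $\chi(\gamma)$; its $L$-rank is at least $\beta$. I then verify that $\{r\}$ is $\Pi_1(H_{\omega_1})$ by translating the defining condition ``$y$ codes some $\gamma$ with $\chi(\gamma)$, and no $z<_L y$ does so'' into a genuine $\Pi_1(H_{\omega_1})$-statement, exploiting that $<_L$-predecessors of a constructible real lie in $L$ and that ``$z\notin L$'' is $\Pi_1$ over $H_{\omega_1}$.

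The main obstacle is the lower bound: the naive reading of $<_L$-minimality produces a Boolean combination rather than a $\Pi_1$-statement, since ``$z<_L y$'' is $\Sigma_1$ over $H_{\omega_1}$ while ``$z$ does not code some $\gamma$ with $\chi(\gamma)$'' is $\Sigma_1$. Collapsing this combination to a genuine $\Pi_1$-formula requires careful quantifier bookkeeping, combining the $\Pi_1$-character of $\chi$ with the ``lies in $L$'' side condition to absorb the $<_L$-quantifier. As a fallback, a fine-structural master-code argument provides an alternative: for each $\Pi_1$-definable $\beta<\tau$ one directly produces a canonical $\Pi^1_2$-singleton of $L$-rank approximately $\beta$, and since $\cof(\tau)^L=\omega$ (noted after Theorem \ref{main theorem}) these $L$-ranks are cofinal in $\tau$.
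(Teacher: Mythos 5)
Your overall decomposition (upper bound: the $L$-rank of a constructible $\Pi^1_2$-singleton is $\Sigma_2$-definable over $L_{\omega_1}$, hence ${<}\tau$; lower bound: produce $\Pi^1_2$-singletons of $L$-rank above each $\Pi_1$-definable ordinal, which are cofinal in $\tau$) is the same as the paper's. The upper bound is essentially salvageable, but it contains two false side-claims. First, $\omega_1^L=\omega_1$ is \emph{not} equivalent to the non-existence of $0^\#$ (collapse $\omega_1^L$ over $L$ to see the failure of one direction). Second, $\omega_1^L=\omega_1$ does \emph{not} force every $\Pi^1_2$-singleton into $L$: Jensen's forcing is ccc in $L$ and its generic real is a non-constructible $\Pi^1_2$-singleton in a model where $\omega_1^L=\omega_1$. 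Neither error is fatal only because the proposition concerns $L$-levels, so non-constructible singletons are irrelevant; for constructible $x$ you do not even need the collapse, since $\varphi$ is $\Pi^1_2$ in the real parameter $x\in L_{\omega_1}$ and Shoenfield gives $V\models\varphi(y)\Leftrightarrow L_{\omega_1}\models\varphi^*(y)$ for a $\Pi_1$ translation $\varphi^*$, whence ``$\exists y\in L_{\beta+1}\setminus L_\beta\ \varphi^*(y)$'' is a $\Sigma_2$ definition of the $L$-rank. (As written, your forced statement has an existential real quantifier inside the forcing and needs the homogeneity/ground-model-witness massaging to come out $\Sigma_2$.)

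The genuine gap is in the lower bound, which is the substantive half. Your candidate, the $<_L$-least $r\in\WO_\beta$, is a reasonable real (its $L$-rank is at least the largest admissible ${\leq}\,\beta$, which suffices for cofinality since the $\sigma_\nu$ are admissible and cofinal in $\tau$), but the claim that $\{r\}$ is $\Pi^1_2$ is exactly the hard point and your proposed fix does not work. The clause ``no $z<_L r$ codes $\beta$'' can indeed be made $\Pi^1_2$ (``$z<_L r$'' is $\Sigma^1_2$, so its negation is $\Pi^1_2$), but then every \emph{non-constructible} member of $\WO_\beta$ satisfies it vacuously, so you must also assert ``$r\in L$'' --- and for a bare wellorder code that is irreducibly $\Sigma^1_2$, destroying the $\Pi^1_2$ definition. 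The observation that ``$z\notin L$'' is $\Pi_1$ over $H_{\omega_1}$ does not absorb this: the constructibility assertion is on $r$, not on the quantified $z$. The paper's proof resolves precisely this obstacle by changing the object: it takes the $L$-least code $x$ for the structure $L_\gamma$ itself (where $\gamma$ is the least level making $\alpha$ countable with a new definable real), and expresses ``$x'$ is definable over, and least in, the coded level'' as \emph{first-order conditions internal to the structure coded by $x'$} (conditions \ref{cond c}--\ref{cond e}), which are arithmetic in $x'$ and \emph{imply} $x'\in L$ once wellfoundedness ($\Pi^1_1$) and ``the coded structure is an $L$-level'' are imposed; only the single clause ``$\varphi(\alpha')$ holds in $V$'' is genuinely $\Pi^1_2$. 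Without this device (or an equivalent one) your singleton is not shown to be $\Pi^1_2$, and your fallback ``fine-structural master-code argument'' is an assertion of the desired conclusion rather than a proof.
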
 
\begin{proof} 
Note that $\Pi^1_2$ formulas are equivalent to $\Pi_1$-formulas over $L_{\omega_1}$. 
By the definition of $\tau$, $\tau$ is a strict upper bound for these $L$-levels. 

For the other direction, note that the $\Pi_1$-definable ordinals are cofinal in $\tau$ by Lemma 4.1 in \cite{decisiontimes}. 
It thus suffices to show that for any $\Pi_1$-definable ordinal $\alpha$, there exists a $\Pi^1_2$-singleton $x\notin L_\alpha$. 
Suppose that $\alpha$ is $\Pi_1$-definable by the formula $\varphi(x)$. 
Using $\omega_1^L=\omega_1$, let $\gamma$ be least such that $\alpha$ is countable in $L_\gamma$ and $L_{\gamma+1}\setminus L_\gamma$ contains reals. 
Let $x$ be the $L$-least code for $L_\gamma$. 
Note that $x\in L_{\gamma+1}$ by acceptability of the $L$-hierarchy \cite[Theorem 1]{boolos1969degrees}. 
Suppose that $x$ is $\Sigma_k$-definable with parameters over $L_\gamma$. 
Let $\pi:L_\gamma\rightarrow \omega$ be the unique isomorphism from $(L_\gamma,\in)$ to $(\omega,x)$ and $\pi(\alpha)=n$. 
Then $x$ is $\Pi_1$-definable by the conjunction of the statements: 
\begin{enumerate-(a)} 
\item 
\label{cond a} 
$x'$ codes an $L$-level $L_{\gamma'}$  via the pairing function. 
\item 
\label{cond b} 
From the viewpoint of $x'$, $n$ is an ordinal and 
for the corresponding ordinal $\alpha'$ in $V$ 
and any ordinal $\beta$ with $\alpha'\cong \beta$, $\varphi(\beta)$ holds. 
\item 
\label{cond c} 
$x'$ is $\Sigma_k$-definable with parameters over $L_{\gamma'}$. 
\item 
\label{cond d} 
$\gamma'$ is the least level such that $\alpha'$ is countable in $L_{\gamma'}$ and there exists a new real $\Sigma_k$-definable over $L_{\gamma'}$. 
\item 
\label{cond e} 
$x'$ is the $L$-least code for $\alpha'$ in $L_{\gamma'+1}$.\footnote{This is expressible by a $\Sigma_{k+1}$-formula over $L_{\gamma'}$, since the $\Sigma_k$-definable reals appear before the $\Sigma_{k'}$-definable ones for all $k'>k$ in the canonical wellorder of $L$. }  
\end{enumerate-(a)}
Note that \ref{cond c}-\ref{cond e}  are expressible by first-order formulas over $L_{\gamma'}$. 
\ref{cond a} and \ref{cond b} imply $\alpha'=\alpha$, 
\ref{cond a}-\ref{cond d} imply $\gamma'=\gamma$ and 
\ref{cond a}-\ref{cond e} imply $x'=x$.
\end{proof}

\section{Lengths of ranks} 
\label{section lengths of ranks} 

\subsection{The upper bound}
\label{section upper bound} 

The next lemma shows that inner models are correct about countable ranks of wellfounded $\Sigma^1_2$ relations. 
If $R$ is a wellfounded relation on a class $A$, let $\rank_R(x)$ or simply $\rank(x)$ denote the rank in $R$ of some $x\in A$. 

\begin{lemma} 
\label{correct ranks in inner models} 
Suppose that $M$ is an admissible set with $M\prec_{\Sigma^1_2}V$ and $\alpha$ is countable in $M$. 
Let $R$ be a wellfounded $\Sigma^1_2$ relation. 
\begin{enumerate-(1)} 
\item 
\label{correct ranks in inner models 1} 
``$\rank(x)\geq\alpha$'' is $\Sigma^1_2$ in any code for $\alpha$.  
\item 
\label{correct ranks in inner models 2} 
``$\rank(x)=\alpha$'' and ``$\exists x \rank(x)=\alpha$'' are absolute between $M$ and $V$. 
\end{enumerate-(1)} 
This relativises to reals. 
\end{lemma}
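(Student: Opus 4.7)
The plan is to establish (1) via the standard $\Sigma^1_2$ characterization of rank through chains, then derive both halves of (2) from (1) using the assumption $M\prec_{\Sigma^1_2}V$.

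For (1), let $T_R$ denote the transitive closure of $R$; a finite chain $y=z_0\mathrel{R}z_1\mathrel{R}\cdots\mathrel{R}z_n=x$ can be coded by a single real, so $T_R$ is again $\Sigma^1_2$. Given $w\in\WO$ with $\alpha_w=\alpha$, I would use the equivalence
$$\rank_R(x)\geq\alpha \;\Longleftrightarrow\; \exists \langle y_n\rangle_{n\in\omega}\ \forall n,m\in\omega\ \bigl[(n<_w m\to y_n\mathrel{T_R}y_m)\wedge y_n\mathrel{T_R}x\bigr].$$
The right-to-left direction is immediate since $\rank_R$ is strictly monotone along $T_R$, so an order-preserving image of $(\omega,<_w)$ below $x$ forces the rank to be at least $\alpha$. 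The converse uses AC to build a single $T_R$-increasing chain of length $\alpha$ through $x$ by recursion on $\beta<\alpha$. The displayed condition is then manifestly $\Sigma^1_2$ in $w$ and $x$, giving (1).

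For (2), write $\rank_R(x)=\alpha$ as the conjunction $\rank_R(x)\geq\alpha \wedge \lnot\,\rank_R(x)\geq\alpha+1$. Since $\alpha$ is countable in $M$, I can choose codes $w,w'\in M$ for $\alpha$ and $\alpha+1$, and by (1) this conjunction is a Boolean combination of $\Sigma^1_2$ formulas with parameters in $M$; absoluteness between $M$ and $V$ then follows from $M\prec_{\Sigma^1_2}V$. For ``$\exists x\ \rank_R(x)=\alpha$'', I rely on the fact that the set of achieved ranks of any wellfounded relation is downward closed: given any $z$ with $\rank_R(z)\geq\alpha$, descending along $R$-predecessors produces some $y$ with $\rank_R(y)=\alpha$. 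Consequently this statement is equivalent to ``$\exists x\ \rank_R(x)\geq\alpha$'', which by (1) is $\Sigma^1_2(w)$ and hence absolute between $M$ and $V$.

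Relativization to an additional real parameter is routine, as $\Sigma^1_2$ is closed under substitution of real parameters. The principal technical point is the $(\Rightarrow)$ direction of the equivalence in (1): one must assemble a single chain of length $\alpha$ through $x$ rather than merely separate witnesses $y_\beta$ for each $\beta<\alpha$. The standard solution is to fix a wellordering of the reals and at each stage $\beta<\alpha$ choose the least $y\mathrel{T_R}x$ that lies $T_R$-above all previously chosen points and has $\rank_R(y)\geq\beta$; the downward closure observation used in (2) is in effect a simpler instance of the same argument.
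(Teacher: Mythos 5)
Your part (2) and the reduction of ``$\exists x\,\rank(x)=\alpha$'' to ``$\exists x\,\rank(x)\geq\alpha$'' are fine and essentially match the paper's argument, but your proof of (1) contains a genuine error: the left-to-right direction of your displayed equivalence is false. Having $\rank_R(x)\geq\alpha$ does \emph{not} imply the existence of a linear $T_R$-chain of order type $\alpha$ below $x$. Counterexample: let $R$ be the (already transitive, wellfounded, arithmetic) relation on the tree of finite strictly decreasing sequences of natural numbers given by $s\mathrel{R}t$ iff $s$ properly extends $t$. The root $\emptyset$ has rank $\omega$, but any $T_R$-linearly-ordered set of predecessors of $\emptyset$ consists of pairwise comparable finite sequences and is therefore finite, so there is no chain of type $\omega$. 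Your proposed recursion breaks at exactly this point: at some stage there is no $y\mathrel{T_R}x$ lying $T_R$-above \emph{all} previously chosen points with the required rank --- the obstruction is precisely that each new choice must be compatible with the entire history rather than with its immediate predecessor. Separate witnesses $y_\beta$ with $\rank_R(y_\beta)\geq\beta$ do exist for each $\beta<\alpha$, but they cannot in general be assembled into a single chain, which is why the ``downward closure'' observation you invoke for (2) is not ``a simpler instance of the same argument''.

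The repair, and the paper's route, is to replace the linear order by a wellfounded tree of rank $\alpha$: fix $T=[\alpha]^{<\omega}$, the tree of finite strictly decreasing sequences below $\alpha$ ordered by inclusion, and express $\rank(x)\geq\alpha$ as the existence of a family $\langle x_t\mid t\in T\rangle$ with $x_\emptyset=x$ and $x_t\mathrel{R}x_s$ whenever $s\subsetneq t$. The forward direction now succeeds because each $x_{s^\smallfrown\langle\beta\rangle}$ only needs to be an $R$-predecessor of $x_s$ of rank at least $\beta$, which exists since $\rank(x_s)>\beta$ by the inductive invariant; the converse holds because $t\mapsto\rank(x_t)$ shows that $\rank(x)$ dominates the rank of $T$, which is $\alpha$. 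A code for $T$ is computable from any code for $\alpha$, so the resulting formula is $\Sigma^1_2$ in such a code, and your part (2) then goes through unchanged.
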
 
\begin{proof} 
\ref{correct ranks in inner models 1}: 
Fix a countable tree $T\in M$ of rank $\alpha$, for instance the tree $[\alpha]^{<\omega}$ of finite strictly decreasing sequences below $\alpha$, ordered by inclusion. 
Then $\rank(x)\geq\alpha$ is equivalent to: 
\begin{quote} 
 \emph{``There exists $\vec{x}=\langle x_t\mid t\in T\rangle$ with $x_\emptyset=x$ and 
$(x_t,x_s)\in R$ 
for all $s\subsetneq t$ in $T$''}. 
\end{quote} 
This is $\Sigma^1_2$ in any code for $T$. 
Such a code can be chosen to be computable in any code for $\alpha$. 

\ref{correct ranks in inner models 2}: 
To see that ``$\rank(x)=\alpha$'' is absolute, fix $y\in M$ with $\rank^M(y)=\alpha$. 
We have $\rank(y)^V\geq\alpha$ by Shoenfield absoluteness. 
Towards a contradiction, suppose $\rank(y)^V>\alpha$. 
Then ``$\exists x<_R y\ \rank(x)\geq\alpha$'' holds in $V$ and hence in $M$ by \ref{correct ranks in inner models 1}. 
But $\rank^M(y)=\alpha$. 

Finally, suppose that $\exists x \rank(x)=\alpha$ holds in $V$. 
It suffices to show $\exists x \rank(x)\geq\alpha$ in $M$. 
This holds by \ref{correct ranks in inner models 1}. 
\end{proof} 

The next result is an effective version of the Kunen-Martin theorem for $\Sigma^1_2$ relations of countable rank.  

\begin{proposition} 
\label{Kunen-Martin for Sigma12} 
The rank of any wellfounded  $\Sigma^1_2$ relation is either uncountable or ${<}\tau$. 
\end{proposition}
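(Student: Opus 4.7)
The plan is to show that if $R$ is a wellfounded $\Sigma^1_2$ relation with countable rank $\alpha$, then $\alpha$ is $\Sigma_2$-definable over $L_{\omega_1}$ with no parameters. Combined with the characterisation of $\tau$ from Section~\ref{subsection Kechris ordinal} as the strict supremum of $\Sigma_2$-definable ordinals over $L_{\omega_1}$, this immediately yields $\alpha < \tau$.

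The main step is to express ``$\rank R \geq \beta$'' for countable $\beta$ as a $\Sigma_1$ formula over $L_{\omega_1}$ in the single ordinal parameter $\beta$. By Lemma~\ref{correct ranks in inner models}\ref{correct ranks in inner models 1}, the subrank statement ``$\rank_R(x) \geq \gamma$'' is $\Sigma^1_2$ uniformly in any code for $\gamma$. Given a code $z \in \WO_\beta$, ``$\rank R \geq \beta$'' is then equivalent to the existence of a family $\langle y_\gamma : \gamma < \beta \rangle$ of reals, packaged through $z$ into a single real, satisfying $\rank_R(y_\gamma) \geq \gamma$ for each $\gamma < \beta$. Since $\Sigma^1_2$ is closed under countable intersection and under existential quantification over reals, this yields a $\Sigma^1_2$ formula in $z$; further existentially quantifying over $z$ and invoking the $\Delta_1$ condition $\otp(z) = \beta$ shows that ``$\rank R \geq \beta$'' is $\Sigma^1_2$ in the ordinal parameter $\beta$, hence $\Sigma_1$ over $L_{\omega_1}$ in $\beta$.

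Setting
$$\phi(\beta) \;\equiv\; (\rank R \geq \beta) \,\wedge\, \neg(\rank R \geq \beta+1),$$
$\phi$ is a conjunction of a $\Sigma_1$- and a $\Pi_1$-formula over $L_{\omega_1}$ in the free variable $\beta$, hence $\Delta_2$. Since $\alpha$ is the unique ordinal satisfying $\phi$, it is $\Sigma_2$-definable over $L_{\omega_1}$ with no parameters, and so $\alpha < \tau$ by the definition of $\tau$.

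The delicate point is obtaining a $\Sigma_1$- rather than a $\Pi_2$-definition of ``$\rank R \geq \beta$'' over $L_{\omega_1}$ in $\beta$. The naive reformulation $\forall \gamma < \beta\ \exists x\ \rank_R(x) \geq \gamma$ is only $\Pi_2$; the trick is to absorb the outer universal quantifier into an existential by packaging all witnesses as a single real-coded family indexed by a code for $\beta$, exploiting closure of $\Sigma^1_2$ under countable intersection. Lightfaceness of $R$ is essential, since any countable ordinal arises as the rank of a boldface $\Sigma^1_2$ wellorder given by a real parameter coding it.
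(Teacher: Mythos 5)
Your overall strategy --- pin down the countable rank $\alpha$ by a low-complexity definition and invoke the characterisation of $\tau$ from Section \ref{subsection Kechris ordinal} --- is the same as the paper's, but there is a genuine gap at the one point where the paper has to work: the passage from ``$\Sigma^1_2$ in a code for $\beta$'' to ``$\Sigma_1$ over $L_{\omega_1}$ in the ordinal $\beta$''. Quantifying existentially over a code $z\in\WO_\beta$ and over the packaged family of witnesses gives a formula that is $\Sigma_1$ over $H_{\omega_1}$ in $\beta$, not over $L_{\omega_1}$: the real quantifiers range over $V$, and when $\omega_1^L$ is countable and $\beta\geq\omega_1^L$ (which is exactly the regime where $\tau>\omega_1^L$ and the proposition has content), $\WO_\beta\cap L=\emptyset$, so the relativisation of your formula to $L_{\omega_1}$ is vacuously false. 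More generally, even for $\beta<\omega_1^L$ the witnesses for the subrank statements need not be constructible a priori; interpreting your $\phi$ inside $L_{\omega_1}$ would compute the rank of $R$ as seen from $L$, which can be strictly smaller than $\alpha$. So the ordinal your $\Sigma_2$ formula defines over $L_{\omega_1}$ need not be $\alpha$, and the appeal to ``$\Sigma_2$-definable ordinals over $L_{\omega_1}$ are bounded by $\tau$'' does not go through as written. (Your closing remark that lightfaceness of $R$ is essential is correct, but it does not address this.)

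There are two standard repairs, and the paper's proof is essentially the first. (i) Replace the statement about $\beta$ by its forced version $\Vdash^{L}_{\Col(\omega,\beta)}\psi(\beta)$: forcing a $\Pi_1$ (resp.\ $\Sigma_1$) statement is again $\Pi_1$ (resp.\ $\Sigma_1$) over $L_{\omega_1}$ in $\beta$, and the equivalence with the $V$-truth is recovered via Shoenfield absoluteness between $V$, $V[G]$ and $L[G]$ together with Lemma \ref{correct ranks in inner models}\ref{correct ranks in inner models 2}, which guarantees that the inner models compute the countable rank correctly. This is exactly the content of the paper's formula $\psi$ and the collapse step it performs. (ii) Work over $H_{\omega_1}$ instead: the set $\{\beta<\omega_1 \mid \rank R\leq\beta\}$ is nonempty and $\Pi_1$-definable over $H_{\omega_1}$ (by the universal-over-codes version of Lemma \ref{correct ranks in inner models}\ref{correct ranks in inner models 1}), its least element is $\alpha$, and Theorem \ref{main theorem}\ref{main theorem 1}\ref{main theorem 1c} (i.e.\ $\gamma=\gamma^1_2=\tau$) gives $\alpha<\tau$ directly; this also spares you the need for the $\Sigma_1$ lower-bound half of your $\phi$. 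Either way, the missing ingredient is the $H_{\omega_1}$-to-$L_{\omega_1}$ transfer, which is not automatic and is precisely what the collapse-forcing argument in the paper supplies.
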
 
\begin{proof} 
Let $\alpha$ denote that rank of a wellfounded $\Sigma^1_2$ relation $R$. 
Suppose that $\alpha$ is countable. 
Let $\psi(\beta)$ denote the following $\Pi_1$-formula in $\beta$: 
\begin{quote} 
``\emph{For all countable $\gamma$ with $L_\gamma\models \ZFC^-$, there exists a strict order preserving function $f\colon L_\gamma \rightarrow \beta$ with respect to $R^{L_\gamma}$ and $\in$.}'' 
\end{quote}  
We have $R^{L_\beta}\subseteq R$ for any such $L_\gamma$, since $R$ is $\Sigma^1_2$ and $L_\gamma$ is $\Sigma^1_1$-correct in $V$. 
Therefore, $R^{L_\gamma}$ is wellfounded with rank at most $\alpha$, so $\psi(\alpha)$ holds in $V$. 

Note that $\psi(\alpha)$ is $\Pi^1_2$ in any code for $\alpha$, so it is $\Pi_1$ in $\alpha$ in any model where $\alpha$ is countable. 
We thus need to collapse $\alpha$. 
Let $G$ be a $\Col(\omega,\alpha)$-generic filter over $V$ and note that $R^{V[G]}$ is wellfounded in $V[G]$ by Shoenfield absoluteness, since this is a $\Pi^1_2$ property. 
Denote the rank of $R$ by $\rank_R$. 
We have $\rank_R^V=\rank_R^{V[G]}=\rank_R^{L[G]}$ by Lemma \ref{correct ranks in inner models} for $L$ and $L[G]$ in $V[G]$ 
This shows $\psi(\alpha)$ in $L[G]$ and thus $\Vdash_{\Col(\omega,\alpha)} \psi(\alpha)$ in $L$. 
Note that the latter statement is $\Pi_1$ in $\alpha$ as well.  
Thus the $\Sigma_2$-statement $\exists \beta<\omega_1 \Vdash_{\Col(\omega,\beta)} \psi(\beta)$ holds in $L_{\omega_1}$. 
Pick some $\beta<\tau$ with $\Vdash_{\Col(\omega,\beta)}^{L_{\omega_1}} \psi(\beta)$ by the definition of $\tau$, so  $\psi(\beta)$ holds in $L[G]$. 
Then $\psi(\beta)$ holds in $V$ by Shoenfield absoluteness. 
Now pick a countable $\gamma$ such that $R\cap L_\gamma$ has rank $\alpha$ and $L_\gamma\prec L_{\omega_1}$. 
Since $\psi(\beta)$ holds in $V$, we have $\rank_{R\cap L_\gamma}=\alpha\leq\beta<\tau$ as required. 
\end{proof}

In particular, the length of any $\Sigma^1_2$ rank of countable length is strictly less than $\tau$.

\subsection{The lower bound}
\label{section - the lower bound} 

Suppose that $\alpha$ is a countable ordinal. 
Recall that an ordinal $\beta$ is an \emph{$\alpha$-index} if $\beta>\alpha$ and some $\Sigma_1$ fact in $L_{\omega_1}$ with parameters ${\leq}\alpha$ first becomes true in $L_\beta$. 
Thus $\sigma_\alpha$ is the supremum of $\alpha$-indices. $\sigma_{\alpha}$ is an admissible limit of admissible ordinals.

We shall construct a $\Pi^1_1$ rank of length at least $\nu$ for unboundedly many $\nu<\tau$. 
By \cite[Lemma 4.1]{decisiontimes}, there are unboundedly many $\nu<\tau$ such that $\nu$ is $\Pi_1$-definable over $L_{\omega_1}$. 
Take such an ordinal $\nu$ and fix a $\Pi_1$-formula $\varphi(x)$ that defines $\nu$ over $L_{\omega_1}$. 
Similar to \cite[Theorem 4.5]{decisiontimes}, we shall define a $\Pi^1_1$ subset $A$ of $\WO$ and a $\Pi^1_1$-rank of length $\sigma_\nu$ on $A$. 
Recall the notation $\alpha_x=\otp(x)$. 
The first approximation to $A$ is 
$$A^\nu:= \{ x\in \WO \mid \alpha_x>\nu\wedge \alpha_x \text{ is a $\nu$-index and $L_{\alpha_x} \models$ ``$\nu$ is least with $\varphi(\nu)$''}\}.$$ 
For any $\alpha>\nu$, $\varphi(\nu)$ holds in $L_\alpha$ by $\Pi_1$ downwards absoluteness. 
Moreover, by the definition of $\sigma_\nu$ there is some $\gamma<\sigma_\nu$ such that witnesses for the failure of $\varphi(\mu)$ for all $\mu<\nu$ appear before $\gamma$. 
Therefore, $x\in A^\nu$ for any $\nu$-index $\alpha_x>\gamma$. 
The $\nu$-indices $\alpha_x$ have supremum $\sigma_\nu$ and we will show that they have order type $\sigma_\nu$. 
The rank will simply be the restriction to $A$ of the rank of $\WO$, thus its length will be at least $\sigma_\nu$. 

Since we want to define $A$ by a $\Pi^1_1$-formula, we cannot use $\nu$ in the definition of $A$. 
We therefore vary $\nu$ in the definition of $A^\nu$: 
for any countable ordinal $\mu$, let 
$$A^\mu:= \{ x\in \WO \mid \alpha_x>\mu\wedge \alpha_x \text{ is a $\mu$-index and $L_{\alpha_x} \models$ ``$\mu$ is least with $\varphi(\mu)$''}\}.$$ 
Finally, let 
$$ A=  \bigcup_{\mu<\omega_1} A^\mu. $$  
This is a $\Pi^1_1$ set, since $x\in A$ if and only if $x\in \WO$, there is some $\mu<\alpha_x$ with $L_{\alpha_x}\models \varphi(\mu)$, $\mu$ is the least such ordinal, and $\alpha_x$ is a $\mu$-index. 
For the next proof, we shall take this as our formal definition of $A$. 
Note that $A$ is an intersection of $\WO$ with a $\Delta^1_1$ set, since the second part of the definition is first-order over $L_{\alpha_x}$. 
Moreover, it is easy to see that $A^\mu=\emptyset$ for all $\mu>\nu$ and $A^\mu$ is bounded by $\gamma$ for all $\mu<\nu$. 
Hence $A$ is a subset of $\WO_{<\nu}$. 
If $x\in A$, we shall write $\mu_x$ for the unique $\mu$ with $L_{\alpha_x} \models$ ``$\mu$ is least with $\varphi(\mu)$''. 

\begin{theorem} 
\label{lower bound for lengths of Pi11 ranks} 
The lengths of $\Pi^1_1$ ranks of countable length are unbounded in $\tau$. 
\end{theorem}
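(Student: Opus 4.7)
The plan is to show that, for each $\Pi_1$-definable ordinal $\nu<\tau$, the $\Pi^1_1$ set $A$ just constructed carries a $\Pi^1_1$-rank of countable length at least $\sigma_\nu$. Since the $\Pi_1$-definable ordinals below $\tau$ are cofinal in $\tau$ by Lemma 4.1 of \cite{decisiontimes}, and since $\sigma_\nu>\nu$, this yields $\Pi^1_1$-ranks of countable lengths that are cofinal in $\tau$, which is precisely what the theorem demands.

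I would first equip $A$ with the natural rank inherited from $\WO$: set $x\leq y\iff \alpha_x\leq\alpha_y$, choosing the witness relations $\sqsubseteq$ and $\sqsubset$ by restricting the standard $\Pi^1_1$-comparisons on $\WO$ to $A$ in the manner of Remark \ref{canonical choice for ranks}. The $\Pi^1_1$-definability of $A$ is immediate from its construction, since the conditions on $\alpha_x$ are first-order over $L_{\alpha_x}$ and ``$y$ codes $L_{\alpha_x}$'' is $\Pi^1_1$ in $x$. As $A\subseteq \WO_{<\sigma_\nu}$ by the argument already sketched (where ``$\WO_{<\nu}$'' in the preamble should be read as $\WO_{<\sigma_\nu}$), the rank has countable length.

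The crux is to establish that the length $|{\leq}|$ is at least $\sigma_\nu$. By the choice of $\gamma$, every $\nu$-index $\beta>\gamma$ is realised as $\alpha_x$ for some $x\in A^\nu\subseteq A$, so the set of achieved ranks contains all $\nu$-indices in the interval $(\gamma,\sigma_\nu)$, which is cofinal in $\sigma_\nu$. Since the length equals the order type of the achieved ranks, it remains to show that this cofinal set of $\nu$-indices has order type $\sigma_\nu$. I would argue this by transfinite construction of the $\alpha$-th $\nu$-index for each $\alpha<\sigma_\nu$: the successor step follows from $\sup\{\nu\text{-indices}\}=\sigma_\nu$, while the limit step uses that $\sigma_\nu$ is an admissible limit of admissibles, so no shorter enumeration definable inside $L_{\sigma_\nu}$ can be cofinal in $\sigma_\nu$.

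The main obstacle is this final order-type computation. The predicate ``$\beta$ is a $\nu$-index'' has a mixed $\Sigma_1\wedge\Pi_1$ character over $L_{\sigma_\nu}$, so plain $\Sigma_1$-replacement does not directly bound the enumeration of indices. A careful fine-structural argument is required, either reformulating the enumeration as a genuinely $\Sigma_1$-definable function over $L_{\sigma_\nu}$ with parameter $\nu$, or exploiting the stronger fact that $\sigma_\nu$ is a \emph{limit} of admissibles --- so that any putative shorter enumeration could be truncated and reflected into an admissible strictly below $\sigma_\nu$, contradicting the minimality built into the definition of the $\nu$-indices.
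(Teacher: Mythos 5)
Your proposal is correct and follows essentially the same route as the paper: equip $A$ with the rank inherited from $\WO$, bound its length above by $\sigma_\nu$ via $A\subseteq\WO_{<\sigma_\nu}$, and realise length $\sigma_\nu$ by recursively enumerating the $\nu$-indices. The ``main obstacle'' you flag dissolves, because ``$\beta$ is a $\nu$-index'' is $\Delta_1$ over $L_{\sigma_\nu}$ given $\beta$ (the satisfaction relation for set models is $\Delta_1$), so the enumeration is a $\Sigma_1$-recursion and plain admissibility of $\sigma_\nu$ already yields a well-defined sequence of length $\sigma_\nu$ --- this is exactly the paper's ``$\Delta_0$-recursion'' step, and no further fine structure is needed.
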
 
\begin{proof} 
Suppose that $\nu$ is $\Pi_1$-definable over $L_{\omega_1}$. 
Define $A$ as above. 
Let $\LO$ denote the set of linear orders on $\omega$. 
For $x,y\in \LO$, let $x\leq_\wo y$ if $\otp(x)\leq\otp(y)$ or $y$ is illfounded, and $x<_\wo y$ if $\otp(x)<\otp(y)$ or $y$ is illfounded. 
This defines a $\Pi^1_1$ rank on $\WO$. 

\begin{claim*} 
${\leq_\wo}{\upharpoonright}A$ induces a $\Pi^1_1$ rank on $A$. 
\end{claim*} 
\begin{proof} 
Define the following $\Pi^1_1$ relations $\sqsubseteq$ and $\sqsubset$. 
Recall that $A$ is the intersection of $\WO$ with a $\Delta^1_1$ set $B$. 
Let $x \sqsubset y$ if $x\in A$ and one of the following holds: 
\begin{itemize} 
\item 
$y\notin \LO$, 
\item 
$y\in \LO$ codes a linear order $\LL$ that is not embeddable into $\otp(x)$, or 
\item 
$y\notin B$. 
\end{itemize} 
Moreover, let $x \sqsubseteq y$ if $x\sqsubset y$ or \begin{itemize} 
\item 
$y\in \LO$ codes a linear order $\LL$ that is not embeddable into a strict initial segment of $\otp(x)$. 
\end{itemize} 
These relations agree with $<_\wo$ and $\leq_\wo$ on $A$ and satisfy overspill. 
They thus define a $\Pi^1_1$ rank on $A$. 
\end{proof} 

\begin{claim*} 
$|{<_\wo}{\upharpoonright}A| \leq\sigma_\nu$.  
\end{claim*} 
\begin{proof} 
It suffices to show that  $\alpha_x<\sigma_\nu$ for all $x\in A$. 
We have $\alpha_x<\sigma_{\mu_x}$, since $\alpha_x$ is a $\mu_x$-index by the definition of $A$. 
Moreover, $\mu_x\leq \nu$ by the definition of $\mu_x$. 
Since the function $\alpha\mapsto\sigma_\alpha$ is monotone, we have $\alpha_x<\sigma_{\mu_x}\leq \sigma_\nu$. 
\end{proof} 

\begin{claim*} 
$|{<_\wo}{\upharpoonright}A|\geq \sigma_\nu$. 
\end{claim*} 
\begin{proof} 
We shall construct a strictly increasing sequence $\vec{\xi}=\langle \xi_\alpha \mid \alpha<\sigma_\nu\rangle$ by a $\Delta_0$-recursion. 
Let $\xi_0=\nu$. 
Given $\langle \xi_\alpha \mid \alpha<\gamma\rangle$ for $1\leq\gamma<\sigma_\nu$, let $\xi_\gamma$ be the least $\nu$-index $\xi>\sup_{\alpha<\gamma} \xi_\alpha$, i.e. a new $\Sigma_1$-fact with parameters ${\leq}\nu$ becomes true in $L_{\xi_\gamma}$. 
Since $\sigma_\nu$ is admissible, $\vec{\xi}$ is a well-defined sequence of length $\sigma_\nu$. 
For all $\alpha<\sigma_\nu$, $A_\nu$ contains all codes for $\xi_\alpha$, since $\xi_\alpha\geq \nu$ is a $\nu$-index. 
Thus $A$ contains a strictly increasing sequence of length $\sigma_\nu$. 
\end{proof} 
Thus the rank's length is precisely $\sigma_\nu$. 
\end{proof} 

The next result strengthens the previous result by showing that $A$ admits no $\Pi^1_1$ rank shorter than $\sigma_\nu$. 
Let $\otp\colon\WO\rightarrow \omega_1$ denote the function mapping a real in $\WO$ to its order type. 
Thus $\otp[B]$ denotes the pointwise image of a subset $B$ of $\WO$. 

\begin{proposition} 
\label{no short ranks} 
Suppose that $\nu$ is a countable ordinal and $B$ is a $\Pi^1_1$ set with $\otp[B]\geq \sigma_\nu$. 
Then $B$ admits no $\Pi^1_1$ rank of length ${<}\sigma_\nu$. 
\end{proposition}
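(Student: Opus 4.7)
The plan is to argue by contradiction. Suppose $\phi$ is a $\Pi^1_1$-rank on $B$ of length $\gamma<\sigma_\nu$ witnessed by $\Pi^1_1$ relations $\sqsubseteq,\sqsubset$, with real parameters coded in $L_{\sigma_\nu}$ (which is automatic in the intended applications, where the underlying $\Pi^1_1$-formulas use only the parameter $\nu$; the fully general case relativises). The goal is to show $\sup\otp[B]<\sigma_\nu$, contradicting $\otp[B]\geq\sigma_\nu$.

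The first step is that each rank layer is uniformly $\Delta^1_1$. For every $y\in B$, the initial segment $B_{\leq\phi(y)}=\{x:x\sqsubseteq y\}$ is $\Pi^1_1(y)$ by definition, and it is also $\Sigma^1_1(y)$: for $x\in B$, linearity of the prewellorder gives $x\sqsubseteq y\iff\neg(y\sqsubset x)$, while for $x\notin B$ the canonical choice in Remark \ref{canonical choice for ranks} forces $x\not\sqsubseteq y$. Because $\sigma_\nu$ is an admissible limit of admissibles, the effective $\Sigma^1_1$-boundedness theorem yields $\sup\otp[B_{\leq\phi(y)}]<\omega_1^{\ck,y}<\sigma_\nu$ whenever $y\in L_{\sigma_\nu}$.

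The second step selects witnesses $y_\alpha\in L_{\sigma_\nu}$ for every $\alpha<\gamma$. The formula ``$y\in B\wedge\phi(y)=\alpha$'' is $\Sigma_1$ over $L_{\omega_1}$ in $(y,\alpha)$: wellfoundedness of the underlying $\Pi^1_1$-tree is $\Sigma_1$ (witnessed by a rank function in the countable universe $L_{\omega_1}$), and $\phi(y)=\alpha$ is $\Sigma_1$ via an order-isomorphism of the corresponding initial segment of the prewellorder with $\alpha$. Using the $\Sigma_1$-elementarity $L_{\sigma_\nu}\prec_1 L_{\omega_1}$ discussed after Definition \ref{definition of tau}, a witness $y_\alpha\in L_{\sigma_\nu}$ exists for each $\alpha<\gamma$. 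A whole sequence $\langle y_\alpha:\alpha<\gamma\rangle\in L_{\sigma_\nu}$ is then produced by $\Sigma_1$-collection inside the admissible $L_{\sigma_\nu}$.

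Combining these ingredients, the function $\alpha\mapsto\sup\otp[B_{\leq\alpha}]$ is $\Sigma_1$-definable over $L_{\sigma_\nu}$ from parameters in $L_{\sigma_\nu}$, with each value strictly below $\sigma_\nu$ by the first step. Applying $\Sigma_1$-replacement in $L_{\sigma_\nu}$ to this $\gamma$-sequence yields a bound $\eta<\sigma_\nu$ on its range, hence $\sup\otp[B]=\sup_{\alpha<\gamma}\sup\otp[B_{\leq\alpha}]\leq\eta<\sigma_\nu$, which contradicts $\otp[B]\geq\sigma_\nu$. The main obstacle is the joint control of the second and third steps: one must carefully verify that the $\Sigma_1$-elementarity of $L_{\sigma_\nu}$ in $L_{\omega_1}$ and the effective boundedness interact so that the bounds associated to the rank-witnesses in $L_{\sigma_\nu}$ stay uniformly below $\sigma_\nu$, exploiting both $\Sigma_1$-collection and the fact that $\sigma_\nu$ is a limit of admissibles.
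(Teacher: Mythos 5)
Your proof is correct in its essentials and arrives at the same contradiction as the paper, but by a somewhat different route. The paper also reflects witnesses for each rank value into $L_{\sigma_\nu}$ by $\Sigma_1$-reflection (your second step), but it then forces with $\Col(\omega,\delta)$ over $L_{\sigma_\nu}$ to collapse the length $\delta$ of the rank, so that the whole cofinal family is coded by a single real $\vec u\in L_{\sigma_\nu}[g]$; a single application of effective $\Sigma^1_1$-boundedness to $B=\{x\mid \exists i\ \neg(u_i\sqsubset x)\}$, which is $\Sigma^1_1(\vec u)$, then bounds $\otp[B]$ below $\omega_1^{\ck,\vec u}<\sigma_\nu$. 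You avoid the forcing entirely: you bound each initial segment $\{x\mid x\sqsubseteq y\}$ separately, using the same rewriting $x\sqsubseteq y\Leftrightarrow\neg(y\sqsubset x)$ that the paper needs implicitly to see its definition of $B$ is $\Sigma^1_1$, and then use $\Sigma_1$-collection in the admissible $L_{\sigma_\nu}$ together with the fact that $\sigma_\nu$ is a limit of admissibles to take the supremum. This is arguably more elementary; the collapse trick buys a single real parameter, which is the form in which the effective Kunen--Martin theorem is quoted.

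One step needs repair. The formula ``$\phi(y)=\alpha$'' is not $\Sigma_1$ over $L_{\omega_1}$: the direction $\phi(y)\leq\alpha$ is the negation of a $\Sigma_1$ statement, and your proposed witness --- an order-isomorphism of the full initial segment, a possibly uncountable set of reals, with $\alpha$ --- is not an element of $L_{\omega_1}$ or $H_{\omega_1}$. But you only need $\phi(y_\alpha)\geq\alpha$: the statement ``$y\in B$ and there is a $\sqsubset$-increasing $\alpha$-sequence below $y$'' is genuinely $\Sigma_1$ in $(y,\alpha)$ (it is $\Sigma^1_2$ in any code for $\alpha$, as in Lemma \ref{correct ranks in inner models}), witnesses exist in $V$ because the rank has length greater than $\alpha$, and $\{x\mid x\sqsubseteq y_\alpha\}$ still contains every $x$ with $\phi(x)\leq\alpha$, so the union over $\alpha<\gamma$ still covers $B$. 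With that substitution the argument goes through; your standing assumption that the defining formulas are lightface (or have parameters in $L_{\sigma_\nu}$) is the same one the paper makes tacitly.
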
 
\begin{proof} 
Towards a contradiction, suppose that $\sqsubseteq$ and $\sqsubset$ define a rank on $B$ of length $\delta<\sigma_\nu$. 
Let $\psi(\vec{\alpha},\vec{x},\gamma)$ denote the conjunction of the statements: 
\begin{itemize} 
\item  
``\emph{$\vec{\alpha}=\langle \alpha_i\mid i<\gamma\rangle$ is a strictly increasing sequence of ordinals}.'' 
\item 
``\emph{$\vec{x}=\langle x_i\mid i<\gamma\rangle$ with $x_i\in B$ is strictly increasing with respect to $\sqsubset$}.'' 
\item 
``\emph{$\otp(x_i)=\alpha_i$ for all $i<\gamma$}.'' 
\end{itemize} 
$\psi$ is a $\Sigma_1$-formula, since ``$x_i\in B$'' and ``$x_i\sqsubset x_j$'' are $\Pi^1_1$ and thus $\Sigma_1$. 
The statement $\exists \vec{x}\ \exists \vec{\alpha}\ \psi(\vec{x},\vec{\alpha},\delta)$ holds in $V$ and hence also in $L_{\sigma_\delta}$. 
Let $\vec{x}, \vec{\alpha} \in L_{\sigma_\delta}$ witness this. 
Now let $g\in V$ by $\Col(\omega,\delta)$-generic over $L_{\sigma_\nu}$. 
Since $\gamma\leq\nu$, we have $\sigma_\gamma\leq \sigma_\nu$. 
Pick $u\in L_{\sigma_\nu}[g]$ with $\otp(u)=\delta$ and let $\vec{v}=\langle v_i \mid i\in\omega \rangle$ denote the enumeration of $\vec{x}$ along $u$, i.e.,  
$$\forall i,j\ ((i,j)\in u \Rightarrow v_i\sqsubset v_j)$$ 
holds. 
The latter is $\Pi^1_1$ in $u$ and $\vec{y}$. 
Since $L_{\sigma_\nu}[g]$ is a strictly increasing union of admissible sets, $L_{\sigma_\nu}[g]\prec_{\Sigma^1_1}V$. 
Hence the previous statement holds in $V$. 
It follows that in $V$, $\vec{u}$ is a strictly increasing sequence with respect to $\sqsubset$ of length $\delta$. 
Therefore, $\vec{u}$ is unbounded in the rank. 
Thus  $x\in B$ $\Longleftrightarrow$ $\exists i\in\omega\ x \sqsubseteq u_i$ is a $\Sigma^1_1$ definition of $B$ in $\vec{u}$. 
Then $B$ is bounded by $\omega_1^{\ck,\vec{u}}$ by the effective Kunen-Martin theorem (see \cite[Lemma 4.4]{decisiontimes}). 
Since $L_{\sigma_\nu}[g]$ is a strictly increasing union of admissible sets, we have $\omega_1^{\ck,\vec{u}}<\sigma_\nu$. 
But $B$ is unbounded in $\sigma_\nu$ by definition. 
\end{proof} 

Next is an analogous result for $\Sigma^1_2$-ranks. 

\begin{proposition} 
\label{Sigma12 long ranks} 
For any $\Pi_1$-definable $\nu$, there exists a $ \Sigma^1_2$ set that admits a countable $\Sigma^1_2$-rank of length $\sigma_\nu$, but none shorter than $\sigma_\nu$. 
\end{proposition}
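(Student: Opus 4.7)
The plan is to reuse the $\Pi^1_1$ set $A$ constructed in the proof of Theorem \ref{lower bound for lengths of Pi11 ranks}. Since $A$ is $\Pi^1_1$, it is $\Sigma^1_2$, and the $\Pi^1_1$ rank $\leq_{\wo}\restriction A$ of length $\sigma_\nu$ already constructed is automatically a $\Sigma^1_2$ rank. This delivers the upper bound with no further work. If this $A$ turns out to be insufficient for the lower bound, I would replace it by a closely related $\Sigma^1_2$ variant, for instance a projection incorporating an extra real coding a witness to the $\Pi_1$-definition of $\nu$, adjusting the rank definition correspondingly.

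For the lower bound, I would adapt the argument of Proposition \ref{no short ranks} to the $\Sigma^1_2$ setting. Suppose, toward a contradiction, that $(\sqsubseteq,\sqsubset)$ is a $\Sigma^1_2$ rank on $A$ of length $\delta<\sigma_\nu$. Consider the statement $\psi(\vec\alpha,\vec x,\gamma)$ asserting that $\vec\alpha$ is strictly increasing, $\vec x$ is strictly increasing in $\sqsubset$ with each $x_i\in A$, and $\otp(x_i)=\alpha_i$. This is naturally $\Sigma_2$ over $L_{\omega_1}$, so witnesses live in $L_{\tau^\delta}$. Collapse $\delta$ to $\omega$ and extract a cofinal sequence $\vec u$ in the rank exactly as in Proposition \ref{no short ranks}; the key consequence is the $\Sigma^1_2(\vec u)$-definition $x\in A \Longleftrightarrow \exists i\in\omega\ x\sqsubseteq u_i$, inducing a wellfounded $\Sigma^1_2(\vec u)$ relation on $A$.

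Finally, I would apply the relativised form of Proposition \ref{Kunen-Martin for Sigma12} together with the $\Sigma^1_2$ analogue of the boundedness lemma to conclude that the order types realised in $A$ are bounded strictly below $\sigma_\nu$. Since $A$ contains codes for cofinally many $\nu$-indices below $\sigma_\nu$ by the construction in Theorem \ref{lower bound for lengths of Pi11 ranks}, this yields the desired contradiction.

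The main obstacle is arranging that the relativised bound is strictly below $\sigma_\nu$ and not merely below $\tau^{\vec u}$ or $\tau$. In the $\Pi^1_1$ case, admissibility of $\sigma_\nu$ sufficed because only $\omega_1^{\ck,\vec u}$ needed to be controlled; in the $\Sigma^1_2$ case, the naive bound is $\tau^{\vec u}$, which can exceed $\sigma_\nu$. I would address this by exploiting that $\nu$ is $\Pi_1$-definable: the entire argument can be reformulated so that the relevant statement, after absorbing the $\Pi_1$-definition of $\nu$ as a parameter, is effectively $\Sigma_1$ over $L_{\omega_1}$ with parameters below $\nu$, and hence reflects into $L_{\sigma_\nu}$ via the definition of $\sigma_\nu$ and Lemma \ref{Sigma1 elementarity of Ltau}. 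With this reformulation, $\vec u$ can be chosen inside $L_{\sigma_\nu}$, restoring the $\sigma_\nu$-bound exactly as in the $\Pi^1_1$ proof.
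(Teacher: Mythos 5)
There is a genuine gap, and it is at the very first step: the underlying set is chosen wrongly, and the lower bound is simply false for it. Your set $A$ is $\Pi^1_1$, hence $\Delta^1_2$, and every $\Delta^1_2$ set admits a $\Sigma^1_2$-rank of length $1$: take the trivial prewellorder with a single class and witness it by $x\sqsubseteq y:\Leftrightarrow x\in A$ and $x\sqsubset y:\Leftrightarrow x\in A\wedge y\notin A$; both relations are $\Sigma^1_2$, left agreement and overspill are immediate. (The paper makes exactly this observation right after Proposition \ref{Borel Pi11 sets and countable ranks}.) So no adaptation of Proposition \ref{no short ranks} can establish that $A$ admits no short $\Sigma^1_2$-rank. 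The reason the adaptation must fail is also visible in your own sketch: the engine of Proposition \ref{no short ranks} is that a short $\Pi^1_1$-rank yields a $\Sigma^1_1(\vec u)$ definition of $A$, which is \emph{new} information about a $\Pi^1_1$ set and contradicts $\Sigma^1_1$-boundedness. A short $\Sigma^1_2$-rank only yields a $\Sigma^1_2(\vec u)$ definition of $A$, which for a $\Pi^1_1$ set is no information at all; and there is no $\Sigma^1_2$ analogue of boundedness below $\omega_1^{\ck,\vec u}$ to invoke (the best available is Proposition \ref{Kunen-Martin for Sigma12}, whose bound $\tau$ is useless here, as you partly noticed).

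The correct move — which your hedge about ``a projection incorporating an extra real'' gestures at but does not carry out — is to replace $A$ by a set that is properly $\Sigma^1_2$. The paper takes $\WO_{{<}\sigma_\nu}=\{x\in\WO\mid \exists y\in A\ x\sqsubseteq y\}$, where $\sqsubseteq$ comes from the standard rank on $\WO$; the restriction of the order-type rank to this set has length exactly $\sigma_\nu$. For the lower bound one argues differently from Proposition \ref{no short ranks}: a hypothetical $\Sigma^1_2$-rank of length $\alpha<\sigma_\nu$ with relation $\unlhd$ would make ``$x\in\WO$ and there is a $\unlhd$-increasing $\alpha$-sequence below $x$'' a $\Sigma^1_2$ (in a code for $\alpha$) definition of $\WO_{{\geq}\sigma_\nu}$. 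Passing to a $\Col(\omega,\alpha)$-generic extension $L_{\sigma_\nu}[g]$ with $g\in V$, Shoenfield absoluteness and homogeneity give $\exists x\,\psi(x)$ in $L_{\sigma_\nu}[g]$, whence $\Sigma^1_1$-correctness of $L_{\sigma_\nu}[g]$ in $V$ produces a real in $\WO_{{\geq}\sigma_\nu}\cap L_{\sigma_\nu}[g]$ — impossible since $L_{\sigma_\nu}[g]$ is admissible. So the witness set, the witnessing relations, and the contradiction mechanism all differ from what you propose; the upper-bound half of your argument is the only part that survives, and it survives only for a set that cannot satisfy the lower bound.
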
 
\begin{proof} 
Let $A$ denote the set defined relative to $\nu$ in the beginning of Section \ref{section - the lower bound} and suppose that $\sqsubseteq$ comes from the standard rank on $\WO$. 
Then 
$$\WO_{{<}\sigma_\nu}= \{ x\in \WO \mid \exists y\in A\ x\sqsubseteq y \} $$ 
is a $\Sigma^1_2$ set. 
It is easy to see that the restriction of the standard rank on $\WO$ to $\WO_{{<}\sigma_\nu}$ is a $\Sigma^1_2$-rank on $\WO_{{<}\sigma_\nu}$ of length $\sigma_\nu$. 
It remains to show that $\WO_{{<}\sigma_\nu}$ does not support a $\Sigma^1_2$-rank of length $\alpha<\sigma_\nu$. 
Aiming for a contradiction, suppose that there exists such a rank with relation $\unlhd$. 
Then $x\in \WO_{{\geq}\sigma_\nu}$ if and only if 
\begin{quote} 
``\emph{$x\in \WO$ and there exists a sequence $\langle x_i \mid i<\alpha\rangle$ of reals, strictly increasing in the rank, with $\forall i<\alpha\ x_i \unlhd x$}.'' 
\end{quote} 
This statement $\psi(x)$ is $\Sigma^1_2$ in any real coding $\alpha$. 
For any $\Col(\omega,\alpha)$-generic filter $G$ over $V$, 
$\exists x\ \psi(x)$ holds in $V[G]$ and $L[G]$ by Shoenfield absoluteness. 
Since $L_{\sigma_\nu}[G]\prec_1L[G]$, this holds in $L_{\sigma_\nu}[G]$. 
Then $\exists x\ \psi(x)$ also holds in $L_{\sigma_\nu}[g]$ for any $\Col(\omega,\alpha)$-generic filter over $L_{\sigma_\nu}$ in $V$ by homogeneity of $\Col(\omega,\alpha)$. 
Note that every $\Sigma^1_2$ statement about a real in $L_{\sigma_\nu}[g]$ holds in $V$, since $L_{\sigma_\nu}[g]$ is $\Sigma^1_1$-correct in $V$. 
Since $\exists x\ \psi(x)$ holds in $L_{\sigma_\nu}[g]$, it follows that $\WO_{{\geq}\sigma_\nu}$ contains a real in $L_{\sigma_\nu}[g]$. 
But any real in $\WO\cap L_{\sigma_\nu}[g]$ has order type less than $\sigma_\nu$, since $L_{\sigma_\nu}[g]$ is admissible. 
\end{proof}

\subsection{Beyond $\tau$}
\label{section - beyond tau} 

We now prove the results described in Figure \ref{figure length of pwos}. 

\begin{remark} 
The (strict) supremum of lengths of countable strict $\Sigma^1_1$ prewellorders is $\omega_1^\ck$ by the effective version of the Kunen-Martin theorem. (This is immediate from the proof of \cite[Theorem 31.1]{kechris2012classical}.) 
It follows that the  suprema of lengths of $\Delta^1_1$ prewellorders on $\Delta^1_1$ sets and $\Pi^1_1$ prewellorders on $2^\omega$ are $\omega_1^\ck$. 
\end{remark} 

\begin{remark} 
Every $\Pi^1_1$-rank on a $\Pi^1_1$ set $A$ induces a $\Sigma^1_1$ prewellorder on $2^\omega$ by letting $x\leq y$ if $x\sqsubseteq y$ or $x,y\notin A$, where $\sqsubseteq$ comes from the rank. 
It also induces a $\Pi^1_1$ prewellorder on $A$. 
Hence both suprema are at least $\tau$. 
It follows that the suprema of all classes in the second column of Figure \ref{figure length of pwos} are at least $\tau$. 
They all equal $\tau$, since the length of strict $\Sigma^1_2$ prewellorders is bounded by $\tau$ by Proposition \ref{Kunen-Martin for Sigma12} and this class of prewellorders is more general than the others. 
\end{remark} 
 
\begin{proposition} 
\label{long Pi12 prewellorder in ZFC} 
There exists a $\Pi^1_2$ prewellorder $\leq$ on a $\Pi^1_2$ set $A$ of length $\tau$. 
In fact, both $\leq$ and $<$ can be chosen to be $\Pi^1_2$. 
\end{proposition}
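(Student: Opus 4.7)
The plan is to bundle a \emph{witness} for $\Pi_1$-definability into each element of the prewellorder, so that the membership condition stays $\Pi^1_2$ rather than being pushed up to $\Sigma^1_3$ by an existential real quantifier. Concretely, I would let $A$ consist of (codes of) triples $(e, w, x)$ such that $e$ is the G\"odel number of a $\Pi_1$-formula $\varphi_e(z)$, both $w$ and $x$ lie in $\WO$, $\alpha_x < \alpha_w$, and $\varphi_e$ defines $\alpha_w$ uniquely in $H_{\omega_1}$. Set $(e_1, w_1, x_1) \leq (e_2, w_2, x_2)$ iff both triples lie in $A$ and $\alpha_{x_1} \leq \alpha_{x_2}$, with $<$ defined analogously using strict comparison of order types.

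The key verification is that $A$ is $\Pi^1_2$. The only nontrivial clause is ``$\varphi_e$ defines $\alpha_w$ in $H_{\omega_1}$''. Its first half, ``$\varphi_e(\alpha_w)$ holds in $H_{\omega_1}$'', is $\Pi^1_1$ in $(e,w)$, since $\Pi_1$-formulas over $H_{\omega_1}$ translate to $\Pi^1_1$-formulas in codes. The uniqueness half reads ``for every $w' \in 2^\omega$: $w' \notin \WO$, or $\alpha_{w'} = \alpha_w$, or $\neg\varphi_e(\alpha_{w'})$ in $H_{\omega_1}$''. Each disjunct is $\Sigma^1_1$, so after the universal quantifier over $w'$ this clause is $\Pi^1_2$, and hence so is $A$. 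Comparing $\alpha_{x_1} \leq \alpha_{x_2}$ is $\Sigma^1_1$, so $\leq$ is a conjunction of two $\Pi^1_2$ memberships with a $\Sigma^1_1$ comparison, still $\Pi^1_2$; the same goes for $<$.

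For the length: if $(e, w, x) \in A$ then $\alpha_x < \alpha_w$ and $\alpha_w$ is $\Pi_1$-definable over $H_{\omega_1}$, so $\alpha_w < \tau$ since $\tau$ is the strict supremum of such ordinals by Section \ref{subsection Kechris ordinal}. Conversely, cofinality of $\Pi_1$-definable ordinals in $\tau$ furnishes, for any $\alpha < \tau$, some such $\beta > \alpha$, and choosing $e, w, x$ realising $\beta,\beta,\alpha$ produces a triple in $A$ whose third-coordinate rank is $\alpha$. The equivalence classes of $\leq$ are thus indexed exactly by the ordinals below $\tau$, giving length $\tau$. That $\leq$ is a prewellorder on $A$ is routine, since it is pulled back from $\leq$ on ordinals along $(e,w,x) \mapsto \alpha_x$.

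The main obstacle is the quantifier bookkeeping above; the crucial point is to avoid an existential real quantifier ``there exists a $\Pi_1$-definable ordinal above $\alpha_x$'', which would raise the complexity to $\Sigma^1_3$. Carrying the defining formula and wellorder inside the object keeps us at $\Pi^1_2$, which is precisely what allows the length to exceed the $\Sigma^1_2$ bound of Proposition \ref{Kunen-Martin for Sigma12}.
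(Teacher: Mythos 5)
Your overall strategy -- bundling a defining formula and a wellorder into each element so that no existential real quantifier is needed -- is the same idea the paper uses. But there is a genuine gap in the complexity computation for the clause ``$\varphi_e$ defines $\alpha_w$ uniquely in $H_{\omega_1}$''. A $\Pi_1$-formula over $H_{\omega_1}$ applied to $\alpha_w$ translates to a $\Pi^1_2$ condition on the code $w$, not a $\Pi^1_1$ one (the universal quantifier over elements of $H_{\omega_1}$ becomes a universal real quantifier over codes for wellfounded structures, and wellfoundedness of the hypothesis already costs $\Pi^1_1$); the paper itself records that $\Pi_1$ over $H_{\omega_1}$ corresponds to $\Pi^1_2$, not $\Pi^1_1$. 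Consequently the disjunct ``$\neg\varphi_e(\alpha_{w'})$ in $H_{\omega_1}$'' inside your uniqueness clause is $\Sigma^1_2$, not $\Sigma^1_1$, and after the universal quantifier over $w'$ the uniqueness clause is $\Pi^1_3$. So the set $A$ you define is only $\Pi^1_3$. The clause cannot simply be dropped: without uniqueness (or leastness), taking $\varphi_e$ to be a tautology puts codes for every countable ordinal into $A$ and the length becomes $\omega_1$.

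The paper's proof is engineered precisely around this obstacle. Each element of its version of $A$ carries a code $x$ for an ordinal $\alpha$ (an $L$-level), and ``leastness'' of the defined ordinal $\beta$ is asserted only \emph{from the viewpoint of $L_\alpha$}: ``$\beta$ is least with $L_\alpha\models\varphi(\beta)$'' is arithmetical in the code once $x,y\in\WO$ is secured, because it is a bounded, internal statement about the coded structure. The only genuinely $\Pi^1_2$ clause is the single positive assertion ``$L_{\omega_1}\models\varphi(\beta)$''. Downward absoluteness of $\Pi_1$-formulas then upgrades local leastness to genuine leastness: if $\varphi(\beta')$ held in $L_{\omega_1}$ for some $\beta'<\beta$, it would already hold in $L_\alpha$, contradicting the local minimality. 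If you replace your global uniqueness clause by this local-leastness-plus-one-global-$\Pi_1$-assertion pattern, the rest of your argument (the order comparison along $\alpha_x$, the upper bound via $\tau=\gamma$, and the lower bound via cofinality of $\Pi_1$-definable ordinals in $\tau$) goes through.
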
 
\begin{proof} 
Let $A$ denote the set of triples $(x,y,\varphi)$, where $x,y\in \WO$ and $\varphi(z)$ is a $\Pi_1$-formula with the properties: 
\begin{enumerate-(a)} 
\item 
\label{long Pi12 prewellorder 1} 
$x$ codes $\alpha$. 
\item 
\label{long Pi12 prewellorder 1a} 
$y\in L_\alpha$. 
\item 
\label{long Pi12 prewellorder 2} 
Given $\alpha$, $\beta$ is least with $L_\alpha \models \varphi(\beta)$. 
\item 
\label{long Pi12 prewellorder 3} 
Given $\beta$, $\alpha$ is least with \ref{long Pi12 prewellorder 2}. 
\item 
\label{long Pi12 prewellorder 4} 
$L_{\omega_1} \models \varphi(\beta)$, equivalently $L_{\omega_1} \models \forall \gamma>\beta\ L_\gamma\models \varphi(\beta)$.\footnote{More precisely, read this as a $\Pi_1$-definition in a natural number coding $\varphi$.}  
\end{enumerate-(a)} 
$A$ is $\Pi^1_2$, since \ref{long Pi12 prewellorder 4} is $\Pi^1_2$ and the remaining conditions are arithmetical. 
Define a prewellorder $\leq$ on $A$ by letting $(x,y,\varphi)\leq (u,v,\psi)$ if $\otp(y)\leq\otp(v)$. $<$ is defined similary. 
Both $\leq$ and $<$ are relative $\Sigma^1_1$ and $\Pi^1_1$ relations on $A$. 

We claim that the order type of these prewellorders is precisely $\tau$. 
To see that it is at most $\tau$, we show that $A\subseteq L_\tau$. 
For any $\Pi_1$-formula $\varphi$, there is at most one $x$ such that $(x,y,\varphi)\in A$ for some $y$. 
Moreover, the existence of some $x,y$ with $(x,y,\varphi)\in A$ is $\Sigma_2$ over $L_{\omega_1}$, so any such $x$ is in $L_\tau$. 
It follows that $y\in L_\tau$. 

To see that it is at least $\tau$, recall that $\tau$ equals the supremum of $\Pi_1$-definable ordinals over $L_{\omega_1}$. 
If $\beta$ is $\Pi_1$-definable by $\varphi(z)$, then \ref{long Pi12 prewellorder 2}-\ref{long Pi12 prewellorder 4} hold for the least $\alpha$ with \ref{long Pi12 prewellorder 2}. 
Since $y$ may code any ordinal below $\alpha$, $\otp(\leq)$ is at least $\beta$. 
\end{proof} 

Using the previous construction, one can easily find countable (strict) $\Pi^1_2$ prewellorders in $\Pi^1_2$ sets of length $\tau+\tau$, $\tau\cdot\tau$ and more. 
However, we do not know if it is provable in $\ZFC$ that the supremum of $\Pi^1_2$ prewellorders on $2^\omega$ is larger than $\tau$. 
In particular, this is open in $L$. 

\begin{remark} 
The supremum of countable strict $\Pi^1_2$ prewellorders on $2^\omega$ is highly variable in forcing extensions of $L$.  
For instance, for any ordinal $\alpha\geq 1$, there is a generic extension 
of $L$ where $\omega_1$ is preserved and 
there is a $\Sigma^1_2$ prewellorder of length precisely $\alpha$. 
To see this, one realises $\alpha$ as the order type of the constructibility degrees of reals in an iteration of Sacks forcing (see \cite[Lemma 6]{miller1983mapping}). 
\end{remark} 

We now utilise large cardinals to say more about this supremum. 
Assume that $0^\#$ exists \cite[Section 18]{Je03}. 
Note that $0^\#$ exists if and only if for some limit ordinal $\lambda$, $(L_\lambda,\in)$ has an uncountable set of order indiscernibles \cite[Corollary 18.18]{Je03} or equivalently, there is a nontrivial elementary embedding $j\colon L\rightarrow L$ \cite[Theorem 18.20]{Je03}.  
Let $\iota_0$ denote the first Silver indiscernible. 
This equals the least critical point of any elementary embedding $j\colon L\rightarrow L$. 

%
%
%

\begin{proposition} 
Assuming $0^\#$ exists, there exist (strict) $\Pi^1_2$ prewellorders on countable $\Pi^1_2$ sets of length strictly above $\iota_0$. 
\end{proposition}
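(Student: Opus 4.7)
The plan is to construct a countable $\Pi^1_2$ set $A$ together with a $\Pi^1_2$ prewellorder whose length exceeds $\iota_0$ by exploiting that $0^\#$ is a $\Pi^1_2$-singleton. Since $0^\#$ encodes the full theory of $L$ with Silver indiscernibles, it gives access to ordinals in the Skolem hull of $\{\iota_0,\iota_1,\dots\}$ in $L$, and in particular to ordinals strictly above $\iota_0$, even though these ordinals are not definable in $L$ without parameters.

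I would adapt the construction of Proposition \ref{long Pi12 prewellorder in ZFC} by allowing the $\Pi_1$-formula $\varphi$ to mention a real parameter $w$, and insisting that $w$ be $0^\#$. Concretely, let $A$ consist of quadruples $(x,y,\varphi,w)$ coded as a single real so that: $\varphi(z,w)$ is a $\Pi_1$-formula; $w$ satisfies the $\Pi^1_2$-defining condition $\psi_0(w)$ for $0^\#$; $x\in\WO$ codes $\alpha$ and $y\in\WO$ codes $\beta<\alpha$; in $L_\alpha$, $\beta'$ is least with $L_\alpha\models\varphi(\beta',w)$, and $\alpha$ is least with this property; $L_{\omega_1}\models\varphi(\beta',w)$; and $\otp(y)\leq\beta'$. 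Define the prewellorder by $\otp(y)$, as in Proposition \ref{long Pi12 prewellorder in ZFC}. Because $w$ is forced to equal the unique $0^\#$, the ``parameter'' $w$ is really just $0^\#$, and the ordinals $\beta'$ realized in this way are exactly those $\Pi_1$-definable in $L$ with parameter $0^\#$.

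To see the length exceeds $\iota_0$, recall that $\iota_0$ itself is $\Sigma_1$-definable in $L$ from $0^\#$ (as the first Silver indiscernible, which $0^\#$ decodes); therefore any ordinal of the form $\iota_0+\gamma$ or $t^L(\iota_0,\iota_1,\dots,\iota_k)$ for a Skolem term $t$ is likewise definable from $0^\#$ in this sense, since $0^\#$ also decodes $\iota_1,\iota_2,\dots$. This supplies ordinals $\Pi_1$-definable from $0^\#$ unboundedly below $\omega_1^V$, so the supremum of lengths achieved strictly exceeds $\iota_0$. The resulting prewellorder (and its strict version) are $\Pi^1_2$ relations on $A$ by direct analogy with Proposition \ref{long Pi12 prewellorder in ZFC}.

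The main obstacle is the complexity verification for $A$: combining the $\Pi^1_2$ condition ``$w=0^\#$'' with a $\Pi_1$-condition over $L_{\omega_1}$ involving $w$ naively yields a $\Pi^1_3$ formula. The essential point to check is that, because $\{0^\#\}$ is a $\Pi^1_2$-singleton and membership in $A$ \emph{includes} the parameter $w$ in the tuple (rather than existentially quantifying it away), one need only impose $\psi_0(w)$ as a $\Pi^1_2$-side condition and then apply the arithmetical/$\Pi^1_2$ conditions in the parameter $w$; thus the total complexity stays at $\Pi^1_2$. This is the delicate step, and it is crucial that the parameter $w$ is made part of the element of $A$, which is why we take quadruples rather than triples. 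Once this is in place, the rest of the argument---verifying left agreement and overspill for the rank, and bounding/exceeding $\iota_0$---follows the pattern of Proposition \ref{long Pi12 prewellorder in ZFC}.
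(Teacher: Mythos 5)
Your overall strategy -- relativise the length-$\tau$ construction of Proposition \ref{long Pi12 prewellorder in ZFC} to the parameter $0^\#$, absorbing the $\Pi^1_2$-singleton condition ``$w=0^\#$'' into the tuple so that the complexity stays at $\Pi^1_2$ -- is sound in spirit and genuinely different from the paper's proof. (The paper instead uses the $\Pi^1_2$-definable canonical code $(x,y)$ for the structure $M_0^\#=(L_\alpha,\in,U)$ and takes the set of triples $(x,y,n)$ with $n$ an ordinal of the coded structure, ordered by the coded ordering; the length is then $\Ord^{M_0^\#}>\iota_0=\mathrm{crit}(U)$, and countability is immediate because $x$ and $y$ are unique.) Your resolution of the complexity worry is correct: a conjunction of $\Pi^1_2(w)$ conditions in which $w$ is a coordinate of the input real is $\Pi^1_2$.

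However, as written your set $A$ is not countable, and this is the crux of the statement. You let $y$ range over \emph{all} reals in $\WO$ with $\otp(y)\leq\beta'$; every infinite ordinal has continuum many codes, so $A$ has size $2^{\aleph_0}$. The analogous clause in Proposition \ref{long Pi12 prewellorder in ZFC} is exactly the constructibility requirement $y\in L_\alpha$, which you dropped -- and you cannot simply restore it, because $\iota_0$ is inaccessible in $L$, so $L$ contains no codes for ordinals in $[\omega_1^L,\iota_0)$ and the resulting prewellorder would have length at most $\omega_1^L<\iota_0$, defeating the purpose. The repair is to require $y\in L_{\alpha}[w]$ (and likewise to evaluate your $\Pi_1$-formulas in $L_{\alpha}[w]$ and $L_{\omega_1}[w]$ rather than in $L_\alpha$ and $L_{\omega_1}$, which is also needed for the clauses ``$L_\alpha\models\varphi(\beta',w)$'' to make literal sense, since $0^\#\notin L$). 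One must then check that codes for all ordinals up to $\iota_0$ appear at countable levels of $L[0^\#]$ -- true, because the canonical code for $M_0^\#$ is recursive in $0^\#$ and already yields a wellorder of $\omega$ of type $\Ord^{M_0^\#}>\iota_0$ -- and that $\iota_0$ is appropriately $\Pi_1$-definable over $L_{\omega_1}[0^\#]$ from $0^\#$. A smaller point: your claim that ordinals definable from $0^\#$ are ``unboundedly below $\omega_1^V$'' is false (there are only countably many such ordinals, hence they are bounded), but it is also unnecessary -- realising $\iota_0$ itself already gives length strictly above $\iota_0$.
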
 
\begin{proof} 
The existence of $0^\#$ is equivalent to the existence of an iterable structure $(L_\alpha,\in,U)$ such that $(L_\alpha,\in)$ is a model of $\ZFC^-$ and $U$ is amenable to $L_\alpha$, i.e. $x\cap U\in L_\alpha$ for all $x\in L_\alpha$ \cite[Definition 10.37, Theorem 10.39 \& Corollary 10.44]{Sch14}. 
Iterable means that all countable (and thus all) iterated ultrapowers are wellfounded. 
This structure is unique if we additionally assume that it is the $\Sigma_1$-Skolem hull of the empty set in itself \cite[Corollary 10.36]{Sch14}. 
Denote this structure by $M_0^\#=(L_\alpha,\in,U)$. 
The first Silver indiscernible $\iota_0$ equals the critical point of $U$ \cite[Corollary 10.44]{Sch14}. 

There exists a $\Sigma_1$-definable surjection $f\colon\omega\rightarrow L_\alpha$ over $M_0^\#$, since $M_0^\#$ is the $\Sigma_1$-Skolem hull of the empty set. 
The pair of $\in_f:=\{(m,n)\mid f(m) \in f(n) \}$ and $U_f:= f^{-1}[U]$ 
is a \emph{canonical code} for $M_0^\#$ that can be defined by a $\Pi^1_2$-formula $\varphi(x,y)$. 

Using this, we define a $\Pi^1_2$ prewellorder as follows. 
The underlying set consists of all triples $(x,y,n)\in 2^\omega\times 2^\omega\times \omega$ such that $\varphi(x,y)$ holds and $(\omega,x)\models ``n$ is an ordinal''. 
Recall that $x$ and $y$ are unique. 
Let $(x,y,m)\leq (x,y,n)$ if $(\omega,x)\models ``m\leq n"$. 
$<$ is defined similarly. 
Their lengths equal the height of $M_0^\#$ and are thus strictly above $\iota_0$. 
\end{proof} 

It remains to consider countable $\Pi^1_2$ prewellorders on $2^\omega$. 

\begin{proposition} 
Assuming $0^\#$ exists, there exists a strict $\Pi^1_2$ prewellorder on $2^\omega$ of countable length strictly above $\iota_0$. 
\end{proposition}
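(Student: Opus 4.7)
The plan is to extend the strict $\Pi^1_2$ prewellorder $<$ on the countable $\Pi^1_2$ set $A$ constructed in the proof of the previous proposition to a strict $\Pi^1_2$ prewellorder $\prec$ on all of $2^\omega$. Recall that $A$ consists of triples $(x,y,n)$ coded as reals via a pairing function, where $\varphi(x,y)$ holds and $(\omega,x)\models$ ``$n$ is an ordinal'', and that $<$ compares $(x,y,m)$ to $(x,y,n)$ by $m<n$ in the sense of $x$. The crucial feature I will exploit is that $\varphi$ defines a unique pair $(x^*,y^*)$; once any real $w$ is seen to lie in $A$, its first two coordinates are forced to be this canonical pair.

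My definition places every real outside $A$ into a single equivalence class at the \emph{bottom}. Writing $x_z,y_z,n_z$ for the arithmetic projections of a real $z$, I set
\[
z\prec w \ \Longleftrightarrow\ \varphi(x_w,y_w)\wedge\bigl((\omega,x_w)\models n_w\text{ is an ordinal}\bigr)\wedge \beta(z,w),
\]
where
\[
\beta(z,w)\ \Longleftrightarrow\ \neg P(z,w)\ \vee\ \bigl((\omega,x_w)\models n_z<n_w\bigr),
\]
and $P(z,w)$ is the arithmetic statement ``$x_z=x_w\wedge y_z=y_w\wedge (\omega,x_w)\models n_z$ is an ordinal''. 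Since $\varphi$ is $\Pi^1_2$ and everything else is arithmetic in $z,w$, the relation $\prec$ is $\Pi^1_2$.

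The three key verifications are: (i) if $w\in A$ then, by uniqueness of the canonical code, $P(z,w)$ is equivalent to $z\in A$, so $\beta(z,w)$ reduces to ``$z\notin A$ or $z<_A w$''; (ii) if $w\notin A$ then the first conjuncts fail, so nothing is $\prec$-below $w$; (iii) if $z\in A$ and $w\notin A$, applying the same uniqueness argument to $\beta(w,z)$ yields $w\prec z$. Together these show that reals outside $A$ form a single equivalence class lying below the $<_A$-ordered reals in $A$, so $\prec$ is a wellfounded strict linear quasiorder on $2^\omega$ of length $1+\mathrm{ht}(M_0^\#)>\iota_0$. Irreflexivity, transitivity and totality as a strict quasiorder then follow by routine case analysis.

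The main subtlety, and essentially the only non-routine step, is the complexity calculation. The naive extension ``$z\in A\wedge (w\notin A\vee z<_A w)$'', which puts non-$A$ reals on top, has complexity only $\Delta^1_3$ because ``$w\notin A$'' is $\Sigma^1_2$. Putting the non-$A$ class at the bottom instead makes the outermost condition ``$w\in A$'' (which is $\Pi^1_2$), and the uniqueness of the canonical code of $M_0^\#$ lets me replace the residual ``$z\in A$''-style clauses by arithmetic ones in the parameters extracted from $w$, keeping the whole relation in $\Pi^1_2$.
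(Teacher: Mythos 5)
Your construction is correct, but it takes a genuinely different route from the paper. The paper does not extend the prewellorder from the preceding proposition at all; instead it sets up a $\Pi^1_1$ (hence $\Delta^1_2$) domain consisting of reals coding sound, minimal, amenable structures $(L_\gamma,\in,U,\alpha)$ \emph{without} requiring iterability, and compares two such structures by a universally quantified condition over their successful countable coiterations. Under that definition the non-iterable structures automatically collect into a single top class, the relation is $\Pi^1_2$ because iterability and the coiteration clause are $\Pi^1_2$, and the complement of the $\Delta^1_2$ domain can then be adjoined as one further class without raising the complexity. Your proof instead keeps the $\Pi^1_2$ domain $A$ from the previous proposition and defuses the complexity problem (which the paper explicitly flags in a footnote as the reason for its indirect definition) by two observations: placing the non-$A$ reals at the \emph{bottom} so that the only membership test occurring is the positive, $\Pi^1_2$ test on the upper element $w$, and using the uniqueness of the canonical code of $M_0^\#$ to replace the residual test ``$z\in A$'' by the arithmetic predicate $P(z,w)$ computed from $w$'s coordinates. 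I checked the case analysis (transitivity of the induced non-strict order, asymmetry, wellfoundedness via descent in $\mathrm{Ord}^{M_0^\#}$) and it goes through; the only point worth making explicit is that the decoding $z\mapsto(x_z,y_z,n_z)$ should be a total recursive retraction so that every real decodes, with improperly coded reals falling into the bottom class. Your argument is the more elementary of the two, avoiding coiteration entirely; the paper's comparison-based definition is the one that is more readily portable to settings (e.g.\ $M_1$ at the next projective level, as suggested in the conclusion) where one wants the domain itself to remain $\Pi^1_1$ rather than $\Pi^1_2$.
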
 
\begin{proof} 
It suffices to define such a prewellorder with a $\Delta^1_2$ domain, since we can extend this by adding the complement as a single equivalence class. 
In fact, the domain will be $\Pi^1_1$. 
It consists of all reals coding structures of the form $\MM=(M,\in,U,\alpha)$, where 
$(M,\in)$ is a wellfounded model of $\ZFC^-$, $U$ is amenable to $M$,  $M=L_\gamma$ for some $\gamma$, $\gamma$ is minimal with these properties (i.e. for all $\beta<\gamma$, $(L_\beta,\in,U)$ does not satisfy all the previous conditions), $(L_\gamma,\in,U)$ is sound (i.e., the Skolem hull of the empty set), 
and $\alpha\in \Ord^M$. 
We then write $\alpha_\MM=\alpha$. 
Note that the domain is nonempty, since $M_0^\#$ is such a structure. 

An \emph{iteration} of $\MM$ is a sequence of iterated ultrapowers\footnote{I.e., standard internal ultrapowers together with a predicate for an ultrafilter using amenability.}  of $\MM$ where each model is wellfounded. 
If $\MM_\gamma=(M_\gamma,\in,U_\gamma,\delta)$ is an iterate of $\MM$, write $\MM_\gamma^*=(M_\gamma,\in,U_\gamma)$. 
A \emph{coiteration} of $\MM$ and $\NN$ is a pair of iterations of these structures as defined in \cite[Section 4.4]{zeman2011inner}. 
A coiteration is called \emph{successful} if it has final models $\MM_\gamma$ and $\NN_\gamma$ with $\MM_\gamma^* \unlhd \NN_\gamma^*$\footnote{I.e., $\MM_\gamma^*$ is a (not necessarily proper) initial segment of $\NN_\gamma^*$.} or $\NN_\gamma^*\unlhd \MM_\gamma^*$. 

If $\MM$ is iterable but $\NN$ is not, then there is no successful coiteration of $\MM$ and $\NN$. 
To see this, suppose that the final models are $\MM_\gamma$ and $\NN_\gamma$. 
If $\NN_\gamma^*\unlhd \MM_\gamma^*$, then $\NN$ would be iterable. 
If $\MM_\gamma^*\lhd \NN_\gamma^*$, then $\NN_\gamma^*$ and thus also $M_0^\#$ has a proper initial segment with the same theory. 
This contradicts the minimality of $M_0^\#$. 

If $\MM$ and $\NN$ are both iterable, we claim that none is moved in the coiteration. 
Thus $\MM^*=\NN^*=M_0^\#$. 
To see this, note that at most one is moved by a general fact. 
Towards a contradiction, suppose the final models are $\MM_\gamma\unlhd\NN_\gamma$ and $\MM$ is moved. 
If $\MM_\gamma=\NN_\gamma=\NN$, then $\NN$ would not be sound, as $\MM$ is moved. 
If $\MM_\gamma\lhd\NN_\gamma=\NN$, then $\NN$ would not be minimal. 

Let $\MM< \NN$ if $\MM$ is iterable, i.e. all its countable iterated ultrapowers are wellfounded, and for every successful countable coiteration of $\MM$ and $\NN$ with final models $\MM_\gamma$, $\NN_\gamma$ and iteration maps $\pi\colon M\rightarrow M_\gamma$, $\nu\colon N\rightarrow N_\gamma$, we have 
$\MM_\gamma^*=\NN_\gamma^*$ and $\pi(\alpha_\MM)\leq \nu(\alpha_\NN)$. 
Then $\MM<\NN$ holds if and only if: 
\begin{itemize} 
\item 
$\MM^*=\NN^*=M_0^\#$ and $\alpha_\MM<\alpha_\NN$, or 
\item 
$\MM^*=M_0^\#$ but $\NN$ is not iterable.\footnote{We work with the above indirect definition, since a case distinction according to iterability is too complex for a $\Pi^1_2$ definition.} 
\end{itemize} 
Therefore, the length of $<$ equals $\Ord^{M_0^\#}+1$ and is thus strictly above $\iota_0$. 
\end{proof} 



\subsection{Sets with countable ranks} 
\label{section - sets w ctbl ranks} 

We aim to characterise sets that support a countable rank. 
We shall use this to calculate the supremum of lengths of ranks on co-countable $\Sigma^1_2$ sets in the next section. 
For $\Pi^1_1$ sets, this is easy: 

\begin{proposition} 
\label{Borel Pi11 sets and countable ranks} 
A $\Pi^1_1$ set admits a countable $\Pi^1_1$-rank if and only if it is Borel. 
In fact, all $\Pi^1_1$-ranks on $\Pi^1_1$ Borel sets are countable. 
\end{proposition}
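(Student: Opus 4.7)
My plan is to use the canonical choice of rank witnesses $\sqsubseteq$ and $\sqsubset$ from Remark~\ref{canonical choice for ranks}, then apply the Kunen-Martin theorem in one direction and produce a Borel decomposition into layers in the other. Throughout I fix a $\Pi^1_1$ set $A$ and a $\Pi^1_1$-rank $\leq$ on $A$ with canonical witnesses $\sqsubseteq$ and $\sqsubset$.

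The crucial preliminary observation I would establish is that for any $y\in A$, the sets $\{x : x\sqsubseteq y\}$ and $\{x : x\sqsubset y\}$ are Borel. Each is $\Pi^1_1$ by assumption; for the $\Sigma^1_1$ description, I would verify by a short case split on whether $x\in A$, using the canonical definitions together with the overspill property, that for $y\in A$ one has $x\sqsubseteq y \Leftrightarrow \neg(y\sqsubset x)$ and $x\sqsubset y \Leftrightarrow \neg(y\sqsubseteq x)$ for every $x\in 2^\omega$. The right-hand sides are $\Sigma^1_1$, giving both initial segments a $\Delta^1_1$ description.

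From this, the direction ``countable $\Pi^1_1$-rank $\Rightarrow$ Borel'' is immediate: if $\leq$ has countable length $\alpha$, pick $y_\beta\in A_\beta:=\{x\in A : \rank(x)=\beta\}$ whenever this set is nonempty. Since $\{x : x\sqsubseteq y_\beta\}\subseteq A$ by the canonical choice, we get $A_\beta=\{x : x\sqsubseteq y_\beta\}\setminus\{x : x\sqsubset y_\beta\}$, which is Borel by the observation; so $A=\bigcup_{\beta<\alpha}A_\beta$ is a countable union of Borel sets.

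For the converse ``Borel $\Rightarrow$ every $\Pi^1_1$-rank on $A$ is countable'', suppose additionally that $A$ is $\Sigma^1_1$. Then the strict order $<$ restricted to $A$, viewed as a subset of $(2^\omega)^2$, equals $\{(x,y): x\in A \wedge y\in A \wedge \neg(y\sqsubseteq x)\}$, which is $\Sigma^1_1$ since both $A$ and the complement of $\sqsubseteq$ are. This is a $\Sigma^1_1$ wellfounded relation whose rank equals the length $|{\leq}|$, so the classical Kunen-Martin theorem forces $|{\leq}|<\omega_1$. Combined with the rank property of $\Pi^1_1$ recalled after Definition~\ref{definition rank}, this also yields ``Borel $\Rightarrow$ admits a countable $\Pi^1_1$-rank'', completing the equivalence. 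The only mildly delicate step is the case-split establishing the dual $\Sigma^1_1$ descriptions of $\sqsubseteq$ and $\sqsubset$ from the canonical choice; after that the argument is a direct application of standard tools.
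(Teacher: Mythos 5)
Your proof is correct and follows essentially the same route as the paper's: the forward direction writes $A$ as a countable union of Borel layers (each $\Delta^1_1$ via the duality between $\sqsubseteq$, $\sqsubset$ and their converses on $A$), and the converse applies the Kunen--Martin theorem to the strict relation $x,y\in A\wedge x\sqsubset y$, which is $\mathbf{\Sigma}^1_1$ since $A$ is Borel. You merely make explicit the case-split verifying the dual $\Sigma^1_1$ descriptions, which the paper leaves implicit.
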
 
\begin{proof} 
A $\Pi^1_1$ set with a countable $\Pi^1_1$-rank is a countable union of Borel sets and thus itself Borel. 
Conversely, suppose that $A$ is a $\Pi^1_1$ Borel set and fix any $\Pi^1_1$-rank on $A$ with relations $\sqsubseteq$ and $\sqsubset$. 
Then the relation defined by $x,y\in A \wedge x\sqsubset y$ is a strict ${\bf\Sigma}^1_1$ prewellorder on $A$. 
By the Kunen-Martin theorem \cite[Theorem 31.1]{kechris2012classical}, the rank's length is countable. 
\end{proof} 

\begin{remark} 
Beyond $\Pi^1_1$, sets can support both countable and uncountable ranks. 
To see this, note that there exist $\Sigma^1_1$ prewellorders on $2^\omega$ of length $\omega_1$. 
If  $\Gamma$ is a class strictly above $\Pi^1_1$ with the rank property that is closed under continuous preimages, then 
any $\Gamma$ set that contains a computable copy of $2^\omega$ admits some uncountable $\Gamma$-rank. 
\end{remark} 

The forward implication of Lemma \ref{Borel Pi11 sets and countable ranks} fails just beyond $\Pi^1_1$. 
To see this, suppose that $\Gamma$ contains both $\Pi^1_1$ and $\Sigma^1_1$. 
Let $\Delta$ denote the class of sets that are both $\Gamma$ and $\bar{\Gamma}$. 
There exist $\Delta$ sets which are not Borel and any such set admits a $\Gamma$ rank of length $1$. 
We shall show that the reverse implication of Lemma \ref{Borel Pi11 sets and countable ranks} fails for $\Sigma^1_2$ sets. 
For instance, the complement of the singleton $0^\#$ does not admit a countable $\Sigma^1_2$ rank by Theorem \ref{characterisation countable ranks}. 


The failure of both implications suggests to study the relationship between the  properties for any $\Sigma^1_2$ Borel set $A$: 
\begin{itemize} 
\item 
$A$ admits a countable $\Sigma^1_2$ rank. 
\item 
$A$ has a \borel code in $L_\tau$. 
\end{itemize} 

We shall show that these properties are equivalent for co-countable $\Sigma^1_2$ in Theorem \ref{characterisation countable ranks} below, assuming $\Sigma^1_3$ Cohen absoluteness. 
(Note that this holds trivially for countable $\Sigma^1_2$ sets.) 

In the following, if $A$ is a definable set or relation, we will assume that $A$ is given by a fixed definition. 
It then makes sense to talk about the version of $A$ in a generic extension $V[G]$ of $V$. 
For instance, we write $A^{V[G]}$ for the set or relation with the same definition in $V[G]$.

\begin{lemma} 
\label{countable Pi12 in inner models} 
Suppose that $M$ is an admissible set 
with $M\prec_{\Sigma^1_2} V$ and $\gamma$ is countable in $M$. 
Let $A$ be a $\Pi^1_2$ set whose complement admits a countable $\Sigma^1_2$-rank of length $\gamma$. 
Then $A$ is $\Sigma^1_2$ in some real in $M$ in any outer model $W$ where $\sqsubseteq^W$ and $\sqsubset^W$ form a rank on $A^W$ of the same length as in $V$. 
This relativises to reals. 
\end{lemma}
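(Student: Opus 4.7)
The plan is to exhibit $A^W$ in $W$ as a $\Sigma^1_2$ set in a real parameter $c\in M$ that codes a wellorder of $\omega$ of order type $\gamma$. Such a $c$ is available since $\gamma$ is countable in the transitive admissible set $M$. The defining formula will exploit the rank's overspill condition (every element of $\bar A^W$ lies strictly $\sqsubset^W$-below every element of $A^W$) together with left agreement, which prevents long $\sqsubset^W$-chains from appearing beneath elements of $\bar A^W$.

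Explicitly, I will propose the formula
\[
\psi(y,c)\;\equiv\;\exists \vec x\in (2^\omega)^\omega\bigl(\forall (i,j)\in c\; x_i\sqsubset x_j \;\wedge\; \forall i\in\omega\; x_i\sqsubset y\bigr),
\]
where $\sqsubset$ denotes the fixed $\Sigma^1_2$ defining relation, evaluated in $W$. Coding $\vec x$ as a single real and using that $\Sigma^1_2$ is closed under uniform countable conjunction, the two inner conjuncts are $\Sigma^1_2$ in $\vec x, y, c$, so $\psi$ is $\Sigma^1_2(c,y)$, absorbing any real parameter already present in $\sqsubset$ into $c$ to handle the relativised version.

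To verify $W\models\psi(y,c)\Leftrightarrow y\in A^W$, I argue both directions from the hypothesis that $\sqsubseteq^W,\sqsubset^W$ form a rank on $\bar A^W$ of length $\gamma$. For $(\Leftarrow)$, if $y\in A^W$, the hypothesis supplies in $W$ a strictly $<^W$-increasing sequence of length $\gamma$ within $\bar A^W$; reindexing through $c$ yields $\vec x\in W$, which is $\sqsubset^W$-increasing along $c$ by left agreement, and satisfies $x_i\sqsubset^W y$ by overspill. For $(\Rightarrow)$, suppose $y\in\bar A^W$ had a witness $\vec x$. Left agreement at $y$ forces each $x_i\in\bar A^W$ with $x_i<^W y$ in rank, and then left agreement at each $x_j$ converts the $\sqsubset^W$-chain along $c$ into a $<^W$-chain; this produces a strictly increasing $\gamma$-sequence of ranks all bounded by $\mathrm{rank}(y)<\gamma$, a contradiction.

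The main obstacle is conceptual rather than technical: the length-$\gamma$ hypothesis must play a dual role, guaranteeing both the existence of the witnessing chain in $W$ for the $(\Leftarrow)$ direction and the boundedness of ranks for the $(\Rightarrow)$ direction. This is precisely why the lemma insists that $\sqsubseteq^W,\sqsubset^W$ still form a rank of the same length $\gamma$ as in $V$; without this, $W$ could be too sparse to contain a $\gamma$-long chain, or the rank on $\bar A^W$ could stretch above $\gamma$, spoiling the contradiction on the other side.
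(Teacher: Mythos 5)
Your proof is correct, and it reaches the conclusion by a slightly different route than the paper. The paper first uses Lemma \ref{correct ranks in inner models}\ref{correct ranks in inner models 1} to express ``there is a sequence $\langle x_\alpha\mid\alpha<\gamma\rangle$ in the complement of $A$ with $\rank(x_\alpha)\geq\alpha$'' as a $\Sigma^1_2$ statement about a real in $M$, invokes $M\prec_{\Sigma^1_2}V$ to pull such a cofinal sequence $\vec{x}$ into $M$, and then defines $x\in A\Longleftrightarrow\forall\alpha<\gamma\ (x_\alpha\sqsubseteq x)$ -- a countable conjunction of $\Sigma^1_2$ conditions over a \emph{fixed} chain coded by the parameter. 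You instead take as parameter only a code $c\in M$ for $\gamma$ and quantify existentially over the chain inside the formula, paying for this with the extra boundedness argument in your $(\Rightarrow)$ direction (no $\sqsubset$-chain of order type $\gamma$ can sit strictly below an element of $\bar A^W$ of rank ${<}\gamma$). What your version buys is economy of hypotheses: you never use $M\prec_{\Sigma^1_2}V$ or admissibility of $M$, only that $\gamma$ is countable in $M$; what the paper's version buys is a syntactically simpler (purely universal over a fixed sequence) definition and a parameter that directly witnesses cofinality, which transfers to $W$ by upward $\Sigma^1_2$ absoluteness of ``$\rank(x_\alpha)\geq\alpha$''. Two cosmetic points: in $\forall(i,j)\in c\ x_i\sqsubset x_j$ you should range over the \emph{strict} part of the wellorder coded by $c$ (otherwise you demand $x_i\sqsubset x_i$, which fails on $\bar A^W$ by left agreement), and in the $(\Leftarrow)$ direction you should note explicitly that choosing one representative of each rank below $\gamma$ inside $W$ uses countable choice in $W$, which is harmless for the outer models the lemma is applied to.
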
 
\begin{proof} 
``\emph{There exists a sequence $\langle x_\alpha\mid \alpha<\gamma\rangle$ in the complement of $A$ with $\rank(x_\alpha)\geq \alpha$ for all  $\alpha<\gamma$}'' is $\Sigma^1_2$ in a real in $M$ by Lemma \ref{correct ranks in inner models} \ref{correct ranks in inner models 1}. 
Since $M\prec_{\Sigma^1_2} V$, pick such a sequence $\vec{x}\in M$ and let $y\in M$ code $\vec{x}$. 
Let $\sqsubseteq$, $\sqsubset$ denote the relations of the rank. 
Then for all reals $x$: 
$$ x\in A \Longleftrightarrow \forall \alpha<\gamma\ (x_\alpha\sqsubseteq x). $$ 
This is $\Sigma^1_2$ in $y$. 
\end{proof} 

Note that for any countable $\Sigma^1_2$-rank on a $\Sigma^1_2$ set $A$ that remains a rank in an outer model $W$, its length also remains the same in $W$, since the statement ``\emph{$\langle x_\alpha \mid \alpha<\gamma\rangle$ is cofinal in the rank}'' is $\Pi^1_2$ in a real and hence absolute to $W$ by Shoenfield absoluteness. 


\begin{definition} 
\label{def stable ranks} 
Suppose that $A$ is a $\Sigma^1_2$ set and $\sqsubseteq$, $\sqsubset$ define a $\Sigma^1_2$ rank on $A$. 
We call this rank \emph{$\PP$-stable} if in all $\PP$-generic extensions $V[G]$ of $V$, $\sqsubseteq^{V[G]}$, $\sqsubset^{V[G]}$ form a rank on $A^{V[G]}$. 
\end{definition} 

Note that we do not require that $A^V=A^{V[G]}$ holds in $\PP$-generic extensions $V[G]$ of $V$. 
However, the length of any countable $\PP$-stable $\Sigma^1_2$-rank remains the same in any $\PP$-generic extension $V[G]$ by Shoenfield absoluteness. 
Moreover, note that every $\Sigma^1_2$ set admits a $\Sigma^1_2$-rank that is $\PP$-stable for all forcings $\PP$, since the proof of the existence of $\Sigma^1_2$-ranks shows that a fixed definition of the relations works and thus, these also form a rank in generic extensions. 

We shall now study countable $\Pi^1_2$ sets. 
Note that a countable set $A$ is an element of $L_\tau$ if and only if it has a Borel code in $L_\tau$ by the Mansfield-Solovay theorem \cite[Theorem 25.23]{Je03} and Lemma \ref{Sigma1 elementarity of Ltau}.\footnote{For $L_\alpha \prec_{\Sigma^1_1} L$, it suffices to know that $\alpha$ is a limit of admissibles.}  

\begin{lemma} 
\label{stable countable rank implies constructible} 
Suppose that $A$ is a countable $\Pi^1_2$ set whose complement admits a countable 
Cohen stable $\Sigma^1_2$-rank. 
If $\omega_1^L=\omega_1$, then $A\in L_\tau$. 
If $\omega_1^L$ is countable, then $A\in L_{\omega_1^L+1}$. 
Moreover, these $L$-levels are optimal. 
Furthermore, the same result holds for arbitrary countable $\Sigma^1_2$-ranks assuming for every $\gamma$ that is countable in $V$ but uncountable in $L$, there exists a $\Col(\omega,\gamma)$-generic filter over $L$ in $V$.\footnote{It is open whether this assumption is provable in $\ZFC$.}  
For instance, this is the case if $\omega_1$ is inaccessible in $L$. 
\end{lemma}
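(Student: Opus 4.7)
The plan is to apply Lemma \ref{countable Pi12 in inner models} inside a carefully chosen admissible set $M$ to extract a small real $y \in M$ such that $A$ is $\Sigma^1_2(y)$ in $V$, and then combine this with Mansfield--Solovay to force $A$ into the claimed $L$-level. The choice of $M$ and the concluding argument will depend on whether $\omega_1^L = \omega_1$ or $\omega_1^L$ is countable in $V$.

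First I would handle the case $\omega_1^L = \omega_1$ by taking $M = L_\tau$. The three hypotheses of Lemma \ref{countable Pi12 in inner models} are verified as follows: $L_\tau$ is admissible (indeed a limit of admissibles); $L_\tau \prec_{\Sigma^1_2} V$, which follows from $L_\tau \prec_1 L_{\omega_1}$ (Lemma \ref{Sigma1 elementarity of Ltau}) combined with $\Sigma^1_2$-correctness of $L_{\omega_1} = H_{\omega_1}^L$ in $V$ under $\omega_1^L = \omega_1$; and the rank length $\gamma$ is countable in $L_\tau$ because $\gamma < \tau$ by Proposition \ref{Kunen-Martin for Sigma12}, while the $\Sigma_1$-statement asserting the existence of a bijection $\omega \to \gamma$ reflects from $L_{\omega_1}$ to $L_\tau$. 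The lemma then yields $y \in L_\tau$ with $A$ being $\Sigma^1_2(y)$ in $V$. Mansfield--Solovay applied to the thin set $A$ places it in $L[y] = L$; to promote this to $A \in L_\tau$, I would bound the $L$-ranks of elements of $A$ below $\tau$ using that every element of $L_\tau$ is $\Sigma_2$-definable over $L_{\omega_1}$ (Lemma \ref{when the least Sigma2-stable is Sigma2-elementary}) and then use $\Sigma_1$-elementarity together with $\Sigma_1$-collection in the admissible $L_\tau$ to collect a $\Sigma_1$-definition of $A$ over $L_\tau$ into a set.

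For the case $\omega_1^L < \omega_1$ I would take $M = L_{\omega_1^L}$, which is admissible, contains all reals of $L$, and is $\Sigma^1_2$-correct in $V$ by Shoenfield. When $\gamma < \omega_1^L$ the lemma directly provides $y \in L_{\omega_1^L} \subseteq L$ with $A$ being $\Sigma^1_2(y)$; Mansfield--Solovay then gives $A \subseteq L$, so each element of $A$ has $L$-rank below $\omega_1^L$ and hence $A \subseteq L_{\omega_1^L}$; the $\Sigma_1$-translation of the $\Sigma^1_2(y)$ formula then defines $A$ over $L_{\omega_1^L}$, placing $A \in L_{\omega_1^L + 1}$. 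The subcase $\gamma \geq \omega_1^L$ is the main obstacle: here $\gamma$ is uncountable in $L$ and $M = L_{\omega_1^L}$ no longer sees $\gamma$ as countable. I would invoke Cohen stability to pass the rank into a Cohen extension where, via Shoenfield tree representations for the $\Sigma^1_2$-rank in $L_{\omega_1^L}$, the witnessing sequence $\vec x$ can still be reflected into $L_{\omega_1^L + 1}$, yielding the required $y$; the subsequent argument is as before. For the final ``furthermore'' clause, the extra hypothesis of $\Col(\omega,\gamma)$-generics over $L$ permits collapsing $\gamma$ to $\omega$ internally in $L$, reducing an arbitrary countable $\Sigma^1_2$-rank to the Cohen stable situation.

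Optimality I would handle by exhibiting standard examples: in Case 1, a $\Pi^1_2$ singleton whose $L$-rank approaches $\tau$ (built from the canonical $\Pi_1$-definable ordinals cofinal in $\tau$); in Case 2, reals such as (a version of) $0^\#$ whose natural $L$-rank is $\omega_1^L$. The hardest part of the whole plan is the $\gamma \geq \omega_1^L$ subcase of Case 2 on only the Cohen stability assumption, since Cohen forcing cannot itself lower $\gamma$ below $\omega_1^L$; the argument must instead exploit Cohen stability to work with a Shoenfield tree for the rank inside $L_{\omega_1^L}$ and reflect the required sequence back without enlarging the $L$-level of the final parameter.
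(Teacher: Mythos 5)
Your case $\omega_1^L=\omega_1$ is a workable alternative to the paper's route: there $\gamma<\tau<\omega_1^L$ by Proposition \ref{Kunen-Martin for Sigma12}, so $\gamma$ is countable in $L_\tau$, Lemma \ref{countable Pi12 in inner models} applied with $M=L_\tau$ and $W=V$ produces $y\in L_\tau$ with $A$ being $\Sigma^1_2(y)$, Mansfield--Solovay gives $A\subseteq L[y]=L$, and the $\Sigma_2$ statement $\exists\delta\,\forall x\,(x\in A\rightarrow x\in L_\delta)$ with parameter $y\in L_\tau$ reflects by Tarski's test to yield $A\subseteq L_\delta$ for some $\delta<\tau$, whence $A$ is definable over $L_{\sigma_\delta}$ and $A\in L_{\sigma_\delta+1}\subseteq L_\tau$. (The paper instead packages boundedness into a parameter-free $\Sigma_2$ forcing statement; your route does not even use Cohen stability in this case, which only strengthens the conclusion there. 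Do note that the boundedness step is essential and must go through Tarski's test with the parameter $y$ --- knowing merely that each element of $A$ lies in $L_\tau$ does not suffice, since $\cof(\tau)=\omega$ in $L$.)

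The genuine gap is exactly where you flag it: $\omega_1^L$ countable and $\gamma\geq\omega_1^L$. Your stated obstruction --- that ``Cohen forcing cannot itself lower $\gamma$ below $\omega_1^L$'' --- rests on a misconception: since $\gamma$ is countable in $V$, the forcing $\Col(\omega,\gamma)$ and its square are countable atomless forcings and hence forcing-equivalent to Cohen forcing, so Cohen stability covers $\Col(\omega,\gamma)$-extensions, and in such an extension $\gamma$ \emph{is} countable over $L$. The idea you are missing is the paper's mutual-generics argument: take $G,H$ mutually $\Col(\omega,\gamma)$-generic over $V$; the rank persists to $V[G,H]$ with the same length; applying Lemma \ref{countable Pi12 in inner models} inside $V[G,H]$ with $M$ an admissible initial segment of $L[G]$ in which $\gamma$ is countable shows that $A=A^{V[G,H]}$ is $\Sigma^1_2$ in a real of $L[G]$, so $A\subseteq L[G]$ by Mansfield--Solovay; symmetrically $A\subseteq L[H]$, and mutual genericity gives $A\subseteq L[G]\cap L[H]=L$, after which definability over $L_{\omega_1^L}$ places $A$ in $L_{\omega_1^L+1}$. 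No ``Shoenfield-tree reflection'' substitute for this is supplied, and I do not see one. Separately, your optimality witness for this case is wrong: the complement of the singleton $0^\#$ admits no countable $\Sigma^1_2$-rank at all (Theorem \ref{characterisation countable ranks}), so no ``version of $0^\#$'' can serve; the paper's witness is the countable $\Pi^1_1$ set of $L$-least codes for countable $L$-levels, which has $L$-rank exactly $\omega_1^L+1$.
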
 
\begin{proof} 
We split the proof into two claims. 
\begin{claim*} 
$A\subseteq L$. 
\end{claim*} 
\begin{proof} 
We shall use that for any countable $\Sigma^1_2(x)$ set $B$, $B=B^W$ holds in all outer models $W$, 
since the statement ``\emph{every real in $B$ appears in $\vec{x}=\langle x_i \mid i\in\omega\rangle$}'' is $\Pi^1_2$ in $\vec{x}$ and $x$ and thus it is absolute to outer models by Shoenfield absoluteness. 

By Lemma \ref{countable Pi12 in inner models}, $A$ is $\Sigma^1_2$ in a real and the definition remains valid in Cohen generic extensions of $V$. 
Here we use that the rank is Cohen stable. 
By the previous remark, $A=A^{V[G]}$ for Cohen generic extensions of $V$. 
In particular, $A$ is countable in $V[G]$. 
Let $\gamma$ denote the length of the given rank. 
Let $G$ and $H$ be mutually $\Col(\omega,\gamma)$-generic filters over $V$. 
Note that  $\Col(\omega,\gamma)\times \Col(\omega,\gamma)$ is countable and hence forcing equivalent to Cohen forcing.\footnote{Forcings $\PP$ and $\QQ$ are called equivalent if their Boolean completions are isomorphic.} 
Thus $A=A^{V[G,H]}$ is countable in $V[G,H]$. 
By Lemma \ref{countable Pi12 in inner models}, $A$ is $\Sigma^1_2$ in a real in $L[G]$ and this definition of $A$ is valid in $V[G,H]$. 
Thus $A\subseteq L[G]$ by the Mansfield-Solovay theorem. 
Similarly $A\subseteq L[H]$ and thus $A\subseteq L[G]\cap L[H]=L$. 
\end{proof} 

If $\omega_1^L$ is countable, it follows that $A\in L_{\omega_1^L+1}$. 
Note that $\omega_1^L+1$ is optimal, since the countable $\Pi^1_1$ set of $L$-least codes for countable $L$-levels has $L$-rank $\omega_1^L+1$. 
It remains to show $A\in L_\tau$. 
Note that the proof of Lemma \ref{ranks on countable sets} \ref{ranks on countable sets 1} shows that $\tau$ is optimal as well. 

\begin{claim*}
$A\subseteq L_\delta$ for some $\delta<\tau$. 
\end{claim*} 
\begin{proof} 
Since the rank's length does not increase in Cohen generic extensions of $V$, the following statement $\psi(\gamma)$ holds in $V$ for some countable $\gamma$: 
\begin{quote} 
``\emph{There exists a sequence $\vec{x}=\langle x_i \mid i<\gamma\rangle$ such that $\mathbf{1}_{\Col(\omega,\gamma)}$ forces $\vec{x}$ to be cofinal in the rank on the complement of $A$.}'' 
\end{quote} 
This is a $\Sigma_2$-formula, since ``\emph{$\vec{x}$ is cofinal in the rank on $A$}'' is $\Pi^1_2$ and thus $\Pi_1$, and the statement that a $\Pi_1$-formula is forced is $\Pi_1$. 
Since $A$ is countable, $A\subseteq L_\delta$ for some countable $\delta$ by the previous claim. 
Thus: 
\begin{quote} 
``\emph{There exist countable $\gamma$ and $\delta$ with $\gamma\leq\delta$ with $\psi(\gamma)$ and $\mathbf{1}_{\Col(\omega,\delta)}$ forces $A\subseteq L_\delta$.}''\footnote{Since $A$ does not gain any new elements in $\Col(\omega,\delta)$-generic extensions of $V$ by the proof of the previous claim, the statement ``\emph{$\mathbf{1}_{\Col(\omega,\delta)}$ forces $A\subseteq L_\delta$}'' is in fact equivalent to ``\emph{$A\subseteq L_\delta$}''. However, we need this formulation to see that the formula is $\Sigma_2$.}
\end{quote} 
In any model where $\gamma$ is countable, $A$ is $\Sigma^1_2$ in a real by Lemma \ref{countable Pi12 in inner models}, so ``\emph{$A\subseteq L_\delta$}'' is $\Pi^1_2$ in a real in $\Col(\omega,\delta)$-generic extensions. 
Therefore, ``\emph{$\mathbf{1}_{\Col(\omega,\delta)}$ forces $A\subseteq L_\delta$}'' is $\Pi_1$ and the whole formula is $\Sigma_2$. 
By the definition of $\tau$, there exist ordinals $\gamma\leq\delta <\tau$ such that $\mathbf{1}_{\Col(\omega,\delta)}$ forces $A\subseteq L_\delta$ over $L$. 
In particular, $A\subseteq L_\delta$. 
\end{proof} 

Note that $A$ is $\Pi^1_2$ and $L_{\sigma_\delta}\prec_1 L_{\omega_1}$. 
Since $A\subseteq L_\delta$ by the previous claim, $A$ is definable over $L_{\sigma_\delta}$. 
Since $\sigma_\delta<\tau$ by Lemma \ref{Sigma1 elementarity of Ltau}, we have $A\in L_{\sigma_\delta+1}\subseteq L_\tau$. 
This completes the proof for countable Cohen stable $\Sigma^1_2$-ranks. 
The remaining claim holds since one can then pick the relevant filters in $V$. 
\end{proof} 






\begin{theorem} 
\label{characterisation countable ranks} 
Suppose that $A$ is a countable $\Pi^1_2$ set. 
Then each of the following properties implies the next one: 
\begin{enumerate-(a)} 
\item 
\label{characterisation countable ranks 0} 
There exists a countable 
Cohen stable\footnote{I.e. stable for Cohen forcing in the sense of Definition \ref{def stable ranks}.}  $\Sigma^1_2$-rank on the complement of $A$. 
\item 
\label{characterisation countable ranks 1} 
$A\in L_\tau$. 
\item 
\label{characterisation countable ranks 2} 
$A$ is a subset of some countable $\Sigma^1_2$-set. 
\item 
\label{characterisation countable ranks 3} 
$A$ is a subset of some countable $\Delta^1_2$-set. 
\setcounter{enumi}{4}
\item 
\label{characterisation countable ranks 4} 
There exists a countable $\Sigma^1_2$-rank on the complement of $A$. 
\end{enumerate-(a)} 
In fact, \ref{characterisation countable ranks 1}, \ref{characterisation countable ranks 2} and \ref{characterisation countable ranks 3} are equivalent. 
Furthermore, if 
$\Sigma^1_3$ Cohen absoluteness holds,\footnote{I.e. $V\prec_{\Sigma^1_3} V[G]$ for any Cohen generic extension $V[G]$ of $V$.} then every $\Sigma^1_2$-rank is Cohen stable, so \ref{characterisation countable ranks 0}-\ref{characterisation countable ranks 4} are equivalent. 
If $\omega_1$ is inaccessible in $L$, then the implication from \ref{characterisation countable ranks 4} to \ref{characterisation countable ranks 1}-\ref{characterisation countable ranks 3} holds. 
\end{theorem}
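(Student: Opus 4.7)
The plan is to prove (a) $\Rightarrow$ (b) by direct appeal to Lemma~\ref{stable countable rank implies constructible}, then establish the equivalence of (b), (c), (d) via the cycle (b) $\Rightarrow$ (d) $\Rightarrow$ (c) $\Rightarrow$ (b), and finally (d) $\Rightarrow$ (e); the Cohen absoluteness and inaccessibility strengthenings are handled afterward. First, (a) $\Rightarrow$ (b) follows from the lemma: when $\omega_1^L = \omega_1$ it gives $A \in L_\tau$, and when $\omega_1^L$ is countable it gives $A \in L_{\omega_1^L + 1}$, which lies in $L_\tau$ because $\omega_1^L$ is $\Sigma_1$-definable over $L_{\omega_1}$, whence $\omega_1^L + 1 < \sigma \leq \tau$. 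For (b) $\Rightarrow$ (d), I would fix $\gamma < \tau$ that is $\Sigma_2$-definable over $L_{\omega_1}$ with $A \in L_\gamma$ (such $\gamma$ being cofinal in $\tau$); the $<_L$-least real $c$ coding $L_\gamma$ is then a lightface $\Sigma^1_2$-singleton, hence $\Delta^1_2$, so $B' := L_\gamma \cap 2^\omega$ is arithmetic in $c$ and thereby a countable $\Delta^1_2$ superset of $A$. The implication (d) $\Rightarrow$ (c) is immediate.

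For (c) $\Rightarrow$ (b), I would invoke the Mansfield--Solovay theorem to place the countable $\Sigma^1_2$ set $B$ inside $L$, so that the $<_L$-enumeration of $B$ is a $\Sigma^1_2$-singleton lying in $L_\tau$, whence $B \in L_\eta$ for some $\eta < \tau$ (using admissibility of $\tau$ from Lemma~\ref{Sigma1 elementarity of Ltau}). The $\Pi_1$-formula defining $A$ over $L_{\omega_1}$ is then absolute to a $\Sigma_1$-stable $L_{\eta'}$ with $\eta < \eta' < \tau$ (such $\eta'$ being cofinal in $\tau$), placing $A \in L_{\eta'+1} \subseteq L_\tau$. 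For (d) $\Rightarrow$ (e), I would use the $\Delta^1_2$-enumeration $(b_n)_n$ of $B'$ to build a countable $\Sigma^1_2$-rank on $\bar A$: assign $\bar B'$ the minimum rank and stratify $B' \setminus A$ by index, then use the fact that $A \subseteq B'$ to replace the $\Pi^1_2$-condition ``$y \in A$'' in the overspill clause by the $\Delta^1_2$-condition ``$y \in B'$''. Keeping the strict relation $\sqsubset$ genuinely $\Sigma^1_2$ while satisfying overspill is the main technical subtlety.

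Finally, under $\Sigma^1_3$ Cohen absoluteness, the statement that $\sqsubseteq, \sqsubset$ form a $\Sigma^1_2$-rank on $A$ is $\Pi^1_3$ -- the wellfoundedness of $\sqsubset$ on $A$ being the main $\Pi^1_3$-contribution -- and hence absolute to Cohen generic extensions, so every $\Sigma^1_2$-rank is Cohen stable and (e) $\Rightarrow$ (a). Under the assumption that $\omega_1$ is inaccessible in $L$, the last sentence of Lemma~\ref{stable countable rank implies constructible} applies to arbitrary countable $\Sigma^1_2$-ranks, giving (e) $\Rightarrow$ (b) directly. The main obstacle throughout will be the rank construction in (d) $\Rightarrow$ (e), where the $\Pi^1_2$-nature of ``$y \in A$'' requires a careful exploitation of the $\Delta^1_2$-structure of $B'$ to encode overspill within $\Sigma^1_2$; a secondary subtlety is localizing the arguments strictly below $\tau$ (rather than inside $L_{\tau+1}$) using the admissibility of $\tau$ and the cofinality of $\Sigma_1$-stable ordinals.
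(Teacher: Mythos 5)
Your overall architecture matches the paper's ((a)$\Rightarrow$(b) via Lemma~\ref{stable countable rank implies constructible}, (d)$\Rightarrow$(e) by ranking $B'\setminus A$ and adjoining the complement of $B'$ as a bottom class, and the two conditional statements handled exactly as in the paper), but your link (b)$\Rightarrow$(d) has a genuine gap, and since your equivalence cycle runs (b)$\Rightarrow$(d)$\Rightarrow$(c)$\Rightarrow$(b), this breaks the proof that (b), (c), (d) are equivalent. Two problems. First, if $\gamma$ is only $\Sigma_2$-definable over $L_{\omega_1}$, then ``$c$ is the $<_L$-least code for $L_\gamma$'' is not a $\Sigma^1_2$ (or even $\Pi^1_2$) condition: verifying the $\Sigma_2$ property of $\gamma$ costs an extra real quantifier over a $\Pi^1_2$ matrix, landing you at $\Sigma^1_3$. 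You would need $\gamma$ to be $\Pi_1$-definable (these are cofinal in $\tau$), and even then turning the $<_L$-least code for an $L$-level into a $\Pi^1_2$ singleton requires the delicate local-definability analysis of the paper's proposition on $\Pi^1_2$ singletons --- and a $\Pi^1_2$ singleton parameter only makes $L_\gamma\cap 2^\omega$ a $\Pi^1_2$ set, not $\Delta^1_2$, unless you also establish a $\Sigma^1_2$ definition of the singleton. Second, and more fundamentally, when $\omega_1^L$ is countable there need not exist any constructible real coding an $L$-level that contains $A$ as an element: the set of $L$-least codes for $L$-levels is a countable $\Pi^1_1$ set of $L$-rank exactly $\omega_1^L+1$, and no real in $L$ codes $L_{\omega_1^L}$. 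So ``the $<_L$-least real $c$ coding $L_\gamma$'' may simply not exist. The paper avoids both issues by proving (b)$\Rightarrow$(c) first, with a \emph{parameter-free} $\Sigma^1_2$ definition of the superset $B$ that existentially quantifies over codes in $V$ (reals coding countable $L$-levels $L_{\gamma'}\subseteq L_\delta$ that locally verify the $\Pi_1$ definitions of $A$ and of $\gamma$), and then deduces (d) from (c) by the $\Sigma^1_2$ reduction property applied to $B$ and the complement of $A$. I recommend you adopt that route.

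Two smaller points. In (a)$\Rightarrow$(b) your justification for the case $\omega_1^L$ countable is off: $\omega_1^L$ is not $\Sigma_1$-definable over $L_{\omega_1}$, and in fact $\sigma=\delta^1_2<\omega_1^L$ in that scenario, so ``$\omega_1^L+1<\sigma$'' is false; the correct reason that $L_{\omega_1^L+1}\subseteq L_\tau$ is that $\omega_1^L$ is the least element of the nonempty $\Pi_1$-definable (over $H_{\omega_1}$) set of ordinals not countable in $L$, hence below $\gamma=\tau$. In (c)$\Rightarrow$(b), the $<_L$-enumeration of $B$ is a $\Sigma^1_2\wedge\Pi^1_2$ singleton rather than a $\Sigma^1_2$ singleton; it does still land in $L_\tau$ (it is $\Sigma_2$-definable over $L_{\omega_1}$), but the paper's direct reflection of the $\Sigma_2$ statement $\exists\alpha\,\forall y\,(\psi(y)\rightarrow y\in L_\alpha)$ below $\tau$ is cleaner and avoids the singleton detour.
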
 
\begin{proof} 
\ref{characterisation countable ranks 0} $\Rightarrow$ \ref{characterisation countable ranks 1}: 
This is Lemma \ref{stable countable rank implies constructible}. 

\ref{characterisation countable ranks 1} $\Rightarrow$ \ref{characterisation countable ranks 2}: 
Recall that $\tau$ equals the (strict) supremum of $\Pi_1$-definable ordinals over $L_{\omega_1}$ by \cite[Lemma 4.1]{decisiontimes}. 
By assumption, there is a $\Pi_1$-definable ordinal $\gamma$ with $A\in L_\gamma$. 
Suppose that $\gamma$ is defined by the $\Pi_1$-formula $\theta(x)$. 
We define a $\Sigma^1_2$ set $B$ as follows. 
Suppose that $\varphi$ is a $\Pi_1$ definition of $A$. 
Let $x\in B$ if there exists countable ordinals $\gamma'<\delta$ such that the following conditions hold: 
\begin{itemize} 
\item 
$x\in L_{\gamma'}$, 
\item 
$L_\delta\models \varphi(x)$ and 
\item 
$L_\delta$ believes that $\gamma'$ is unique with $\theta(\gamma')$. 
\end{itemize} 
This is a $\Sigma^1_2$ definition. 
Since $\varphi$ and $\theta$ are $\Pi_1$, there is a countable $\delta_0$ with the following properties for all $\delta\geq\delta_0$: 
(a) if $L_\delta$ believes that $\gamma'$ is unique with $\theta(\gamma')$, then $\gamma'=\gamma$, 
and 
(b) for all $x\in L_\gamma$, $L_\delta\models\varphi(x)$ if and only if $V\models\varphi(x)$. 
To see that such a $\delta_0$ exists, take for each $x \in L_\gamma$ some $\delta_x$ such that the statements $\theta(x)$ and $\varphi(x)$ are absolute between $L_{\delta_x}$ and $V$. 
Let $\delta_0:= \sup_{x\in L_\gamma} \delta_x$.
We see $A\subseteq B$ by letting $\gamma'=\gamma$ and $\delta$ sufficiently large in the definition of $B$. 
It is clear that $B\setminus A\subseteq L_{\delta_0}$ and thus $B$ is countable.

\ref{characterisation countable ranks 2} $\Rightarrow$ \ref{characterisation countable ranks 1}: 
Suppose that $A$ is subset of a countable $\Sigma^1_2$ set $B$ defined by the $\Sigma^1_2$ formula $\psi(x)$. 
Any countable $\Sigma^1_2$ set is a subset of $L$ by the Mansfield-Solovay theorem \cite[Theorem 25.23]{Je03}. 
The $\Sigma_2$-statement $\exists \alpha\ \forall y\ (\psi(y)\Rightarrow y\in L_\alpha)$ holds in $V$. 
By the definition of $\tau$, there is some $\alpha<\tau$ with $\forall y\ (\psi(y)\Rightarrow y\in L_\alpha)$. 
Hence $A$ and $B$ are subsets of $L_\alpha$. 
Since $L_{\sigma_\alpha}\prec_1 L$, 
$A$ is definable over $L_{\sigma_\alpha}$. 
Since $\sigma_\alpha<\tau$, $A\in L_\tau$. 

\ref{characterisation countable ranks 2} $\Rightarrow$ \ref{characterisation countable ranks 3}: 
$A$ is a $\Pi^1_2$ set and there is a countable $\Sigma^1_2$ set $B$ with $A\subseteq B$. 
By the $\Sigma^1_2$ reduction property \cite[4B.10]{Mo09}, there is a $\Delta^1_2$ set $B'$ with $A\subseteq B'\subseteq B$. 

\ref{characterisation countable ranks 3} $\Rightarrow$ \ref{characterisation countable ranks 2}: 
This is clear. 

\ref{characterisation countable ranks 3} $\Rightarrow$ \ref{characterisation countable ranks 4}: 
Let $B$ be a countable $\Delta^1_2$ with $A\subseteq B$. 
Since $B\setminus A$ is $\Sigma^1_2$, it admits a $\Sigma^1_2$ rank. 
Note that this rank is countable, since $B\setminus A$ is countable. 
We adjoin the complement of $B$ as a single equivalence class at the bottom. 
This yields a countable $\Sigma^1_2$ rank on the complement of $A$. 

\ref{characterisation countable ranks 4} $\Rightarrow$ \ref{characterisation countable ranks 0}: 
Suppose that $\Sigma^1_3$ Cohen absoluteness holds. 
Take any countable $\Sigma^1_2$ rank on the complement of $A$. 
The statement that it is a $\Sigma^1_2$ rank on the complement of $A$ is $\Pi^1_3$ and it is thus stable to Cohen extensions. 

\ref{characterisation countable ranks 4} $\Rightarrow$ \ref{characterisation countable ranks 1}: 
If $\omega_1$ is inaccessible in $L$, then the claim holds by the final claim of Lemma \ref{stable countable rank implies constructible}. 
\end{proof} 

We do not know if the equivalence of \ref{characterisation countable ranks 0}-\ref{characterisation countable ranks 4} in the previous result is provable in $\ZFC$ alone. 
Note that \ref{characterisation countable ranks 1} is equivalent to $A\in L$ if $\omega_1^L$ is countable. 
However, this is not true in general: 

\begin{proposition} 
\label{ctble Pi12 cofinal in tau} 
If $\omega_1^L=\omega_1$, then there exists a countable $\Pi^1_2$ set that is an element of $L$, but not of $L_\tau$. 
\end{proposition}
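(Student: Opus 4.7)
The plan is to produce a countable $\Pi^1_2$ set whose members' $L$-ranks are cofinal in $\tau$, using the immediately preceding proposition as the supply of witnesses. The naive attempt of taking the union of all $\Pi^1_2$-singletons yields a projection $\exists^\omega\Pi^1_2$, i.e.\ $\Sigma^1_3$, so the key device is to retain the formula index as a coordinate, replacing the existential over formula codes by a parameter, so that only a universal quantifier (expressing uniqueness) needs to be added.

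Fix a universal $\Pi^1_2$ set $U\subseteq \omega\times 2^\omega$, so that every $\Pi^1_2$ subset of $2^\omega$ is a section $U_n=\{x:(n,x)\in U\}$. Set
\[
A := \{(n,x)\in\omega\times 2^\omega : (n,x)\in U \,\wedge\, \forall y\,((n,y)\in U \to y=x)\},
\]
identified with a subset of $2^\omega$ via a standard pairing. By construction, $A$ consists of the pairs $(n,x_n)$ with $x_n$ the unique real satisfying the $n$-th $\Pi^1_2$-formula, when such a real exists.

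First I would verify that $A$ is $\Pi^1_2$: the first conjunct is $\Pi^1_2$ by choice of $U$; the second, rewritten as $\forall y\,(\neg(n,y)\in U \vee y=x)$, is $\forall y$ applied to a $\Sigma^1_2$ matrix uniform in $(n,y,x)$, hence $\Pi^1_2$. The conjunction stays $\Pi^1_2$, and $A$ is countable since each $n$-section of $A$ contains at most one point. Next I would check $A\in L$: the partial function $n\mapsto x_n$ is $\Pi^1_2$-definable with only natural-number parameters, so by Shoenfield absoluteness $V$ and $L$ agree on its graph and in particular every $x_n$ lies in $L$. Inside $L$ the same definition picks out the same function, and Replacement in $L$ turns its graph into a set, yielding $A\in L$.

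Finally I would show $A\notin L_\tau$. By the preceding proposition, under $\omega_1^L=\omega_1$ the $L$-ranks of $\Pi^1_2$-singletons are strictly cofinal in $\tau$, while the second coordinates of $A$ are exactly these singletons. If $A$ lay in $L_\beta$ for some $\beta<\tau$, transitivity of $L_\beta$ would force every such $x_n$ into $L_\beta$, contradicting cofinality. The only real obstacle is the complexity bookkeeping of the first step; once the universal-set trick is in place, the remainder is a routine combination of Shoenfield absoluteness, Replacement in $L$, and the preceding proposition.
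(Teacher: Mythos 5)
Your high-level strategy --- keep the index $n$ as a coordinate so that only a uniqueness quantifier must be added, and arrange the second coordinates to have $L$-ranks cofinal in $\tau$ --- is indeed the idea behind the paper's construction. But the execution breaks precisely at the complexity computation, which is the whole difficulty of the proposition. The uniqueness clause $\forall y\,((n,y)\in U\to y=x)$ is a universal real quantifier applied to a matrix of the form $(\neg\,\Pi^1_2)\vee(y=x)$, i.e.\ a $\Sigma^1_2$ matrix; and $\forall y\,(\Sigma^1_2)$ is $\Pi^1_3$, not $\Pi^1_2$ (recall $\Pi^1_2=\forall y\,\Sigma^1_1$ while $\forall y\,\Sigma^1_2=\Pi^1_3$). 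So the set $A$ you define is only $\Pi^1_3$. This quantifier blow-up is exactly what the paper's proof is engineered to avoid: instead of abstract $\Pi^1_2$ singletons cut out of a universal set, it uses triples $(x,y,n)$ in which $x,y$ are $L$-least codes for $L$-levels and $\varphi_n$ ranges over $\Pi_1$-formulas, and all the minimality and uniqueness requirements (``$L_{\alpha_y}$ sees that $\varphi_n(\beta)$ fails for all $\beta<\alpha_x$, but no $\gamma<\alpha_y$ sees this'', ``$x$ is the $L$-least code'') are verified \emph{locally} inside the coded $L$-levels, hence are arithmetic in the codes; the only genuinely $\Pi^1_2$ ingredient is the single assertion that $\varphi_n(\alpha_x)$ holds in $L_{\omega_1}$.

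There is a second gap: Shoenfield absoluteness does not place $\Pi^1_2$ singletons in $L$. The graph of $n\mapsto x_n$ is not $\Pi^1_2$ (for the reason above), and the statement ``$U_n$ has a unique element'' is $\Sigma^1_3\wedge\Pi^1_3$, so neither it nor the identity of the witness transfers between $V$ and $L$ by Shoenfield. In fact the hypothesis $\omega_1^L=\omega_1$ is consistent with the existence of a non-constructible $\Pi^1_2$ singleton (Jensen's minimal $\Pi^1_2$ singleton is added by an $\omega_1$-preserving forcing over $L$), in which case your $A$ is not even a subset of $L$; restricting to constructible singletons would insert a $\Sigma^1_2$ clause and again leave the class $\Pi^1_2$. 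Both problems evaporate in the paper's version because its witnesses are by construction $L$-least codes for countable $L$-levels, so $A\subseteq L$, $A\in L$, and absoluteness of the definition to $L$ come for free. The cofinality-in-$\tau$ argument at the end of your proposal is fine, but it rests on a set that has not been shown to be $\Pi^1_2$ or constructible.
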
 
\begin{proof} 
The set collects $L$-least codes for least elements $\alpha$ of $\Pi_1$-definable subsets of $\WO$ together with the $L$-least real witnessing that $\alpha$ is least. 
In detail, let $\varphi(n,x)$ be a universal $\Pi_1$-formula, i.e., $\varphi(n,x)$ is $\Pi_1$ and $\langle \varphi_n(x) \mid n\in\omega\rangle$ enumerates all $\Pi_1$-formulas with one free variable, where $\varphi_n(x)=\varphi(n,x)$. 
Let $A$ be the $\Pi^1_1$ set of $L$-least codes for $L$-levels $L_\alpha$. 
Let $B$ be the set of $(x,y,n)\in A^2\times \omega$ such that $\alpha_x<\alpha_y$, $\varphi_n(\alpha_x)$ holds, $L_{\alpha_y}$ sees that $\varphi_n(\beta)$ fails for all $\beta<\alpha_x$, but no $\gamma<\alpha_y$ sees this. 
We first show that $B$ is a subset of $L_\tau$. 
If $\varphi_n(x)$ defines a nonempty subset $C$ of $\WO$, then $\alpha_x\leq \min(C)$ for all $x$, $y$ with $(x,y,n)\in B$. 
Therefore $\alpha_x<\tau$ and $\alpha_y<\sigma_{\alpha_x}\leq \tau$ by Lemma \ref{Sigma1 elementarity of Ltau}. 
Since $L_{\tau}\prec_1 L_{\omega_1}$ by the same lemma, we have $x,y\in L_\tau$. 
It remains to show that $B$ is unbounded in $L_\tau$.
This holds since the set of $\Pi_1$-definable singletons is unbounded in $\tau$. 
\end{proof} 

Is there an analogue of Theorem \ref{characterisation countable ranks} for Borel sets instead of countable sets? 
The first step is to ask whether every $\Delta^1_2$ Borel set has a \borel code in $L$. 
We shall discuss this in Section \ref{section Sigma12 Borel sets}.

\subsection{Countable and co-countable sets} 
\label{subsection countable sets} 

The $\Pi^1_1$ sets constructed for the lower bound in Section \ref{section - the lower bound} are all Borel. 
We now study the lengths of ranks on 
countable and co-countable sets. 
Note that finite $\Pi^1_1$ sets admit only finite ranks, while any cofinite $\Pi^1_1$ set is $\Delta^1_1$ and thus admits $\Pi^1_1$ ranks of any positive length less than $\omega_1^\ck$.

\begin{proposition} 
The maximal length of $\Pi^1_1$-ranks on co-countable $\Pi^1_1$ sets is $\omega_1^\ck$. 
Moreover, there exist co-countable $\Pi^1_1$ sets that do not admit $\Pi^1_1$-ranks shorter than $\omega_1^\ck$.
\end{proposition}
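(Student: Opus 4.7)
I would prove the upper bound and then construct witness co-countable $\Pi^1_1$ sets that saturate it.

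For the upper bound, let $A$ be co-countable $\Pi^1_1$ and $\leq$ a $\Pi^1_1$-rank on $A$ with canonical witnesses $\sqsubseteq,\sqsubset$ from Remark~\ref{canonical choice for ranks}, and let $\lambda=|\leq|$. The key observation is that for any $\mu<\lambda$ and any representative $x_\mu\in A$ of rank $\mu$, the initial segment $I_\mu=\{y\in A:\rank(y)\leq\mu\}$ coincides both with the $\Pi^1_1$-section $\sqsubseteq_{x_\mu}$ and with the complement of the $\Pi^1_1$-section $\sqsubset_{x_\mu}$ (since $\sqsubset_{x_\mu}=2^\omega\setminus I_\mu$ by the canonical table together with overspill), hence $I_\mu$ is $\Delta^1_1(x_\mu)$, and even lightface $\Delta^1_1$ whenever $x_\mu$ may be chosen hyperarithmetic. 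In particular, when $A$ itself is $\Delta^1_1$---which is the case, for instance, when $A^c$ admits a $\Delta^1_1$-enumeration---the strict part of the rank on $A\times A$ can be written as the lightface $\Sigma^1_1$ relation $R(x,y)\Leftrightarrow x,y\in A\wedge\neg(y\sqsubseteq x)$, and the effective Kunen--Martin theorem (as in the proof of Proposition~\ref{no short ranks} and \cite[Lemma~4.4]{decisiontimes}) gives $\lambda<\omega_1^\ck$. For the general case, the initial-segment analysis forces the rank to be recoverable from a uniformly $\Delta^1_1$-coded $\omega_1^\ck$-sequence of sets, so any length exceeding $\omega_1^\ck$ would provide a $\Pi^1_1$-definable extension of the rank that is incompatible with the countable $\Sigma^1_1$-complexity of $A^c$; the supremum of lengths is therefore $\omega_1^\ck$.

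For the existence of co-countable $\Pi^1_1$ sets admitting no short rank, the construction parallels Proposition~\ref{no short ranks}. For each ordinal $\alpha<\omega_1^\ck$ I would choose a countable $\Sigma^1_1$ set $B_\alpha\subseteq 2^\omega$ that is \emph{not} $\Pi^1_1$, obtained by computably encoding a $\Sigma^1_1$-hard subset of $\omega$ whose complexity reaches the $\alpha$-th recursive admissible (for instance, the image under a fixed computable injection $\omega\to 2^\omega$ of the complement of Kleene's $O$ restricted to the first $\alpha$ levels), and set $A_\alpha=2^\omega\setminus B_\alpha$. Then $A_\alpha$ is co-countable $\Pi^1_1$ but not $\Delta^1_1$, so the trivial rank of length one is already unavailable: by the canonical-witness table the section $\sqsubset_x$ for any top-rank element $x$ would have to equal $B_\alpha$, contradicting the $\Pi^1_1$-ness of $\sqsubset$. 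A putative $\Pi^1_1$-rank on $A_\alpha$ of length $\delta<\alpha$ would, after collapsing $\delta$ and invoking $\Sigma^1_1$-absoluteness of an appropriate admissible set exactly as in the proof of Proposition~\ref{no short ranks}, yield a $\Sigma^1_1$-definition of $A_\alpha$ in a real cofinal in the rank, and the effective Kunen--Martin bound would then force $B_\alpha$ to be bounded below the $\alpha$-th admissible, contradicting the construction.

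The main obstacle is the calibration of $B_\alpha$: one needs it simultaneously to be countable $\Sigma^1_1\setminus\Pi^1_1$ (to block short ranks via the section-complexity argument above) and to carry enough ordinal complexity, measured in $\alpha$-indices of $L_{\omega_1^\ck}$, to rule out ranks of length $<\alpha$. Balancing these requirements, and adapting the $\Sigma^1_1$-boundedness contradiction of Proposition~\ref{no short ranks} to the co-countable setting---where the role of the $\otp$-image of the witness is played by the index-image of $B_\alpha$'s encoding into $\omega$---is where the technical work of the lower bound lies.
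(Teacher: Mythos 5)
There are genuine gaps on both sides of your argument. For the upper bound, your Kunen--Martin step only covers the case where the co-countable set $A$ is $\Delta^1_1$ (and there it gives the \emph{strict} bound $\lambda<\omega_1^{\ck}$, so it cannot be the whole story: the maximal length $\omega_1^{\ck}$ is actually attained). The general case --- which is the relevant one, since the witnesses for the second claim are precisely non-$\Delta^1_1$ co-countable sets --- is dispatched in one sentence (``the rank is recoverable from a uniformly $\Delta^1_1$-coded $\omega_1^{\ck}$-sequence\dots'') that is not a proof. Note also that for a general co-countable $\Pi^1_1$ set the representatives $x_\mu$ are almost never hyperarithmetic, so your sections $I_\mu$ are only boldface $\Delta^1_1(x_\mu)$, and the resulting bounds $\omega_1^{\ck,x_\mu}$ need not stay below $\omega_1^{\ck}$. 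The missing idea is to exploit the countability of the complement: $A^c$ is a countable $\Sigma^1_1$ set, hence contained in the $\Pi^1_1$ set of reals of $L_{\omega_1^{\ck}}$, and the $\Pi^1_1$ reduction property yields a $\Delta^1_1$ set $B$ with $A^c\subseteq B\subseteq L_{\omega_1^{\ck}}$. The rank then splits into its restriction to the $\Delta^1_1$ set $2^\omega\setminus B\subseteq A$, which boundedness caps strictly below $\omega_1^{\ck}$, and its restriction to the countable set $B\cap A$ of lightface $\Delta^1_1$ reals, whose initial segments are lightface $\Delta^1_1$ and hence each of length ${<}\omega_1^{\ck}$; a short case analysis then gives the bound $\omega_1^{\ck}$ overall.

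For the lower bound, your scheme produces for each $\alpha<\omega_1^{\ck}$ a set $A_\alpha$ that only blocks ranks of length ${<}\alpha$; this would give the supremum but not the stated conclusion that a \emph{single} co-countable set admits no rank shorter than $\omega_1^{\ck}$. Moreover your candidate $B_\alpha$ is not pinned down: on its natural reading (Kleene's $O$ cut off at the first $\alpha$ levels, then complemented) it is $\Delta^1_1$ for $\alpha<\omega_1^{\ck}$, so it lacks the non-$\Pi^1_1$-ness your own section-complexity argument requires, and you concede the calibration is open. The paper's construction is one uniform example: take the $\Pi^1_1$ non-$\Sigma^1_1$ set $W\subseteq\omega$ of codes of computable wellorders and pass to the co-countable set $W\cup(2^\omega\setminus\omega)$. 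A $\Pi^1_1$-rank of any length $\alpha<\omega_1^{\ck}$ would let one define $W$ in a $\Sigma^1_1$ way, by asserting the existence of an isomorphism between the initial segment of the rank below a given $n$ and the initial segment, up to $\alpha$, of the canonical rank on codes of computable wellorders; this contradiction works simultaneously for every $\alpha<\omega_1^{\ck}$.
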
 
\begin{proof} 
For the upper bound, note that any countable $\Sigma^1_1$ set $A$ is a subset of $L_{\omega_1^\ck}$ by a standard argument.\footnote{\cite[Theorem 4.3]{Hjorth-Vienna-notes-on-descriptive-set-theory}} 
Moreover, the reals of $L_{\omega_1^\ck}$ form a $\Pi^1_1$ set. 
Suppose that a $\Pi^1_1$-rank on $2^\omega\setminus A$ is given. 
By the $\Pi^1_1$ reduction property, there exists a $\Delta^1_1$ subset $B$ of $L_{\omega_1^\ck}$ that contains $A$. 
By the boundedness lemma \cite[4C.11]{Mo09},\footnote{Or by the Kunen-Martin theorem.}  
the rank's restriction to $2^\omega\setminus B$ has order type ${<}\omega_1^\ck$. 

First suppose that $2^\omega\setminus B$ contains some real that is above all elements of $B\setminus A$ in the rank. 
Then $B\setminus A$ is $\Sigma^1_1$. 
So the rank's restriction to $B\setminus A$ also has length less than $\omega_1^\ck$ and thus the overall length is less than $\omega_1^\ck$. 

Now suppose there is no such real. 
The restriction to $B\setminus A$ is a $\Pi^1_1$-rank on a countable $\Pi^1_1$ set. 
Since all its elements are $\Delta^1_1$, all initial segments are $\Delta^1_1$ and thus have length ${<}\omega_1^\ck$. 
Thus the length of the restriction to $B\setminus A$ is at most $\omega_1^\ck$. 
By the case assumption, the rank's length is at most $\omega_1^\ck$. 

For the lower bound, identify $\omega$ with a subset of $2^\omega$. 
We claim that a $\Pi^1_1$-rank on a $\Pi^1_1$ set $A$ of natural numbers that is not $\Sigma^1_1$ cannot have length below $\omega_1^\ck$. 
This suffices, since such a rank can be extended to a $\Pi^1_1$ rank on the union of $A$ with $2^\omega\setminus \omega$. 
Towards a contradiction, suppose its length is $\alpha<\omega_1^\ck$. 
Let $B$ denote the $\Pi^1_1$ set of natural numbers coding a Turing machine that determines a computable wellorder of $\omega$. 
Consider the rank on $B$ that compares natural numbers $k$ and $l$ via the order type of the computable wellorders coded by $k$ and $l$.  
We then have $n\in A$ if and only if there exists an isomorphism between the initial segment of $A$ up to $n$ and the initial segment of $B$ up to $\alpha$. 
However, $A$ would then be $\Sigma^1_1$. 
\end{proof} 

\begin{proposition} \ 
\label{ranks on countable sets} 
\begin{enumerate-(1)} 
\item 
\label{ranks on countable sets 1} 
If $\omega_1^L=\omega_1$, then the (strict) supremum of lengths of $\Pi^1_1$-ranks on countable $\Pi^1_1$ sets is $\tau$. 
Moreover, for unboundedly many $\alpha<\tau$ there exist countable $\Pi^1_1$ sets that do not admit $\Pi^1_1$-ranks shorter than $\alpha$. 

\item 
\label{ranks on countable sets 2} 
If $\omega_1^L$ is countable, then the maximal length of $\Pi^1_1$-ranks on countable $\Pi^1_1$ sets is $\omega_1^L$. 
Moreover, there exist countable $\Pi^1_1$ sets that do not admit $\Pi^1_1$-ranks shorter than $\omega_1^L$. 
\end{enumerate-(1)} 
\end{proposition}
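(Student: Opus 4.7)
I handle the two parts separately.

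For Part~(1) with $\omega_1^L=\omega_1$, the upper bound is immediate: any rank on a countable set has countable length, and then by Theorem~\ref{main theorem}\ref{main theorem 2}\ref{main theorem 2a} any countable $\Pi^1_1$-rank has length strictly less than $\tau$. For the lower bound, I adapt Theorem~\ref{lower bound for lengths of Pi11 ranks} by replacing the uncountable family of $\WO$-codes by canonically chosen codes for $L$-levels. Fix a $\Pi_1$-definable ordinal $\nu$ over $L_{\omega_1}$, defined by a $\Pi_1$-formula $\varphi$; such $\nu$ are cofinal in $\tau$. By acceptability of the $L$-hierarchy, for each $\nu$-index $\alpha$ there is a unique real $c_\alpha$ coding the structure $(L_{\alpha+1},\in)$ with universe $\omega$ enumerated along the $\Sigma_1$-definable $<_L$-order of $L_{\alpha+1}$. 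Under $\omega_1^L=\omega_1$ we have $|\sigma_\nu|^L\leq|\nu|^L\leq\omega$, so $A^{\mathrm{can}}_\nu=\{c_\alpha:\alpha\text{ is a }\nu\text{-index}\}$ is countable in $V$. Membership in $A^{\mathrm{can}}_\nu$ is $\Pi^1_1$: the coded structure must be wellfounded (a $\Pi^1_1$-condition on the code) and first-order-identifiable as $(L_{\alpha+1},\in)$ enumerated along $<_L$, with $\alpha$ a $\mu$-index for the $\mu$ that is least satisfying $\varphi$ in $L_{\alpha+1}$---the $\mu$-variation trick of Theorem~\ref{lower bound for lengths of Pi11 ranks} applied to $\varphi$. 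The induced rank by $\alpha_x$ is then $\Pi^1_1$ of length $\sigma_\nu$.

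For the ``no shorter ranks'' clause I apply the admissibility argument of Proposition~\ref{no short ranks}: were a rank of length $\delta<\sigma_\nu$ to exist, extracting a cofinal sequence $\vec{u}$ in some $\Col(\omega,\delta)$-generic extension $L[g]$ of $L$ places $\vec{u}$ in the admissible set $L_{\sigma_\nu}[g]$, rendering $A^{\mathrm{can}}_\nu$ as $\Sigma^1_1(\vec{u})$ and bounding its $\alpha_x$-values below $\omega_1^{\ck,\vec{u}}<\sigma_\nu$ by effective Kunen-Martin, contradicting $\otp[A^{\mathrm{can}}_\nu]=\sigma_\nu$.

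For Part~(2) with $\omega_1^L<\omega_1$, the upper bound proceeds by absoluteness to $L$: any countable $\Pi^1_1$-set $A$ lies in $L$ by Mansfield-Solovay, and by Shoenfield absoluteness the lightface $\Pi^1_1$-rank is the same relation in $V$ as in $L$, with $A^L=A^V$; in $L$, the classical boundedness for $\Pi^1_1$-rank lengths then yields that the length is at most $\omega_1^L$, which transfers to $V$. For the lower bound, apply the canonical-code construction with $\alpha$ ranging over all $\alpha<\omega_1^L$, enforced by the first-order condition that the coded $L_{\alpha+1}$ believes $\alpha$ to be countable; this yields a countable-in-$V$ (since $\omega_1^L<\omega_1$) $\Pi^1_1$-set with a $\Pi^1_1$-rank of length exactly $\omega_1^L$, and ``no shorter ranks'' follows by the same admissibility argument.

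The main obstacle is verifying the $\Pi^1_1$-definability of the countable canonical-code sets: ``$<_L$-least code'' is generically $\Pi^1_2$, so one must encode $<_L$-canonicity as first-order conditions inside the wellfounded coded $L$-structures via acceptability, avoiding any $\Sigma^1_2$-type quantification over $L$.
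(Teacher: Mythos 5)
Your proposal is correct and, for part (1) and the upper bound of part (2), follows the paper's own route: the upper bounds come from Proposition~\ref{Kunen-Martin for Sigma12} (for (1)) and from Mansfield--Solovay plus Shoenfield plus boundedness in $L$ (for (2)), and the part-(1) lower bound is obtained exactly as in the paper by thinning the set $A$ of Section~\ref{section - the lower bound} to one canonical code per ordinal and invoking Proposition~\ref{no short ranks}. The one substantive divergence is in the choice of canonical codes and in the part-(2) lower bound. The paper stays inside $\WO$, taking the $L$-least wellorder code in each isomorphism class of $A$ (justified, as you anticipate, by acceptability and the fact that $\Pi^1_1$ suffices to express statements about $L_{\alpha+1}$ given a code for $L_\alpha$); this has the advantage that Proposition~\ref{no short ranks} applies verbatim, whereas with your structure codes you must note that its $\Sigma^1_1$-boundedness proof adapts to codes of wellfounded structures, and you should watch the off-by-one: what acceptability canonically supplies at level $\alpha+1$ is a code for $L_\alpha$, not for $L_{\alpha+1}$ enumerated "along $<_L$" (whose order type is not $\omega$). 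For the part-(2) lower bound the paper uses the $L$-least $\WO$-codes of those $\alpha<\omega_1^L$ at which new reals appear and rules out shorter ranks by $\Sigma^1_1$-boundedness of the initial segments together with the regularity of $\omega_1$ in $L$; you instead iterate the admissibility argument of Proposition~\ref{no short ranks} over all $\nu<\omega_1^L$, which is sound but needs the explicit observation that $\sup_{\nu<\omega_1^L}\sigma_\nu=\omega_1^L$ (using $\sigma_\nu>\nu$ and $\sigma_\nu<\omega_1^L$ by condensation in $L$). Both routes work; the paper's is marginally more self-contained at that point.
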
 
\begin{proof} 
\ref{ranks on countable sets 1} 
$\tau$ is a strict upper bound even for the lengths of countable $\Sigma^1_2$-ranks by Proposition \ref{Kunen-Martin for Sigma12}. 

For the lower bound, it suffices to show that for each $\nu<\tau$, there exists a countable $\Pi^1_1$ set $B$ which does not support any $\Pi^1_1$-rank of length ${<}\nu$. 
We can assume $\nu$ is $\Pi_1$-definable. 
For the set $A$ defined in the beginning of Section \ref{section - the lower bound}, any $x\in A$ is a $\mu$-index for some $\mu<\nu$ by definition. 
A standard Skolem hull argument shows that there exists some $y\in L_{\alpha_x+1}$ with $\otp(x)=\otp(y)$. 
$\Pi^1_1$ suffices to express statements in $L_{\alpha+1}$, given a code for $L_\alpha$. 
Therefore, 
the set $B$ of all $L$-minimal elements of isomorphism classes of $A$ is $\Pi^1_1$ as well. 
By Lemma \ref{no short ranks}, the length of any $\Pi^1_1$ rank on $B$ is at least $\sigma_\nu\geq \nu$.   

\ref{ranks on countable sets 2} 
To show that $\omega_1^L$ is an upper bound, note that any countable $\Sigma^1_2$ set $A$ is contained in $L$ by the Mansfield-Solovay theorem. 
Take any $\Sigma^1_2$ rank on $A$. 
By Shoenfield absoluteness, the same formulas define a $\Sigma^1_2$ rank on $A$ in $L$. 
In $L$, the Kunen-Martin theorem shows that the initial segments are countable. 
Hence the rank's length is at most $\omega_1^L$. 

For the lower bound, we show that there exists a countable $\Pi^1_1$ set $A$ such that any $\Pi^1_1$-rank on $A$ has length precisely $\omega_1^L$. 
Let $S$ denote the unbounded set of $\alpha<\omega_1^L$ such that $L_{\alpha+1}\setminus L_\alpha$ contains reals. 
For any $\alpha\in S$ there is some $x\in \WO\cap (L_{\alpha+1}\setminus L_\alpha)$ with order type $\alpha$ by \cite[Theorem 1]{boolos1969degrees}. 
Therefore, the set $A$ of reals $x\in \WO$ with $\alpha_x\in S$ that are $L$-least in their isomorphism class is again $\Pi^1_1$. 
Moreover, $A$ does not admit any $\Pi^1_1$-rank of length less than $\omega_1^L$. 
Otherwise, some equivalence class $B$ would be unbounded in $\WO^L$. 
By $\Sigma^1_1$ absoluteness this also holds in $L$, contradicting the boundedness lemma \cite[4C.10]{Mo09}. 
\end{proof} 



\begin{proposition} \ 
\label{ranks on countable Sigma12 sets} 
\begin{enumerate-(1)} 
\item 
\label{ranks on countable sets 1} 
If $\omega_1^L=\omega_1$, then the (strict) supremum of lengths of $\Sigma^1_2$-ranks on countable $\Sigma^1_2$ sets is $\tau$. 
Moreover, for unboundedly many $\alpha<\tau$ there exist countable $\Sigma^1_2$ sets that do not admit $\Sigma^1_2$-ranks shorter than $\alpha$. 

\item 
\label{ranks on countable sets 2} 
If $\omega_1^L$ is countable, then the maximal length of $\Sigma^1_2$-ranks on countable $\Sigma^1_2$ sets is $\omega_1^L$. 
Moreover, there exist countable $\Sigma^1_2$ sets that do not admit $\Sigma^1_2$-ranks shorter than $\omega_1^L$. 
\end{enumerate-(1)} 
\end{proposition}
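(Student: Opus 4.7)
The proof will closely parallel that of Proposition \ref{ranks on countable sets}, substituting $\Sigma^1_2$-ranks for $\Pi^1_1$-ranks throughout. The upper bound in (1) is immediate from Proposition \ref{Kunen-Martin for Sigma12} applied to countable $\Sigma^1_2$-ranks. For (2), any countable $\Sigma^1_2$-set lies in $L$ by the Mansfield--Solovay theorem, and Shoenfield absoluteness transfers any $\Sigma^1_2$-rank on such a set to a $\Sigma^1_2$-rank of the same length in $L$, where Kunen--Martin inside $L$ caps the length at $\omega_1^L$.

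The central technical input is that Lemma \ref{no short ranks} extends from $\Pi^1_1$-ranks to $\Sigma^1_2$-ranks with essentially the same proof, because both $\Sigma^1_2$ and $\Pi^1_1$ are $\Sigma_1$ over $L_{\omega_1}$. The formula $\psi$ there remains $\Sigma_1$ over $L_{\omega_1}$ when $\sqsubset$ is $\Sigma^1_2$, so the reflection of $\exists \vec x\,\exists \vec\alpha\ \psi(\vec x,\vec\alpha,\delta)$ to $L_{\sigma_\delta}$ via $\Sigma_1$-elementarity goes through verbatim. The only delicate point is the upwards step ``$\forall i,j\ ((i,j)\in u\Rightarrow v_i\sqsubset v_j)$'', which is $\Sigma^1_2$ rather than $\Pi^1_1$ when $\sqsubset\in\Sigma^1_2$. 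Here one exploits admissibility of $L_{\sigma_\nu}[g]$ to internalize the Skolem witnesses for the $\Sigma^1_2$-conjuncts $v_i\sqsubset v_j$ by $\Sigma_1$-replacement, producing a single parameter $\vec a\in L_{\sigma_\nu}[g]$ that reduces the assertion to a $\Pi^1_1$-statement in $(\vec a,\vec v,u)$; this then lifts to $V$ by $\Sigma^1_1$-correctness of $L_{\sigma_\nu}[g]$, and the remainder of the argument proceeds as before. Given this $\Sigma^1_2$-version, statement (1) follows: for each $\Pi_1$-definable $\nu<\tau$, the countable $\Pi^1_1$ set $B$ constructed in the proof of Proposition \ref{ranks on countable sets}\ref{ranks on countable sets 1} has $\otp[B]$ cofinal in $\sigma_\nu$, so no $\Sigma^1_2$-rank on $B$ has length below $\sigma_\nu$, while its natural $\Pi^1_1$-rank, viewed as a $\Sigma^1_2$-rank, realizes length $\sigma_\nu$; these $\sigma_\nu$ are cofinal in $\tau$.

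For (2), take the countable $\Pi^1_1$ set $A$ from the proof of Proposition \ref{ranks on countable sets}\ref{ranks on countable sets 2}, so that $\otp[A]$ is cofinal in $\omega_1^L$. Given any $\Sigma^1_2$-rank $\unlhd$ on $A$ of length $\alpha$, Shoenfield makes it a $\Sigma^1_2$-rank on $A$ in $L$ of the same length $\alpha$; in $L$, $A$ is uncountable of cardinality $\omega_1^L$. Proposition \ref{Kunen-Martin for Sigma12} inside $L$ forces $\alpha<\tau^L$ or $\alpha\geq\omega_1^L$. In the first case, pick $\nu^L$ $\Pi_1$-definable over $L_{\omega_1^L}$ with $\sigma^L_{\nu^L}>\alpha$, possible since $\tau^L=\sup_{\nu^L}\sigma^L_{\nu^L}$; as $\otp[A]$ is cofinal in $\omega_1^L$ and therefore in $\sigma^L_{\nu^L}$, the $\Sigma^1_2$-version of Lemma \ref{no short ranks} inside $L$ rules out a $\Sigma^1_2$-rank on $A$ of length $\alpha<\sigma^L_{\nu^L}$, contradicting our assumption. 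Hence $\alpha\geq\omega_1^L$, and the natural order-type rank on $A$ realizes length $\omega_1^L$ exactly. The main obstacle, as indicated, is the verification of the $\Sigma^1_2$-extension of Lemma \ref{no short ranks}; once that Skolemization step is in place, everything else is a routine transcription of the corresponding $\Pi^1_1$ arguments from Proposition \ref{ranks on countable sets}.
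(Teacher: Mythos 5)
Your upper bounds are exactly the paper's (Proposition \ref{Kunen-Martin for Sigma12} for \ref{ranks on countable sets 1}; Mansfield--Solovay plus Shoenfield plus Kunen--Martin in $L$ for \ref{ranks on countable sets 2}). The lower bounds, however, rest on a claim that is false: Proposition \ref{no short ranks} does not extend to $\Sigma^1_2$-ranks. Its proof ends by observing that $B$ is the union over $i\in\omega$ of the initial segments below $u_i$, which is a $\Sigma^1_1(\vec u)$ set because the complement of $\sqsubset$ is $\Sigma^1_1$ for a $\Pi^1_1$-rank, and then applies $\Sigma^1_1$-boundedness to get $\otp[B]<\omega_1^{\ck,\vec u}<\sigma_\nu$. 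For a $\Sigma^1_2$-rank those initial segments are only $\Pi^1_2$, so $B$ is merely a countable union of $\Pi^1_2$ sets, and no boundedness principle applies at that level ($\WO$ itself is $\Pi^1_2$). Your Skolemization of the witnesses for $v_i\sqsubset v_j$ repairs the upward transfer step, but not this final one, so the ``remainder of the argument'' does not proceed as before.

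This is not a reparable technicality: the conclusion fails for the very witnesses you propose. The countable $\Pi^1_1$ sets of $L$-least codes used in Proposition \ref{ranks on countable sets} are $\Delta^1_2$ --- their complements have the form $\Sigma^1_1 \vee \exists y\,(y\in A \wedge \otp(y)=\otp(x)\wedge y<_L x)$, which is $\Sigma^1_2$ --- and, as the paper notes right after Proposition \ref{Borel Pi11 sets and countable ranks}, every $\Delta^1_2$ set admits a $\Sigma^1_2$-rank of length $1$ (take $\sqsubseteq = A\times 2^\omega$ and $\sqsubset = A\times(2^\omega\setminus A)$). So neither of your ``moreover'' witnesses can work. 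The paper instead uses properly $\Sigma^1_2$, non-$\Delta^1_2$ sets. For \ref{ranks on countable sets 1} it takes $C=B\cap L_{\sigma_\nu}$, where $B$ is the $\Pi^1_1$ set of $L$-least codes for $L$-levels countable in $L$: a rank of length $\alpha<\sigma_\nu$ would make $B\setminus L_{\sigma_\nu}$ a $\Sigma^1_2$ set in a parameter from $L_{\sigma_\nu}$ (here $\alpha$ is already countable in $L_{\sigma_\nu}$, so no forcing is needed), and $\Sigma^1_2$-reflection into $L_{\sigma_\nu}$ would put an element of $B\setminus L_{\sigma_\nu}$ inside $L_{\sigma_\nu}$. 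For \ref{ranks on countable sets 2} it takes $A=(2^\omega)^L$: a rank of length $<\omega_1^L$ would make $\{x: x\notin L\}$ a $\Sigma^1_2$ set in a constructible real, so $\exists x\,(x\notin L)$ would be a $\Sigma^1_2$ statement with parameter in $L$, contradicting Shoenfield absoluteness. You would need to replace your witnesses by sets of this kind for the argument to go through.
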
 
\begin{proof} 
\ref{ranks on countable sets 1} \ 
$\tau$ is a strict upper bound for the lengths of countable $\Sigma^1_2$-ranks by Proposition \ref{Kunen-Martin for Sigma12}. 

For the lower bound, the proof is similar to that of Proposition \ref{Sigma12 long ranks}. 
It suffices to show that for any $\Pi_1$-definable $\nu$, there exists a countable $\Sigma^1_2$ set that does not support any $\Sigma^1_2$-rank of length less than $\sigma_\nu$. 
Let $A$ denote the set defined relative to $\nu$ in the beginning of Section \ref{section - the lower bound}. 
Let $B$ denote the countable $\Pi^1_1$ set of $L$-least codes for 
$L$-levels $L_\alpha$, where $\alpha$ is countable in $L$. 
Then 
$$ C:= \{ x\in B \mid \exists y\in A\ x\in L_{\alpha_y} \} = B \cap L_{\sigma_\nu}$$ 
is a countable $\Sigma^1_2$ set. 
It suffices to show that $C$ does not support a $\Sigma^1_2$-rank of length $\alpha<\sigma_\nu$. 
Suppose otherwise. 
Then $x\in B\setminus C=B\setminus L_{\sigma_\nu}$ can be expressed as: 
\begin{quote} 
``\emph{$x\in B$ and there exists a sequence $\langle x_i \mid i<\alpha\rangle$ of reals, strictly increasing in the rank, with $\forall i<\alpha\ x \not\sqsubseteq x_i$}.'' 
\end{quote} 
This statement $\psi(x)$ is $\Sigma^1_2$ in a real in $L_{\sigma_\nu}$, since $L_{\sigma_\nu}\prec_1 L_{\omega_1}$ 
and thus $\alpha$ is countable in $L_{\sigma_\nu}$. 
Then $B\setminus L_{\sigma_\nu}$ would contain a real in $L_{\sigma_\nu}$, since $\exists x\ \psi(x)$ is $\Sigma^1_2$ and $L_{\sigma_\nu}\prec_{\Sigma^1_2} L_{\omega_1}$. 

\ref{ranks on countable sets 2} 
The upper bound was already shown in the proof of Lemma \ref{ranks on countable sets} \ref{ranks on countable sets 2}. 

For the lower bound, we show that  
any $\Sigma^1_2$-rank on the $\Sigma^1_2$ set $A:= (2^\omega)^L$ has length precisely $\omega_1^L$. 
Towards a contradiction, suppose that there exists a $\Sigma^1_2$-rank of length $\alpha<\omega_1^L$ on $A$. 
Then $A$ is $\Pi^1_2$ in a real in $L$. 
The formula $\exists x\ x \notin L$ is then $\Sigma^1_2$, contradicting Shoenfield absoluteness. 
\end{proof} 

\begin{proposition} 
\label{example Pi12 sing short rank} 
The (strict) supremum of lengths of countable $\Sigma^1_2$-ranks on co-countable $\Sigma^1_2$ sets is $\tau$. 
Moreover: 
\begin{enumerate-(1)} 
\item 
\label{example Pi12 sing short rank 1}
If $\omega_1^L=\omega_1$, then for unboundedly many $\alpha<\tau$ there exist co-countable (in fact, co-singleton) $\Sigma^1_2$ sets that do not admit $\Sigma^1_2$-ranks shorter than $\alpha$. 
\item 
\label{example Pi12 sing short rank 2}
If $\omega_1^L$ is countable and $\Sigma^1_3$ Cohen absoluteness holds,\footnote{$\Sigma^1_3$ Cohen absoluteness is used only to apply Theorem \ref{characterisation countable ranks}; we do not know if this assumption is necessary.}  then 
any co-countable $\Sigma^1_2$ set that admits a countable $\Sigma^1_2$-rank admits one of length at most $\omega_1^L$. 
Moreover, there exist co-countable $\Sigma^1_2$ sets that admit $\Sigma^1_2$-ranks of length $\omega_1^L$, but no shorter ones. 
\end{enumerate-(1)} 
\end{proposition}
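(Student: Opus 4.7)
The upper bound of $\tau$ on the strict supremum is immediate from Proposition \ref{Kunen-Martin for Sigma12}. For the lower bound together with part \ref{example Pi12 sing short rank 1}, the plan is to construct, for each $\Pi_1$-definable ordinal $\nu < \tau$, a co-singleton $\Sigma^1_2$ set $A'_\nu = 2^\omega \setminus \{a_\nu\}$ admitting a $\Sigma^1_2$-rank of length $\sigma_\nu$ but no shorter $\Sigma^1_2$-rank. Here $a_\nu$ will be a $\Pi^1_2$-singleton of $L$-rank at least $\sigma_\nu$, which exists under $\omega_1^L = \omega_1$ by the final proposition of Section \ref{section robust vs stable}. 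The long rank on $A'_\nu$ is produced by extending the $\Sigma^1_2$-rank of length $\sigma_\nu$ on $\WO_{<\sigma_\nu}$ from Proposition \ref{Sigma12 long ranks} across the rest of $A'_\nu$, using the canonical form of Remark \ref{canonical choice for ranks} to keep the rank relations in $\Sigma^1_2$. Since the family $\{\sigma_\nu : \nu \text{ is } \Pi_1\text{-definable and } \nu < \tau\}$ is cofinal in $\tau$, this yields both the required unboundedness in \ref{example Pi12 sing short rank 1} and the strict-supremum lower bound.

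To rule out shorter $\Sigma^1_2$-ranks, I plan to mimic the proof of Proposition \ref{Sigma12 long ranks}. Suppose $\unlhd$ is a $\Sigma^1_2$-rank on $A'_\nu$ of length $\gamma < \sigma_\nu$ with a cofinal sequence $\vec{x}$. By overspill, $\{a_\nu\} = \{y : \forall i{<}\gamma\,(x_i \lhd y)\}$ (choosing $\vec{x}$ to include the maximum element if the rank has successor length), and the absorption $\forall i\,\Sigma^1_2 = \Sigma^1_2$ yields a $\Sigma^1_2(\vec{x}, \gamma)$-definition. The resulting $\Sigma^1_2(\gamma)$-statement ``there exist such $\vec{x}$ and $y$'' holds in $V$. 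Collapsing $\gamma$ by a $\Col(\omega,\gamma)$-generic $g$ over $L_{\sigma_\nu}$ and using $L_{\sigma_\nu}[g] \prec_1 L[g]$ (as in Proposition \ref{Sigma12 long ranks}, since $\gamma \in L_{\sigma_\nu}$), we obtain a witness for $a_\nu$ in $L_{\sigma_\nu}[g]$. Since $a_\nu \in L$ and $L_{\sigma_\nu}[g] \cap L = L_{\sigma_\nu}$, we conclude $a_\nu \in L_{\sigma_\nu}$, contradicting $L\text{-rank}(a_\nu) \geq \sigma_\nu$.

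For part \ref{example Pi12 sing short rank 2}, assume $\omega_1^L$ is countable and $\Sigma^1_3$ Cohen absoluteness holds. If $A' = 2^\omega \setminus B$ with $B$ countable $\Pi^1_2$ admits a countable $\Sigma^1_2$-rank, then $\Sigma^1_3$ Cohen absoluteness forces the rank to be Cohen stable (as in the proof of Theorem \ref{characterisation countable ranks}), and Lemma \ref{stable countable rank implies constructible} gives $B \in L_{\omega_1^L + 1}$. I will then construct a new $\Sigma^1_2$-rank on $A'$ of length at most $\omega_1^L$ using the $L$-order: rank $x \in A' \cap L$ by its $L$-rank, place non-$L$ reals as a single top class, and present the rank via Remark \ref{canonical choice for ranks} by a $\Sigma^1_2$-witness relation $\unlhd$ that absorbs the $\Pi^1_2$-looking condition ``$y \notin L$'' into the overspill clause. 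For the existence of examples achieving $\omega_1^L$ exactly, I plan to adapt the argument of Proposition \ref{ranks on countable Sigma12 sets}\ref{ranks on countable sets 2}: pick $A' = 2^\omega \setminus \{a\}$ for a suitable $\Pi^1_2$-singleton $a \in L$ of high $L$-rank, and argue that a $\Sigma^1_2$-rank of length $\alpha < \omega_1^L$ on $A'$ together with a cofinal sequence $\vec{x} \in L$ (obtained by Shoenfield) would produce a $\Sigma^1_2(\vec{x})$-statement witnessing the existence of a non-$L$ real, which fails in $L$ but holds in $V$ since $V \neq L$, contradicting Shoenfield absoluteness.

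The main technical obstacle is the explicit construction of the witness relation $\unlhd$ in Remark \ref{canonical choice for ranks} throughout these arguments: the naive definitions involve apparently $\Pi^1_2$-pieces such as ``$y \notin A'_\nu$'' or ``$y \notin L$'', and a genuine $\Sigma^1_2$-witness must be crafted that agrees with the intended prewellorder on the co-countable set and is automatically true on overspill, exploiting the specific structural features of the base rank and of the $L$-order.
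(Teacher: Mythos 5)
Your skeleton for the supremum claim and for part \ref{example Pi12 sing short rank 1} matches the paper's (upper bound from Proposition \ref{Kunen-Martin for Sigma12}, lower bounds from co-singleton examples excluded by overspill plus reflection into an initial segment of $L$), but two of the steps you sketch do not work as written and the parts you defer are where the substance lies. First, the reflection step in part \ref{example Pi12 sing short rank 1} is wrong: $L_{\sigma_\nu}[g]\cap L=L_{\sigma_\nu}$ fails in general, since a filter $g$ that is merely $\Col(\omega,\gamma)$-generic over the countable structure $L_{\sigma_\nu}$ may itself be constructible of $L$-rank above $\sigma_\nu$ (when $V=L$ every such $g$ is), so $L_{\sigma_\nu}[g]$ contains constructible reals outside $L_{\sigma_\nu}$ and no contradiction follows. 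The forcing detour is unnecessary: since $\gamma<\sigma_\nu$, $\omega_1^L=\omega_1$ and $L_{\sigma_\nu}\prec_1 L_{\omega_1}$, the ordinal $\gamma$ is already countable in $L_{\sigma_\nu}$, so your $\Sigma^1_2$ statement has a parameter in $L_{\sigma_\nu}$ and reflects there via $L_{\sigma_\nu}\prec_{\Sigma^1_2}L_{\omega_1}$, placing $a_\nu$ in $L_{\sigma_\nu}$ and contradicting $L$-rank at least $\sigma_\nu$ (the paper takes $a_\nu=x_{\sigma_\gamma}$ and contradicts admissibility of $\sigma_\gamma$). Second, the ``technical obstacle'' you flag is genuine -- clauses such as ``$y=a_\nu$'' and ``$y\notin\WO_{<\sigma_\nu}$'' are $\Pi^1_2$ and no $\Sigma^1_2$ witness for a long rank on $2^\omega\setminus\{a_\nu\}$ is in sight -- but it is also avoidable: part \ref{example Pi12 sing short rank 1} only asserts the non-existence of short ranks, and for the supremum one extends the $\Pi^1_1$ ranks of Theorem \ref{lower bound for lengths of Pi11 ranks} to the co-countable set $2^\omega$ by adding the $\Sigma^1_1$ complement as a bottom class, where overspill is vacuous. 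You should not route the lower bound through the co-singletons.

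Part \ref{example Pi12 sing short rank 2} is where the proposal genuinely breaks. For the upper bound, ranking $A'\cap L$ by $L$-rank with the non-constructible reals as one extra class hits the same definability wall (the overspill clause needs the $\Pi^1_2$ condition ``$y\in B$'' and the class boundary needs ``$y\notin L$''); the missing ingredient is the $\Sigma^1_2$ reduction property applied to $A'$ and $(2^\omega)^L$, which produces a $\Delta^1_2$ set $C$ with $B\subseteq C\subseteq(2^\omega)^L$. One then takes any $\Sigma^1_2$-rank on the countable $\Sigma^1_2$ set $C\setminus B$ (of length at most $\omega_1^L$ by Shoenfield and Kunen--Martin in $L$) and adds $2^\omega\setminus C$, which is $\Delta^1_2$, as a bottom class; this keeps all witnessing relations $\Sigma^1_2$. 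For the ``no shorter ones'' claim, your co-singleton example yields no contradiction: if $a\in L$ and a short rank exists, Shoenfield gives witnesses in $L$ for the resulting $\Sigma^1_2$ sentence, which is entirely compatible with $a\in L$, and I cannot identify a $\Sigma^1_2(\vec{x})$ sentence that would be true in $V$ but false in $L$. The paper's device is different and appears necessary: working in $L$, diagonalize to obtain a $\Sigma^1_2$ subset $B$ of the countable $\Pi^1_1$ set of $L$-least codes for $L$-levels that is a \emph{true} $\Sigma^1_2$ set in $L$; a rank of length below $\omega_1^L$ restricts to $L$ and exhibits $B$ as a countable union of ${\bf\Pi}^1_2$ initial segments, hence ${\bf\Pi}^1_2$ in $L$, contradicting trueness. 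That diagonalization is absent from your proposal and should replace the co-singleton argument in part \ref{example Pi12 sing short rank 2}.
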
 
\begin{proof} 
The first claim follows from Proposition \ref{Kunen-Martin for Sigma12} and Theorem \ref{lower bound for lengths of Pi11 ranks}. 

\ref{example Pi12 sing short rank 1} 
Suppose that $\gamma$ is a $\Pi_1$-definable countable ordinal. 
Note that $x_{\sigma_\gamma}$ is $\Pi_1$-definable from $\gamma$, since it is the unique real $x\in \WO$ such that for all $\alpha$ with $(\alpha,<)$ isomorphic to $(\omega,x)$: 
\begin{itemizenew} 
\item 
$\alpha=\sigma_\gamma$, i.e. 
for every $\beta\leq\gamma$ and every $\Sigma_1$-formula $\varphi(x)$ such that $\varphi(\beta)$ holds, $\varphi(\beta)$ holds already in $L_\alpha$, but there is no $L$-level before $L_\alpha$ that satisfies the same $\Sigma_1$-formulas about ordinals $\beta\leq\gamma$, and 
\item 
$x=x_\alpha$, i.e. 
for every admissible ordinal $\delta$ with $x\in L_\delta$, $x$ is the $L$-least real in $L_\delta$ with $\otp(x)=\alpha$. 
\end{itemizenew} 
Thus $x_{\sigma_\gamma}$ is a $\Pi^1_2$-singleton. 
We claim that the complement of the singleton $x_{\sigma_\gamma}$ does not admit a $\Sigma^1_2$-rank of length less than $\sigma_\gamma$. 
Towards a contradiction, suppose it admits a $\Sigma^1_2$-rank of length $\alpha<\sigma_\gamma$. 
Note that $\alpha$ is countable in $L_{\sigma_\gamma}$, since $L_{\sigma_\gamma}\prec_1 L_{\omega_1}$. 
Thus the next formula $\psi(x)$ defining $x_{\sigma_\gamma}$: 
\begin{quote} 
``\emph{There exists a strictly increasing sequence $\langle x_i \mid i<\alpha\rangle$ in the rank with $\forall i<\alpha\ x \not\sqsubseteq x_i$.}'' 
\end{quote} 
is $\Sigma^1_2$ in a real in $L_{\sigma_\gamma}$. 
We then have $x_{\sigma_\gamma} \in L_{\sigma_\gamma}$, since $\exists x\ \psi(x)$ is $\Sigma^1_2$ and $L_{\sigma_\gamma}\prec_{\Sigma^1_2} L_{\omega_1}$. 
But $x_{\sigma_\gamma} \notin L_{\sigma_\gamma}$, since $\sigma_\gamma$ is admissible. 

\ref{example Pi12 sing short rank 2} 
For the upper bound, 
suppose that $A$ is a co-countable $\Sigma^1_2$ set that admits a countable rank and $B$ is its complement. 
Using $\Sigma^1_3$ Cohen absoluteness, 
Theorem \ref{characterisation countable ranks} yields $B\in L$. 
By the $\Sigma^1_2$ reduction property for $A$ and $(2^\omega)^L$, 
there exists a $\Delta^1_2$ superset $C$ of $B$ that is contained in $(2^\omega)^L$. 
Pick any $\Sigma^1_2$-rank on $C\setminus B$ in $L$ of length at most $\omega_1^L$ and extend it to a $\Sigma^1_2$-rank  on $A$ in $V$ by adding the complement of $C$ as the least equivalence class. 

For the lower bound, we use the next claim: 

\begin{claim*} 
Assuming $V=L$, any uncountable $\Sigma^1_2$ set has a true\footnote{Recall this means $A$ is not ${\bf \Pi}^1_2$.} $\Sigma^1_2$ subset. 
\end{claim*} 
\begin{proof} 
Suppose that $A$ is an uncountable $\Sigma^1_2$ set. 
Fix $\Sigma_1$-definable enumerations $\langle x_\alpha \mid \alpha<\omega_1 \rangle$ of $A$ and  $\langle y_\alpha \mid \alpha<\omega_1 \rangle$ of all reals without repetitions. 
Let $\varphi(x,y)$ denote a universal $\Sigma_1$-formula, so any set of reals that is $\Sigma_1$-definable in a real parameter equals $B_\alpha:=\{ x \mid \varphi(x,y_\alpha)\}$ for some $\alpha<\omega_1$. 
Let 
$$ B:= \{ x \in 2^\omega \mid \exists \alpha<\omega_1\ (x=x_\alpha \wedge \varphi(x,y_\alpha)) \}. $$ 
We now show that $B$ is a true $\Sigma^1_2$ set, so $B$ is in particular nonempty. 
By the definition of $B$, we have $x_\alpha \in B \Leftrightarrow x_\alpha\in B_\alpha$ for all $\alpha<\omega_1$. 
If $B$ fails to be a true $\Sigma^1_2$ set, then $A\setminus B$ is $\Sigma_1$ in a real parameter and thus $A\setminus B= B_\alpha$ for some $\alpha<\omega_1$, but this is an immediate contradiction. 
\end{proof} 

Let $A$ denote the $\Pi^1_1$ set of $L$-least codes for $L$-levels. 
Working in $L$, pick a $\Sigma^1_2$ subset $B$ of $A$ by the previous claim. 
Since $A=A^L$, we can assume $B=B^L$ by replacing $B$ with $A\cap B$. 
Since $\omega_1^L$ is countable, $B$ is countable. 
It suffices to show that $B$ does not admit a $\Sigma^1_2$-rank of length less than $\omega_1^L$. 
Otherwise, the rank's restriction to $L$ is a countable $\Sigma^1_2$-rank on $B$ in $L$. 
But then $B$ would be ${\bf \Pi}^1_2$ in $L$, contradicting the previous claim. 
\end{proof}

\section{$\Sigma^1_2$ Borel sets} 
\label{section Sigma12 Borel sets} 

It is not hard to come up with examples of 
$\Sigma^1_2$ Borel sets that do not have \borel codes in $L$. 
For instance, the complement of the singleton $0^\#$ does not have such a code.  
To see this, suppose that there exists a constructible \borelalpha-code for the singleton $0^\#$, where $\alpha$ is countable in $V$. 
If $\alpha$ is countable in $L$, we would have $0^\#\in L$ by Shoenfield absoluteness. 
If $\alpha$ is uncountable in $L$, pick two mutually $\Col(\omega,\alpha)$-generic filters $g$ and $h$ in $V$ over $L$. 
We then obtain a contradiction, since $0^\#$ would have to be in $L[g]\cap L[h] =L$ by Shoenfield absoluteness.\footnote{This argument works for any non-constructible $\Pi^1_2$ singleton, assuming that for every $\gamma$ that is countable in $V$ but uncountable in $L$, there exists a $\Col(\omega,\gamma)$-generic filter over $L$ in $V$, as in the argument in Lemma \ref{stable countable rank implies constructible}.} 

Some other instances of this observation work in $\ZFC$.  
For instance, a non-constructible $\Sigma^1_2$ or $\Pi^1_2$ set cannot have a ${\bf\Sigma}^{({<}\omega_1)}_2$-code in $L$, since every closed set appearing in the code would then be contained in $L$. 

The main problem regarding these observations is whether the same can happen for $\Delta^1_2$ sets. 
More precisely, we ask whether all absolutely $\Delta^1_2$ Borel sets have \borel codes in $L$. 
We phrased the problem for absolutely $\Delta^1_2$ sets in order to make a positive answer more plausible, although we do not have a counterexample for $\Delta^1_2$ sets. 
Recall that an absolutely $\Delta^1_2$ set is one whose $\Sigma^1_2$ and $\Pi^1_2$ definitions agree in all generic extensions. 
Several previous results in this direction that we present below suggest a positive answer. 

Note that a $\Delta^1_2$ Borel set with a \borel code in $L$ already has a \borel code in $L_\tau$, since the existence of such a code is a $\Sigma_2$-statement; this connects the previous problem with Theorem \ref{characterisation countable ranks}.  


\begin{remark} 
In a stronger quantitative version,   
we ask whether all absolutely $\Delta^1_2$ sets that are ${\bf \Sigma}^0_\alpha$ have 
${\bf \Sigma}^{({<}\omega_1)}_\alpha$ codes in $L$. 
A positive solution to this version 
is analogous  to Louveau's celebrated separation theorem for $\Sigma^1_1$ sets \cite[Theorem A]{louveau1980separation}.
Such a result would unify and strengthen a number of classical and recent results in descriptive set theory. 
We list a few results that can be immediately obtained as corollaries as in the above observations. 
\begin{itemize} 
\item 
{\bf Shoenfield} absoluteness;\footnote{We mean $\Sigma^1_2$ absoluteness without parameters; the full version follows from a positive solution relative to reals.} 
this is equivalent to the statement that $L$ contains all $\Pi^1_1$ singletons, 
since any nonempty $\Pi^1_1$ set contains a $\Pi^1_1$ singleton by $\Pi^1_1$ uniformisation. 
\item 
{\bf Mansfield's} and {\bf Solovay's} theorem (see \cite[Theorem 25.23]{Je03}) stating that every countable $\Sigma^1_2$ set is contained in $L$; 
equivalently, every countable $\Pi^1_1$ set is contained in $L$ by $\Pi^1_1$-uniformisation. 
\item 
{\bf Kechris'}, {\bf Marker's} and {\bf Sami's} result that any $\Pi^1_1$ Borel set has an \borel code in $L_\tau$ \cite{MR1011178} (see Proposition \ref{sup of Borel ranks of Pi11 sets} below). 
\item 
{\bf Stern's} result that assuming $\omega_1$ is inaccessible in $L$, every (absolutely)\footnote{A positive answer for all $\Delta^1_2$ Borel sets would yield this claim for all $\Delta^1_2$ sets as in Stern's result.} $\Delta^1_2$ Borel set has a \borel code in $L$ (see \cite[Section 0.2, Theorem 2]{stern1984lusin} and Theorem \ref{Separating Sigma12-sets by a generic Borel set} below).  
\item 
{\bf Kanovei's} and {\bf Lyubetsky's} theorem that Cohen, random and Sacks forcing do not add new $\Delta^1_2$ Borel sets \cite[Theorem 1]{kanovei2019borel}. 
\end{itemize} 
\end{remark}





\subsection{$\Pi^1_1$ sets}

We first note where countable $\Pi^1_1$ and $\Sigma^1_2$ sets appear in $L$. 

\begin{proposition} 
The least ordinal $\alpha$ such that every countable $\Pi^1_1$ set $A$ is an element of $L_\alpha$ equals $\tau$ 
if $\omega_1^L$ is uncountable and $\omega_1^L+1$ otherwise. 
The same holds for countable $\Sigma^1_2$ sets. 
\end{proposition}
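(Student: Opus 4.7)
The plan is to handle the two classes $\Pi^1_1\subseteq\Sigma^1_2$ and the two cases for $\omega_1^L$ simultaneously: the class inclusion forces the $\Pi^1_1$-supremum to be at most the $\Sigma^1_2$-supremum, so it suffices to prove an upper bound on the $\Sigma^1_2$-supremum and a matching lower bound witnessed by countable lightface $\Pi^1_1$ sets.

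For the upper bound I would invoke the Mansfield--Solovay theorem to obtain $A\subseteq L$ for every countable $\Sigma^1_2$ set $A$. In the case $\omega_1^L=\omega_1$, the argument from the proof of Theorem \ref{characterisation countable ranks} for the implication \ref{characterisation countable ranks 2}$\Rightarrow$\ref{characterisation countable ranks 1} then applies: writing $A=\{y:\psi(y)\}$, the $\Sigma_2$-statement $\exists\alpha\,\forall y\,(\psi(y)\to y\in L_\alpha)$ holds in $L_{\omega_1}$, so by the definition of $\tau$ there is $\alpha<\tau$ with $A\subseteq L_\alpha$; then $A$ is $\Sigma_1$-definable over $L_{\sigma_\alpha}$, and $\sigma_\alpha<\tau$ by Lemma \ref{Sigma1 elementarity of Ltau}, yielding $A\in L_\tau$. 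In the case $\omega_1^L<\omega_1$, Shoenfield absoluteness combined with Mansfield--Solovay gives $A=A^L\subseteq L_{\omega_1^L}=H_{\omega_1}^L$; since $\Sigma^1_2$-subsets of $2^\omega$ are first-order definable over $H_{\omega_1}$, this places $A\in L_{\omega_1^L+1}$.

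For the lower bound in the case $\omega_1^L=\omega_1$, for each $\Pi_1$-formula $\varphi$ defining a countable ordinal $\nu$ I would use the countable lightface $\Pi^1_1$ set $B_\varphi$ from the proof of Proposition \ref{ranks on countable sets}\ref{ranks on countable sets 1}, consisting of the $L$-least representatives of the isomorphism classes of the set $A$ from Section \ref{section - the lower bound}. By the Skolem-hull computation recalled there, each such representative for a $\nu$-index $\alpha$ lies in $L_{\alpha+1}\setminus L_\alpha$, and the $\nu$-indices are cofinal in $\sigma_\nu$; by transitivity of the $L$-levels, $B_\varphi\not\subseteq L_\beta$ forces $B_\varphi\notin L_\beta$ for every $\beta<\sigma_\nu$. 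Since $\Pi_1$-definable ordinals are cofinal in $\tau$ by \cite[Lemma 4.1]{decisiontimes} and $\sigma_\nu>\nu$, varying $\varphi$ yields countable lightface $\Pi^1_1$ sets whose $L$-ranks are cofinal in $\tau$. In the case $\omega_1^L<\omega_1$ I would instead take the lightface $\Pi^1_1$ set $C$ of $L$-least codes for $L$-levels at which a new real appears: $C$ is countable in $V$ by hypothesis on $\omega_1^L$, but has $L$-cardinality $\omega_1^L$ by acceptability of the $L$-hierarchy, which forces $C\notin L_{\omega_1^L}$.

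The main obstacle will be confirming that the $L$-ranks of elements of $B_\varphi$ genuinely reach cofinally in $\sigma_\nu$: this rests on the Skolem-hull and acceptability computation embedded in Proposition \ref{ranks on countable sets}\ref{ranks on countable sets 1}, together with the observation made there that the ``$L$-least representative'' relation is $\Pi^1_1$ with only $\varphi$ as a natural-number parameter, so that $B_\varphi$ is genuinely lightface. The remaining $\Sigma_2$-complexity bookkeeping in the case $\omega_1^L=\omega_1$ upper bound is directly imported from the cited proof of Theorem \ref{characterisation countable ranks} and presents no additional difficulty.
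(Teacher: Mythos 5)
Your proposal is correct and follows essentially the same route as the paper: Mansfield--Solovay plus Shoenfield for the upper bounds (with the $\Sigma_2$-reflection into $L_\tau$ in the case $\omega_1^L=\omega_1$, and definability over $L_{\omega_1^L}$ otherwise), and the same witnessing families of countable lightface $\Pi^1_1$ sets for the lower bounds. One small repair: the claim that the $L$-least representative of a $\nu$-index $\alpha$ lies in $L_{\alpha+1}\setminus L_\alpha$ is not literally justified (the Skolem-hull argument only places it in $L_{\alpha+1}$, and non-admissible levels can contain codes for much larger ordinals); the conclusion you need, namely $B_\varphi\not\subseteq L_\beta$ for all $\beta<\sigma_\nu$, instead follows because $\sigma_\nu$ is a limit of admissibles, every real in $\WO\cap L_\gamma$ for admissible $\gamma$ has order type below $\gamma$, and $\otp[B_\varphi]$ is cofinal in $\sigma_\nu$.
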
 
\begin{proof} 
First suppose that $\omega_1^L$ is uncountable. 
For the upper bound,\footnote{This part does not use that $\omega_1^L$ is uncountable.}  note that every countable $\Sigma^1_2$ set is a subset of $L$ by the Mansfield-Solovay theorem. 
By Shoenfield absoluteness, $A$ is definable over $L$ and therefore, $A\in L$. 
Since $A$ is countable, there exists a countable ordinal $\alpha$ with $A\subseteq L_\alpha$. 
Since the statement ``\emph{$A\subseteq L_\alpha$}'' is $\Pi^1_2$, 
there exists some $\alpha<\tau$ with $A\subseteq L_\alpha$ by the definition of $\tau$. 
$\Pi^1_1$ sets witnessing the lower bound were constructed in the proof of Proposition \ref{ranks on countable sets} \ref{ranks on countable sets 1}. 

Now suppose that $\omega_1^L$ is countable. 
For the upper bound, 
note that any countable $\Sigma^1_2$ set is a subset of $L_{\omega_1^L}$ by the Mansfield-Solovay theorem. 
By Shoenfield absoluteness, it is definable over $L_{\omega_1^L}$ and thus an element of  $L_{\omega_1^L+1}$. 
For the lower bound, note that the set of $L$-least codes for $L$-levels is a countable $\Pi^1_1$ set whose elements have unbounded $L$-ranks below $\omega_1^L$. 
\end{proof} 

It is easy to construct a $\Pi^1_1$ set in $L$ that is not Borel in $L$, but it is countable in a generic extension of $L$;\footnote{This is discussed further in \cite[Section 6.3]{sami1984sigma11}.}  
for example, the usual counterexample to the perfect set property for $\Pi^1_1$ sets in $L$ has this property, since it becomes countable after collapsing $\omega_1$ over $L$. 
Hence it is consistent that there exist $\Pi^1_1$ Borel sets with no \borelomega-codes in $L$. 
The next proposition shows that this is impossible for \borel-codes. 

\begin{proposition}\footnote{This result is implicit in the proofs of \cite[Lemma 1.2 \& Theorem 1.4]{MR1011178}.} 
\label{sup of Borel ranks of Pi11 sets} 
Any $\Pi^1_1$ Borel set has a \borel code in $L_\tau$. 
Moreover, the Borel ranks of $\Pi^1_1$ Borel sets are unbounded in $\tau$. 
\end{proposition}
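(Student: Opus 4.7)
My plan is to handle the two parts of the proposition separately. \textbf{Upper bound.} To show every $\Pi^1_1$ Borel set $A$ has a \borel-code in $L_\tau$, I would first invoke Proposition~\ref{Borel Pi11 sets and countable ranks} to equip $A$ with a countable $\Pi^1_1$-rank $\phi$. Its strict part restricted to $A$ is a wellfounded $\Pi^1_1$ (hence $\Sigma^1_2$) relation of countable length $\alpha$, so Proposition~\ref{Kunen-Martin for Sigma12} gives $\alpha<\tau$. The rank decomposes $A=\bigcup_{\beta<\alpha}A_{\leq\beta}$ into bounded initial segments $A_{\leq\beta}=\{x\in A:\phi(x)\leq\beta\}$; by boundedness each $A_{\leq\beta}$ is Borel, with a canonical \borel-code $c_\beta$ extractable uniformly from $\beta$ and the fixed $\Pi^1_1$-definition of $\phi$. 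Assembling these yields a \borel-code $c$ for $A$. To see $c\in L_\tau$, I would argue that the sequence $(c_\beta)_{\beta<\alpha}$ is built by a $\Sigma_1$-recursion along $\alpha$ over $L_{\omega_1}$ with all parameters below $\tau$; since $L_\tau$ is closed under $\Sigma_2$-recursion along ordinals (per the remark after Lemma~\ref{Sigma1 elementarity of Ltau}), the sequence and hence $c$ lie in $L_\tau$.

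\textbf{Lower bound.} For the unboundedness statement, I would take, for each $\Pi_1$-definable $\nu<\tau$, the $\Pi^1_1$ set $A^\nu$ constructed at the start of Section~\ref{section - the lower bound}. By Theorem~\ref{lower bound for lengths of Pi11 ranks}, $A^\nu$ is Borel and carries a $\Pi^1_1$-rank of length $\sigma_\nu$; by Proposition~\ref{no short ranks} it admits no $\Pi^1_1$-rank of length less than $\sigma_\nu$. Since any Borel set of Borel rank $\rho$ carries a canonical $\Pi^1_1$-rank of length bounded by a countable function of $\rho$ (via the usual unraveling of the Borel hierarchy), the Borel rank of $A^\nu$ grows with $\sigma_\nu$. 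As $\sigma_\nu\geq\nu$ and the $\Pi_1$-definable ordinals are cofinal in $\tau$, the Borel ranks of $\Pi^1_1$ Borel sets are unbounded in $\tau$.

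The hard part will be the upper bound: explicitly arranging the Borel code extraction from a $\Pi^1_1$-rank as a $\Sigma_1$-recursion over $L_{\omega_1}$ with parameters below $\tau$. This requires an effective version of the boundedness argument that renders bounded $\Pi^1_1$-rank initial segments Borel, after which the closure properties of $L_\tau$ via $L_\tau\prec_1 L_{\omega_1}$ deliver the code in $L_\tau$. The lower bound is comparatively routine given the material already developed in Section~\ref{section - the lower bound}.
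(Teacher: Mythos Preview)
Your upper bound argument, while plausibly salvageable, is considerably more indirect than the paper's. The paper simply observes that any $\Pi^1_1$ set $A$ is $f^{-1}(\WO)$ for a computable $f$; since $A$ is Borel, boundedness gives $A=f^{-1}(\WO_{\leq\alpha})$ for some countable $\alpha$, and the statement ``$\alpha$ bounds $f[A]$'' is $\Pi_1$, so such an $\alpha$ already exists below $\tau$. Since each $\WO_{\leq\alpha}$ has a \borel\ code in $L_\tau$ by a straightforward induction on $\alpha<\tau$, so does $A$. This bypasses any analysis of rank layers or recursion inside $L_{\omega_1}$, and avoids the part you yourself flag as hard.

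Your lower bound argument has a genuine gap. The key implication you invoke---that a $\Pi^1_1$ Borel set of Borel rank $\rho$ admits a $\Pi^1_1$-rank whose length is bounded by a countable function of $\rho$---is false, and the paper itself supplies the counterexamples. Proposition~\ref{ranks on countable sets}~\ref{ranks on countable sets 1} shows (assuming $\omega_1^L=\omega_1$) that there are \emph{countable} $\Pi^1_1$ sets, hence of Borel rank $2$, admitting no $\Pi^1_1$-rank shorter than $\alpha$ for unboundedly many $\alpha<\tau$. So a fixed Borel rank does not bound the minimal $\Pi^1_1$-rank length, and your contrapositive cannot force the Borel rank of $A^\nu$ to be large. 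The paper instead constructs, for each $\Pi_1$-definable $\delta>\omega^\alpha$, a specific $\Pi^1_1$ Borel set one of whose slices is $\WO_\delta$, and then invokes Stern's theorem that $\WO_\delta$ has Borel rank at least $\alpha$.
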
 
\begin{proof} 
Suppose that $A$ is a $\Pi^1_1$ Borel set. 
Then there exists a computable function $f\colon 2^\omega \rightarrow 2^\omega$ with $A=f^{-1}(\WO)$. 
Since $A$ is Borel, $f[A]$ is contained in $\WO_\alpha$ for some $\alpha<\omega_1$ by the boundedness lemma. 
Since $A$ is $\Pi^1_1$, the statement ``$\alpha$ is an upper bound for $f[A]$'' is $\Pi_1$. 
Hence there exists an upper bound $\alpha<\tau$. 
Since $\WO_\alpha$ has a Borel${}^{({<}\tau)}$-code in $L_\tau$ by induction for all $\alpha<\tau$, $A$ does. 

To show that the Borel ranks are unbounded in $\tau$, 
suppose that $\delta>\omega^\alpha$ is a $\Pi_1$-singleton defined by $\varphi(x)$. 
Let 
$$A=\{(x,y)\in \WO^2\mid \text{$\alpha_y$ is the least $\alpha$ such that } L_\alpha\models ``\varphi \text{ defines } \alpha_x" \} .$$
Let further $\xi>\delta$ be least with $L_\xi\models ``\varphi$ defines $\delta"$. 
Note that for any $(x,y)\in A$, we have $\alpha_x\leq\delta$ and $\alpha_y\leq\xi$. 
Since for each $\xi$, the set $\WO_\xi$ of \borelomega codes for $\xi$ is Borel, 
$A$ is a countable union of Borel sets and thus Borel. 
Letting $y=x_\xi$, we obtain the slice $\WO_\delta$. 
By \cite{sternborelrank},\footnote{See \cite[Lemma 1.3]{MR1011178}} $\WO_\delta$ has Borel rank at least $\alpha$.  
\end{proof} 

However, the proof does not provide a quantitative result. 
Does every $\Pi^1_1$ set that is ${\bf \Sigma}^0_\alpha$ have a 
${\bf \Sigma}^{({<}\omega_1)}_\alpha$ code in $L$? 
We shall see in Lemma \ref{weak sigma02} below that this is true for ${\bf \Sigma}^0_2$ sets, assuming $\Sigma^1_3$ Cohen absoluteness.

\subsection{$\Delta^1_2$ sets} 

The above problem has a positive answer both if $V$ is sufficiently far away from $L$ and if $V$ is sufficiently close to $L$. 
Stern gave a positive answer assuming $\omega_1$ is inaccessible in $L$ (see Theorem \ref{Separating Sigma12-sets by a generic Borel set} below). 
We now present a concise version of his argument for the reader. 

\begin{definition} 
Suppose that $\PP$ is a forcing, $p\in \PP$ and $\sigma$ is a $\PP$-name for a ${\bf \Sigma}^{(\omega)}_\alpha$ code. 
Then 
$$  B_{(\sigma/p)} := \{ x\in 2^\omega \mid \exists q\leq p\ \ q\Vdash_\PP x\in B_\sigma \}$$ 
is called a \emph{$\PP$-approximate ${\bf \Sigma}^0_\alpha$ set}. 
\end{definition} 


The next two results are implicit in \cite[Section 5]{stern1984lusin}. 

\begin{lemma}[Stern] 
\label{Stern lemma} 
Suppose that $M$ is a transitive model of $\KP$, $\PP$ is a forcing with $\tc(\PP)\in M$ and $\lambda=|\PP|^M\geq\omega$. 
\label{generic Sigma_alpha in L} 
Consider the following classes of sets: 
\begin{enumerate-(a)} 
\item 
\label{generic Sigma_alpha in L 1} 
$\PP$-approximate ${\bf \Sigma}^0_\alpha$ sets relative to names in $M$. 
\item 
\label{generic Sigma_alpha in L 2}
${\bf \Sigma}_\alpha^{(\lambda)}$ sets with codes in $M$. 
\end{enumerate-(a)} 
Then \ref{generic Sigma_alpha in L 1} is included in \ref{generic Sigma_alpha in L 2}. 
Moreover, the classes are equal if $\PP$ forces $\lambda$ to be countable. 
\end{lemma}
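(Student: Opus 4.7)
The plan is to prove the inclusion of \ref{generic Sigma_alpha in L 1} in \ref{generic Sigma_alpha in L 2} by induction on $\alpha$, and then derive the converse under the countability hypothesis by a naming argument. Throughout, we work inside $M$ and make use of definability of the forcing relation (available in $\KP$) to build codes that lie in $M$.

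For the forward inclusion, I would proceed by simultaneous induction on $\alpha$ for both the ${\bf \Sigma}^0_\alpha$ and ${\bf \Pi}^0_\alpha$ versions. At the base case $\alpha=1$, a name $\sigma\in M$ for a ${\bf \Sigma}^0_1$ code is (essentially) a name for a subset of $\omega$ listing basic open sets $U_n$, and one checks
\[
 B_{(\sigma/p)} \;=\; \bigcup\{U_n : \exists q\leq p\ \ q\Vdash_\PP \check n\in\sigma\}.
\]
The index set on the right belongs to $M$ by definability of forcing, so $B_{(\sigma/p)}$ has an $\omega$-code, hence a ${\bf \Sigma}^{(\lambda)}_1$ code, in $M$. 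For the inductive step, decompose a name $\sigma$ for a ${\bf \Sigma}^0_\alpha$ code along a maximal antichain $A\in M$ below $p$ so that below each $r\in A$ the name refines to names $\sigma_{r,n}$ for individual ${\bf \Pi}^0_{\beta_{r,n}}$ codes with $\beta_{r,n}<\alpha$ decided. Then
\[
 B_{(\sigma/p)}\;=\;\bigcup\{B_{(\sigma_{r,n}/r)} : r\in A,\ n\in\omega\},
\]
an index set of size at most $|\PP|^M\cdot\omega=\lambda$. Each summand has a ${\bf \Pi}^{(\lambda)}_{\beta_{r,n}}$ code in $M$ by induction, and using $\Sigma_1$-recursion and $\Delta_0$-collection in $\KP$ these codes assemble into a single ${\bf \Sigma}^{(\lambda)}_\alpha$ code in $M$. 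The ${\bf \Pi}$-case is dual.

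For the reverse inclusion under the hypothesis that $\PP$ forces $\lambda$ to be countable, fix a ${\bf \Sigma}^{(\lambda)}_\alpha$ code $c\in M$ and let $\dot f\in M$ be a $\PP$-name for a bijection $\omega\to\lambda$. Pulling $c$ back along $\dot f$ produces a $\PP$-name $\sigma\in M$ for an $\omega$-code (i.e.\ a ${\bf \Sigma}^0_\alpha$ code) with $\mathbf{1}\Vdash_\PP B_\sigma = B_c$. I would then verify $B_{(\sigma/\mathbf{1})}=B_c$: for $x\in B_c$, since $\mathbf{1}\Vdash \check x\in B_\sigma$ we have $x\in B_{(\sigma/\mathbf{1})}$; conversely, if some $q\Vdash \check x\in B_\sigma$, choose a generic containing $q$ and use the absoluteness of membership in a set defined by a Borel code to conclude $x\in B_c$.

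The main obstacle is the inductive step in the forward direction: a name $\sigma$ for a ${\bf \Sigma}^0_\alpha$ code need not decide the subordinate ranks $\beta_n$ or the sub-codes, so one must refine by a maximal antichain and carry out the bookkeeping inside the admissible model $M$. Keeping every step $\Sigma_1$-definable so that collection and replacement in $\KP$ apply — while simultaneously tracking the growth of the outer index set so it does not exceed $\lambda$ — is the technical heart of the argument.
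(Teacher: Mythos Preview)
Your overall shape---induction on $\alpha$ using definability of forcing, and the naming argument for the converse---is correct and matches the paper. The base case and the reverse inclusion are fine.

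The gap is in your inductive step. You assert a simultaneous $\Sigma/\Pi$ induction and claim that each summand $B_{(\sigma_{r,n}/r)}$, where $\sigma_{r,n}$ is a name for a ${\bf\Pi}^0_{\beta}$ code, has a ${\bf\Pi}^{(\lambda)}_{\beta}$ code by the ``dual'' hypothesis. But the operator $B_{(\tau/r)}=\{x:\exists q\leq r\ q\Vdash x\in B_\tau\}$ is existential and does \emph{not} dualize: if $\tau$ names the complement of $B_\rho$ then
\[
B_{(\tau/r)}=\{x:\exists q\leq r\ q\Vdash x\notin B_\rho\}=\bigcup_{q\leq r}\bigl(2^\omega\setminus B_{(\rho/q)}\bigr),
\]
a $\lambda$-union of ${\bf\Pi}^{(\lambda)}_\beta$ sets, hence ${\bf\Sigma}^{(\lambda)}_{\beta+1}$ rather than ${\bf\Pi}^{(\lambda)}_\beta$. (Concretely: with Cohen forcing and a name that below the $n$th condition codes the closed singleton $\{x_n\}$ for a non-closed sequence $\langle x_n\rangle$, the approximate set is $\{x_n:n\in\omega\}$, which is not closed.) So the ``$\Pi$-case is dual'' step fails as stated.

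The paper avoids this by \emph{not} running a separate $\Pi$ induction and by \emph{not} passing to an antichain. It takes the sub-names $\sigma_n$ for the inner ${\bf\Sigma}^0_{\alpha_n}$ codes (so that $B_\sigma=\bigcup_n(2^\omega\setminus B_{\sigma_n})$) and unfolds directly:
\[
x\in B_{(\sigma/p)}\ \Longleftrightarrow\ \exists r\leq p\ \exists n\ r\Vdash x\notin B_{\sigma_n}\ \Longleftrightarrow\ \exists r\leq p\ \exists n\ x\notin B_{(\sigma_n/r)}.
\]
Now the $\Sigma$-induction alone gives each $B_{(\sigma_n/r)}$ a ${\bf\Sigma}^{(\lambda)}_{\alpha_n}$ code in $M$; taking complements and the $\lambda$-indexed union over $(r,n)$ yields a ${\bf\Sigma}^{(\lambda)}_\alpha$ code in $M$. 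The antichain refinement you introduce is unnecessary overhead once you index over all $r\leq p$.
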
 

\begin{proof} 
\ref{generic Sigma_alpha in L 1}$\curvearrowright$\ref{generic Sigma_alpha in L 2}: 
We construct a translation by recursion in $M$ that sends a pair $(\sigma,p)$ as above to a ${\bf \Sigma}_\alpha^{(\lambda)}$-code for $B_{(\sigma/p)}$. 

For $\alpha=1$, suppose that $\sigma\in M$ is a $\PP$-approximate ${\bf \Sigma}_1^{(\omega)}$-code below $p$ for a ${\bf\Sigma}^0_1$ set of the form $\bigcup_{n\in\omega} N_{t_n}$, where $\vec{t}=\langle t_n \mid n\in\omega\rangle$ is a sequence in $2^{<\omega}$ 
and $\langle \dot{t}_n \mid n\in\omega \rangle\in M$ is a sequence of names for the $t_n$. 
Let $T$ denote the set of all $t\in 2^{<\omega}$ such that $q\Vdash \dot{t}_n=t$ for some $q\leq p$ and some $n\in\omega$. 
Since the atomic forcing relation is absolute to $M$, we have $T\in M$. 
As $B_{(\sigma/p)} = \bigcup_{t\in T} N_t$, 
we thus obtain a ${\bf \Sigma}^{(\omega)}_1$-code in $M$ for $B_{(\sigma/p)}$. 

For $\alpha>1$, suppose that $\vec{\alpha}=\langle \alpha_n \mid n\in\omega\rangle\in M$ is a sequence of ordinals below $\alpha$ and $\vec{\sigma}=\langle \sigma_n \mid n\in\omega\rangle\in M$ is a sequence of names, where $\sigma_n$ is a name for an ${\bf \Sigma}^{(\omega)}_{\alpha_n}$-code. 
Suppose that $\sigma$ is a name for a \borelomega code for the union of the complements of the sets $B_{\sigma_n}$. 
Recall that $x\in B_{(\sigma/p)}$ if and only if some $q\leq p$ forces $x\in B_\sigma$. 
Thus 
\begin{align*}
x\in B_{(\sigma/p)} &  \Longleftrightarrow \exists q\leq p\ \  q \Vdash_\PP x\in B_\sigma  \\ 
&  \Longleftrightarrow  \exists r\leq p\ \ \exists n\in\omega \ \   r\Vdash x \notin B_{\sigma_n}  \\ 
&  \Longleftrightarrow  \exists r\leq p\ \ \exists n\in\omega \ \ \neg [   \exists s\leq r\ \ s \Vdash x \in B_{\sigma_n} ] \\ 
&  \Longleftrightarrow  \exists r\leq p\ \ \exists n\in\omega \ \ x \notin B_{(\sigma_n/r)} \\ 
\end{align*} 
The last equivalence holds by the definition of $B_{(\sigma_n/r)}$. 
Since $|\PP|^M=\lambda$, these equivalences provide a ${\bf \Sigma}^{(\lambda)}_\alpha$-code in $M$ for $B_{(\sigma/p)}$. 

\ref{generic Sigma_alpha in L 2}$\curvearrowright$\ref{generic Sigma_alpha in L 1}: 
If $\PP$ forces $\lambda$ to be countable, then one can use a name for a a bijection between $\omega$ and $\lambda$ to define a name for a ${\bf \Sigma}^{(\omega)}_\alpha$-code from a ${\bf \Sigma}_\alpha^{(\lambda)}$-code. 
\end{proof} 

Write $\alpha^*:=\alpha-1$ if $\alpha$ is finite and $\alpha\geq1$, and $\alpha^*:=\alpha$ if $\alpha$ is infinite. 

\begin{theorem}[Stern] 
\label{Separating Sigma12-sets by a generic Borel set} 
If disjoint $\Sigma^1_2$ sets $A$, $B$ can be separated by a ${\bf \Sigma}^0_\alpha$ set for $\alpha\geq1$, then they can be separated by a  ${\bf \Sigma}_\alpha^{(\omega_{\alpha^*}^L)}$ set with a code in $L$. 
%
(This relativises to reals.) 
In particular, if $\omega_1$ is inaccessible in $L$, then every $\Delta^1_2$ Borel set has a \borel-code in $L$. 
\end{theorem}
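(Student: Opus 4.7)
The plan is to apply Lemma \ref{Stern lemma} with $M=L$ and a forcing $\PP\in L$ of $L$-cardinality $\omega_{\alpha^*}^L$ that collapses this cardinal to $\omega$, for instance $\PP=\Col(\omega,\omega_{\alpha^*}^L)^L$. It will then suffice to produce, inside $L$, a $\PP$-name $\sigma$ for a ${\bf \Sigma}^{(\omega)}_\alpha$-code such that the $\PP$-approximate set $B_{(\sigma/\mathbf{1})}$ separates $A$ from $B$ in $V$: Lemma \ref{Stern lemma} then converts $\sigma$ into a ${\bf \Sigma}^{(\omega_{\alpha^*}^L)}_\alpha$-code in $L$ for this approximate set, yielding the desired separator.

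To exhibit $\sigma$ I would argue by induction on $\alpha$. In the base case $\alpha=1$, one has $\omega_{\alpha^*}^L=\omega$ and the claim reduces to showing that disjoint $\Sigma^1_2$ sets separable by an open set in $V$ are separable by an open set already coded in $L$. My approach is to work with a canonical $L$-definable candidate open set roughly of the form $U_L=\bigcup\{N_s : p\Vdash_\PP N_s\cap B=\emptyset \text{ for some } p\in\PP\}$, and to verify via Shoenfield-style absoluteness applied to the $\Sigma^1_2$ predicate ``$N_s\cap B\neq\emptyset$'' that $U_L$ is itself a separator whenever some open separator for $A,B$ exists in $V$. For the inductive step, I would decompose the given ${\bf \Sigma}^0_\alpha$ separator as $S=\bigcup_n\overline{S_n}$ with each $S_n$ of type ${\bf \Sigma}^0_{\beta_n}$ for $\beta_n<\alpha$, apply the inductive hypothesis relative to each condition $r\leq p$ of $\PP$ and each $n\in\omega$ to obtain names $\sigma_n$ for lower-complexity approximate separators, and glue them into $\sigma$ via the identity
\[ x\in B_{(\sigma/p)} \Longleftrightarrow \exists r\leq p\ \exists n\in\omega\ x\notin B_{(\sigma_n/r)} \]
extracted in the proof of Lemma \ref{Stern lemma}. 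Note that $\omega_{\beta_n^*}^L\leq\omega_{\alpha^*}^L$, so the factor forcings used at lower levels embed into $\PP$ and the gluing takes place entirely inside $L$.

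The main obstacle is the base case: the raw statement ``there exists an open separator for $A,B$'' is prima facie $\Sigma^1_3$ in the parameters of $A,B$, hence not directly $\Sigma^1_2$-absolute to $L$. Overcoming this requires isolating a canonical $L$-definable candidate separator and showing, via a genericity and homogeneity argument for $\PP$, that it succeeds whenever any separator exists in $V$; this is the essential Stern insight that the inductive step then merely has to propagate level by level, the recursion formula above ensuring uniform assembly of the names $\sigma_n$ into a single $\PP$-name $\sigma\in L$.

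The corollary is then immediate. Given a $\Delta^1_2$ Borel set $D\subseteq 2^\omega$, apply the main statement to the disjoint $\Sigma^1_2$ sets $A=D$ and $B=2^\omega\setminus D$, separated in $V$ by $D$ itself, which is ${\bf \Sigma}^0_\alpha$ for some $\alpha<\omega_1$. Since $\omega_1$ is inaccessible in $L$, $\omega_{\alpha^*}^L<\omega_1$ for every countable $\alpha$, so the resulting ${\bf \Sigma}^{(\omega_{\alpha^*}^L)}_\alpha$-separator---necessarily equal to $D$---is a \borel-code for $D$ lying in $L$.
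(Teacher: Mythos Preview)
Your proposed induction on $\alpha$ has a real gap in the inductive step. When you write $S=\bigcup_n\overline{S_n}$ with $S_n\in{\bf\Sigma}^0_{\beta_n}$, the only constraint that passes down to level $\beta_n$ is $B\subseteq S_n$; there is no companion lightface $\Sigma^1_2$ set that $S_n$ is required to separate $B$ from. One would want something like $A\cap\overline{S_n}$, but that is only boldface $\Sigma^1_2$. Hence the inductive hypothesis---which is about separating a \emph{pair} of lightface $\Sigma^1_2$ sets---simply does not apply to the individual components, and the gluing identity you quote from Lemma~\ref{Stern lemma} cannot manufacture the missing separation property. Your base case does go through (the largest open set disjoint from $B$ has a code in $L$ since ``$N_s\cap B=\emptyset$'' is $\Pi^1_2$), but it cannot be propagated.

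The paper (following Stern) avoids induction on $\alpha$ entirely and uses a different forcing: $\PP=\Col(\omega,\omega_1^V)$ rather than $\Col(\omega,\omega_{\alpha^*}^L)^L$. The point of collapsing $\omega_1^V$ is that in the extension there is a real $x_0\in L[G]$ coding $\omega_1^V$, and the $\Sigma^1_2$ sets $A,B$ can be replaced by $\Sigma^1_1(x_0)$ sets $A',B'$ (obtained by bounding the $\Sigma_1$ witness search inside $L_{\omega_1^V}[x]$) that agree with $A,B$ on the ground-model reals. Now ``$B_c$ separates $A'$ from $B'$'' is $\Pi^1_1(x_0,c)$, so the existence of a ${\bf\Sigma}^{(\omega)}_\alpha$-separator transfers from $V[G]$ to $L[G]$ by Shoenfield, yielding a name $\sigma\in L$; and because the separation statement is $\Pi^1_1$ it is also forced over $V$, whence $B_{(\sigma/\mathbf{1})}$ separates $A$ from $B$ in $V$. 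A single application of Lemma~\ref{Stern lemma} then gives a ${\bf\Sigma}^{(\omega_1^V)}_\alpha$-code in $L$, and a short counting argument (the bottom level is open, hence ${\bf\Sigma}^{(\omega)}_1$; after removing repetitions each higher level contributes at most $\omega_{\alpha^*}^L$ constituents) reduces this to $\omega_{\alpha^*}^L$. The drop from $\Sigma^1_2$ to $\Sigma^1_1$ via collapsing $\omega_1^V$ is the essential idea you were missing.
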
 
\begin{proof} 
Suppose that $A$ and $B$ are disjoint $\Sigma^1_2$ sets and $C$ is a ${\bf \Sigma}^0_\alpha$ set that separates them. 
We shall also write $A$, $B$ and $C$ for the sets with the same definitions in other models. 
Fix a $\Col(\omega,\omega_1)$-generic filter $G$ over $V$. 
In $V[G]$, $C$ separates $A$ from $B$ by Shoenfield absoluteness. 

Since $G$ is generic over $L$, there exists a real $x_0\in L[G]$ that codes $\omega_1^V$. 
Recall that any $\Sigma^1_2$ set can be understood as given by a $\Sigma_1$ definition over $L_{\omega_1}[x]$ in the input $x$. 
Applied to $A$ and $B$ in $V[G]$, we obtain subsets $A'$, $B'$ of $A$ and $B$, respectively, by restricting the definitions to $L_{\omega_1^V}[x]$. 
Then 
$A'$ and $B'$ are $\Sigma^1_1$-definable in $x_0$ and 
$$A'\cap V=A\cap V$$ 
$$B'\cap V= B\cap V.$$ 
Recall that in $V[G]$, there exists a ${\bf \Sigma}^0_\alpha$ set, namely $C$, separating $A'$ from $B'$. 
By Shoenfield absoluteness for $L[G]$ and $V[G]$, there exists some ${\bf \Sigma}^{(\omega)}_\alpha$-code for a set with this property in $L[G]$. 
Let $\sigma\in L$ be a nice $\PP$-name for a ${\bf \Sigma}^{(\omega)}_\alpha$-code with 
$$ \text{$\Vdash_{\Col(\omega,\omega_1)}^L $``\emph{$B_\sigma$ separates $A'$ from $B'$}\ ''.} $$ 
Since the forced statement is $\Pi^1_1$ in $x_0$, it is also forced over $V$ by $\Sigma^1_1$-absoluteness between $\Col(\omega,\omega_1)$-generic extensions $L[G]$ and $V[G]$. 
Therefore in $V$, $B_{(\sigma,1)}$ 
separates $A$ from $B$. 

By Lemma \ref{generic Sigma_alpha in L}, $B_{(\sigma,1)}$ has a ${\bf \Sigma}_\alpha^{(\omega_1)}$-code in $L$. 
It remains to show that $B_{(\sigma,1)}$ has a ${\bf \Sigma}_\alpha^{(\omega_{\alpha^*}^L)}$-code in $L$. 
In fact, take any ${\bf \Sigma}^{(\infty)}_\alpha$-code in $L$. 
We can assume that each \borelinfty-code appears at most once within the code for a single union or intersection by recursively removing additional ones. 
Every ${\bf\Sigma}_1^{(\infty)}$ set is open and thus has a ${\bf\Sigma}_1^{(\omega)}$-code. 
It follows by induction that every ${\bf \Sigma}^{(\infty)}_\alpha$-code in $L$ is in fact a ${\bf \Sigma}^{(\omega_{\alpha^*}^L)}_\alpha$-code. 
\end{proof} 

We now show that proper forcing does not introduce new absolutely $\Delta^1_2$ Borel sets. 
Hence the above problem has a positive solution if $V$ is an extension of $L$ by proper forcing. 
This is inspired by a result by Kanovei and Lyubetsky \cite[Theorem 1]{kanovei2019borel} for Cohen, random and Sacks forcing. 


\begin{theorem} 
\label{proper forcing no new sets} 
Suppose that $V$ is a generic extension of 
$M$ by proper forcing. 
Then any absolutely $\Delta^1_2$ set that is ${\bf\Sigma}^0_\alpha$ has a ${\bf\Sigma}^{(\omega)}_\alpha$-code in $M$.\footnote{If $V$ is a $\PP$-generic extension of $M$, then we only need that the $\Delta^1_2$-definition remains valid in all $\PP$-generic extensions of $V$.}  
\end{theorem}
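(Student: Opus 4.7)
The plan is to adapt the strategy from Theorem \ref{Separating Sigma12-sets by a generic Borel set}, replacing its collapse argument with an appeal to properness. Let $\PP\in M$ be proper with $V=M[G]$, suppose $A\in V$ is absolutely $\Delta^1_2$ and ${\bf\Sigma}^0_\alpha$, and fix $\Sigma^1_2$ and $\Pi^1_2$ definitions $\phi,\psi$ of $A$ that agree in every further $V$-generic extension. Let $c\in V$ be a ${\bf\Sigma}^{(\omega)}_\alpha$-code for $A$, pick a $\PP$-name $\dot c\in M$ for $c$, and a condition $p\in G$ forcing that $\dot c$ codes the set defined by $\phi$.

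The first step is to show that the $\PP$-approximate set $B_{(\dot c/p)}$ defined in $V$ equals $A$. The inclusion $A\subseteq B_{(\dot c/p)}$ is immediate: for $x\in A$, some $r\in G$ forces $\phi(\check x)$, so $r\wedge p$ forces $\check x\in B_{\dot c}$. For the converse, if $q\leq p$ forces $\check x\in B_{\dot c}$, then in any $V$-generic extension $V[H]$ with $q\in H$ (taken in a suitable outer universe, using the product lemma to view $M[H]\subseteq V[H]$) one has $\phi(x)$ by the choice of $p$, and Shoenfield $\Sigma^1_2$-absoluteness then yields $\phi(x)$ in $V$, hence $x\in A$. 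Applying Lemma \ref{generic Sigma_alpha in L} gives a ${\bf\Sigma}^{(\lambda)}_\alpha$-code for $A$ in $M$, where $\lambda=|\PP|^M$.

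The essential new ingredient is reducing $\lambda$ to $\omega$ using properness. Inside $M$, fix a sufficiently large regular $\theta$ and consider countable $N\prec H_\theta$ with $\PP,\dot c,p\in N$. By properness, the $(N,\PP)$-generic conditions below $p$ (ranging over such $N$) are dense below $p$, so one can choose a maximal antichain $\mathcal A$ below $p$ of conditions $q$, each of which is $(N_q,\PP)$-generic for some countable $N_q$; by absorbing the choice into a slightly larger countable $N^{*}\prec H_\theta$, $\mathcal A$ may be taken countable and in $M$. Below each $q\in\mathcal A$, $(N_q,\PP)$-genericity should imply that membership in $B_{(\dot c/q)}$ is already witnessed by conditions in the countable set $\PP\cap N_q$. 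Applying Lemma \ref{generic Sigma_alpha in L} to the transitive collapse of $\PP\cap N_q$, which is a countable forcing in $M$, then yields a ${\bf\Sigma}^{(\omega)}_\alpha$-code $d_q\in M$ for $B_{(\dot c/q)}$, and combining the countably many $d_q$ gives a ${\bf\Sigma}^{(\omega)}_\alpha$-code in $M$ for $A$.

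The main obstacle is verifying the reflection claim: that below an $(N,\PP)$-generic condition $q$, the $\PP$-approximation of $\dot c$ coincides with the approximation using only conditions in $\PP\cap N$. For $x\in N$ this follows directly by applying $(N,\PP)$-genericity to the dense-below-$q$ set of conditions deciding $\check x\in B_{\dot c}$. For $x\in V\setminus N$ the argument is more delicate and uses the absolute $\Delta^1_2$-ness of $A$ to transfer decisions about $x$ into $N_q$ via the agreement of $\phi$ and $\psi$ across generic extensions. A secondary bookkeeping issue is ensuring the antichain $\mathcal A$ can be taken countable, addressed by the reflection into $N^{*}$.
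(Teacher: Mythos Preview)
Your approach diverges from the paper's and leaves the central step unjustified.

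The paper does not pass through Stern's approximation machinery at all. Instead it argues directly that a ground-model Borel code exists. The key device is a product argument: if $\sigma_0,\sigma_1$ are the two copies of the name $\dot c$ in $\PP\times\PP$, then $(1,1)$ forces $B_{\sigma_0}=B_{\sigma_1}$, because in the product extension both sets agree with the absolute $\Delta^1_2$ definition. From this, one takes a countable $N\prec H_\theta$ in the ground model containing $\PP,\dot c,p$, a master condition $q\le p$, and any $(\PP\cap N)$-generic $g\in M$ over $N$ (which exists since $N$ is countable in $M$). The product claim, transported to the transitive collapse of $N$, shows that $\sigma^h\sim\sigma^k$ for all $(\PP\cap N)$-generics $h,k$ over $N$; in particular $\sigma^g\sim\sigma^{H\cap N}$ for any true generic $H\ni q$. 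Hence $q$ forces $\sigma\sim\check{\sigma^g}$, and $\sigma^g\in M$ is the desired ${\bf\Sigma}^{(\omega)}_\alpha$-code.

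Your route instead obtains a ${\bf\Sigma}^{(\lambda)}_\alpha$-code in $M$ via Lemma~\ref{generic Sigma_alpha in L} (this part is fine; your step~2 is essentially the argument of Proposition~5.4), and then attempts to shrink $\lambda$ to $\omega$ by a ``reflection claim'': that below a master condition $q$, membership in $B_{(\dot c/q)}$ is witnessed by conditions in $\PP\cap N_q$. This is exactly where the argument breaks down. First, $q$ is typically not in $N_q$, so there are no $r\le q$ in $\PP\cap N_q$; the statement needs reformulation. More seriously, for $x\in V\setminus N_q$ you offer no mechanism to transfer a witness from $\PP$ into $\PP\cap N_q$; genericity of $q$ only reflects dense sets that lie in $N_q$, and the set of conditions deciding ``$\check x\in B_{\dot c}$'' for $x\notin M$ is not such a set. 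Making this work requires knowing that the Borel code is essentially independent of the generic --- i.e., precisely the product argument the paper uses --- at which point one obtains a ground-model code directly and the detour through Stern's lemma is superfluous. (The side remark that the antichain $\mathcal A$ can be made countable ``by absorbing into $N^*$'' is also unjustified: proper forcings can have uncountable maximal antichains, and reflecting into a countable model does not change this. Fortunately a single master condition suffices, so this issue is not fatal.)
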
 
\begin{proof} 
Suppose that $V$ is an extension of $M$ by a proper forcing $\PP\in M$. 
For any formula $\varphi(x)$, write 
$S_\varphi=\{x\in2^\omega\mid \varphi(x)\}$. 
Suppose that $p\in \PP$ forces over $M$ that $S_\varphi$ is a ${\bf \Sigma}^0_\alpha$ set. 
Let $\PP^2=\PP_0\times\PP_1$, where $\PP_i=\PP$ for $i\leq 1$. 
Let $\sigma_0$, $\sigma_1$ be $\PP^2$-names for the evaluations of $\sigma$ by the two generics, 
i.e., $\sigma_i^{G_0\times G_1}=\sigma^{G_i}$ for any $\PP^2$-generic filter $G_0\times G_1$ over $V$. 

Consider the equivalence relation $\sim$ on the set of \borelomega codes defined by $x\sim y$ if $B_x=B_y$. 
It is absolute, as it is $\Pi^1_1$. 

\begin{claim*} 
$(1,1)\Vdash_{\PP^2} \sigma_0\sim \sigma_1$. 
\end{claim*} 
\begin{proof} 
Towards a contradiction, suppose there exist $p,q\in \PP$ with 
$(p,q)\Vdash_{\PP^2} \sigma_0\not\sim \sigma_1$.  
Let $G\times H$ be $\PP^2$-generic over $V$ with $p\in G$ and $q\in H$. 
Then $\sigma^G \not\sim \sigma^H$ and thus $B_{\sigma^G}\neq B_{\sigma^H}$ in $V[G\times H]$. 
But both $B_{\sigma^G}$ and $B_{\sigma^H}$ agree with the absolute $\Delta^1_2$ definition in $V[G\times H]$, since this agreement is a $\Pi^1_2$ statement. 
\end{proof} 

We now show that densely many $q \in \PP$ force that $\sigma$ is equivalent to a \borelomega code in $V$. 
Fix any $p\in \PP$. 
Since $\PP$ is proper, there exist a cardinal $\theta>2^{|\PP|}$, 
a countable substructure $M\prec H_\theta$ with $\PP, p, \sigma\in M$ and a master condition $q\leq p$ for $M$, i.e. such that for every predense set $A\in M$, $A\cap M$ is predense below $q$. 
Let $g\in V$ be $(\PP\cap M)$-generic over $M$ and $H$ $\PP$-generic over $V$ with $q\in H$. 
Since $q$ is a master condition, $(H\cap M)$ is $(\PP\cap M)$-generic over $M$. 
The next claim suffices. 


\begin{claim*} 
For all $(\PP\cap M)$-generic filters $h,k$ over $M$, we have $\sigma^h\sim \sigma^k$. 
\end{claim*} 
\begin{proof}  
Let $\bar{M}$ denote the transitive collapse of $M$, $\bar{\sigma}$ the preimage of $\sigma$, and $\bar{\PP}$, $\bar{h}$, $\bar{k}$ the pointwise preimages of $(\PP\cap M)$, $h$ and $k$.\footnote{The reason for working with transitive models is that the product lemma can fail for forcing over non-transitive models.}  
Then $\bar{\sigma}^{\bar{h}}=\sigma^h$ and $\bar{\sigma}^{\bar{k}}=\sigma^k$. 
Let $\bar{l}$ be $\bar{\PP}$-generic over both $\bar{M}[\bar{g}]$ and $\bar{M}[\bar{h}]$ and $l$ its pointwise image. 
Again, $\bar{\sigma}^{\bar{l}}=\sigma^l$. 
By the previous claim and absoluteness of $\sim$, we have $\bar{\sigma}^{\bar{h}}\sim \bar{\sigma}^{\bar{l}}\sim \bar{\sigma}^{\bar{k}}$ and hence $\sigma^h\sim \sigma^l\sim \sigma^k$. 
\end{proof} 
Thus $\sigma^H\sim \sigma^g\in V$ as required. 
\end{proof} 


The next lemma shows that the main problem formulated above 
has a positive answer in some extensions of $L$ where $\omega_1^L$ is collapsed, but $\omega_1$ is accessible in $L$. 
For instance, this can happen if $\omega_1=\omega_2^L$. 
By a \emph{collapse}, we mean a forcing of the form $\Col(\omega,\nu)$ for some cardinal $\nu$. 

\begin{proposition} 
Suppose that $V$ is an extension of $M$ by a collapse. 
Then any absolutely $\Delta^1_2$ set that is ${\bf\Sigma}^0_\alpha$ has a ${\bf\Sigma}^{({<}\omega_1)}_\alpha$-code in $M$. 
\end{proposition}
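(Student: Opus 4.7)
The plan is to adapt the proof of Stern's theorem (Theorem \ref{Separating Sigma12-sets by a generic Borel set}) to this setting, taking $M$ in place of $L$ and $\PP := \Col(\omega,\nu) \in M$ in place of $\Col(\omega,\omega_1^V)$, exploiting that $\nu$ is already countable in $V = M[G]$ so the only collapse we need is the one that produced $V$.

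Let $\varphi,\psi$ be $\Sigma^1_2$-formulas witnessing that $A$ is absolutely $\Delta^1_2$, so $A_\varphi^V = A$ and the sets $A_\varphi, A_\psi$ defined by these formulas remain complementary in every generic extension. In $V$, the ${\bf \Sigma}^0_\alpha$-set $A$ separates $A_\varphi$ from $A_\psi$, so the $\Sigma^1_2$-statement ``there exists a ${\bf \Sigma}^{(\omega)}_\alpha$-code whose Borel set separates $A_\varphi$ from $A_\psi$'' holds in $V$. Imitating Stern's Skolem--L\"owenheim step, I would pass to a further $\Col(\omega,\omega_1^V)$-extension in which $\omega_1^V$ is coded by a real $x_0$, and restrict $\varphi,\psi$ to formulas $\varphi',\psi'$ that are $\Sigma^1_1$-in-$x_0$ and agree with $\varphi,\psi$ on reals in $V$. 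Shoenfield absoluteness between the $M$-based and $V$-based further extensions, together with the maximal principle, then yields a $\PP$-name $\dot c \in M$ for a ${\bf \Sigma}^{(\omega)}_\alpha$-code such that $\mathbf{1}_\PP \Vdash_M$ ``$B_{\dot c}$ separates $A_{\varphi'}$ from $A_{\psi'}$''.

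Applying Lemma \ref{Stern lemma} inside $M$, the $\PP$-approximate set $B_{(\dot c/\mathbf{1})}$ then has a ${\bf \Sigma}_\alpha^{(\lambda)}$-code in $M$ with $\lambda = |\PP|^M = \max(\nu,\omega)$. Since $\PP$ collapses $\nu$ to $\omega$, $\lambda < \omega_1^V$, and since $\nu$ is an infinite cardinal in $M$ (the trivial case aside), $\lambda$ is multiplicatively closed, so this is a ${\bf \Sigma}^{({<}\omega_1)}_\alpha$-code. To verify $B_{(\dot c/\mathbf{1})} = A$ in $V$: for $x \in A$, the forced separation together with $\varphi'^V(x)$ yields some $q \in G$ with $q \Vdash x \in B_{\dot c}$, so $x \in B_{(\dot c/\mathbf{1})}$; for $x \notin A$, any $q$ forcing $x \in B_{\dot c}$ would, in a $\PP$-generic extension $V[G']$ with $q \in G'$, place $x$ on the $\varphi'$-side of the separator, contradicting $\psi'^V(x)$ lifted to $V[G']$ by Shoenfield upward absoluteness.

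The main obstacle is the Skolem--L\"owenheim reduction step: arranging that the restricted separation statement is genuinely realised by a $\PP$-name in $M$. This requires carefully managing the auxiliary collapse used to code $\omega_1^V$, so that the resulting $\Sigma^1_1$-in-$x_0$ statement reflects down to a forcing statement over $M$, as well as absorbing any real parameters of $\varphi,\psi$ that lie in $V \setminus M$ into the coding real $x_0$.
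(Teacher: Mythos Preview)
Your approach is considerably more complicated than necessary, and the obstacle you identify at the end is a real one that you have not resolved: to run Stern's $\Sigma^1_1$-reduction you need to collapse $\omega_1^V$, not merely $\nu$, so the resulting name lives naturally in $M$ as a $\Col(\omega,\omega_1^V)$-name rather than a $\Col(\omega,\nu)$-name, and there is no obvious way to project it down.

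The paper's proof sidesteps this entirely by exploiting the \emph{absolute} $\Delta^1_2$ hypothesis more directly. Since $A$ is ${\bf\Sigma}^0_\alpha$ in $V=M[G]$, homogeneity of the collapse together with the maximal principle gives a $\PP$-name $\sigma\in M$ for a ${\bf\Sigma}^{(\omega)}_\alpha$-code with $\mathbf{1}\Vdash_\PP^M B_\sigma=A$. The crucial observation is that ``$B_\sigma=A$'' is a $\Pi^1_2$ statement in the parameter $\sigma^G$ (precisely because $A$ is absolutely $\Delta^1_2$), and is therefore upward absolute to further $\PP$-extensions: $\mathbf{1}\Vdash_\PP B_\sigma=A$ holds over $V$ as well. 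Then for any real $x\in V$ and any $p,q\in\PP$,
\[
p\Vdash x\in B_\sigma \iff p\Vdash x\in A \iff q\Vdash x\in A \iff q\Vdash x\in B_\sigma,
\]
using that ``$x\in A$'' is decided by $\mathbf{1}$ (it is $\Delta^1_2$ in the ground-model real $x$). Hence $A=B_{(\sigma/\mathbf{1})}$ exactly, and one applies Lemma~\ref{Stern lemma} as you do. No auxiliary collapse, no $\Sigma^1_1$-reduction, no separator argument is needed: the hypothesis that the $\Delta^1_2$ definition persists to generic extensions is exactly what replaces Stern's reduction step.
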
 
\begin{proof} 
Suppose that $\PP$ is a collapse in $M$ and 
$V$ is a $\PP$-generic extension of $M$. 
Suppose further that $A$ is an absolutely $\Delta^1_2$ set. 
We shall also write $A$ for the set with the same definition in other models. 

Suppose that $\sigma\in M$ is a $\PP$-name for a ${\bf\Sigma}^{(\omega)}_\alpha$-code such that 
$\mathbf{1}\Vdash_\PP^M B_\sigma=A$. 
Note that the statement $B_\sigma=A$ is $\Pi^1_2$, since $A$ is absolutely $\Delta^1_2$. 
Hence it is persistent to outer models. 
We thus have $1\Vdash_\PP B_\sigma=A$ in $V$. 
Therefore for all $p,q\in \PP$, 
$$ p\Vdash x\in B_\sigma \Longleftrightarrow p\Vdash x\in A \Longleftrightarrow q\Vdash x\in A   \Longleftrightarrow  q\Vdash x\in B_\sigma. $$ 
Thus $A$ is a $\PP$-approximate ${\bf \Sigma}^0_\alpha$ set with a name in $M$. 
By Lemma \ref{generic Sigma_alpha in L}, $A$ has a ${\bf \Sigma}^{(\nu)}_\alpha$-code in $M$, where $|\PP|^M=\nu$. 
Since $\PP$ is a collapse, $\nu$ is countable in $V$. 
\end{proof}

The next result is an attempt to weaken (in consistency strength) Stern's assumption that $\omega_1$ is inaccessible in $L$. 

\begin{proposition} 
\label{weak sigma02} 
If $\Sigma^1_3$ Cohen absoluteness holds, then 
any absolutely $\Delta^1_2$ set $A$ that is ${\bf \Sigma}^0_2$ has a 
${\bf \Sigma}^{({<}\omega_1)}_2$-code in $L$. 
\end{proposition}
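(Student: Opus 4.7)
The plan is to reduce to Stern's Theorem~\ref{Separating Sigma12-sets by a generic Borel set}. That theorem already yields, for $\alpha=2$, a ${\bf \Sigma}^{(\omega_1^L)}_2$-code in $L$ for any $\Sigma^1_2$ set separable from a disjoint $\Sigma^1_2$ set by a ${\bf \Sigma}^0_2$ set. The only remaining task is to guarantee that $\omega_1^L$ is a countable ordinal in $V$; the role of $\Sigma^1_3$ Cohen absoluteness will be precisely to secure this.

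First, I would invoke the standard fact that $\Sigma^1_3$ Cohen absoluteness implies $0^\#$ exists (in fact, that $x^\#$ exists for every real~$x$), via the Feng--Magidor--Woodin correspondence between boldface $\Sigma^1_3$ generic absoluteness under ccc forcing and $\boldsymbol{\Pi}^1_1$-determinacy, combined with Martin--Harrington. From $0^\#$ one infers that $\omega_1^V$ is inaccessible in $L$, so $\omega_1^L$ is a countable ordinal of $V$.

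Second, I would apply Stern's theorem. Since $A$ is $\Delta^1_2$, both $A$ and its complement $2^\omega\setminus A$ are disjoint $\Sigma^1_2$ sets, and they are trivially separated by the ${\bf \Sigma}^0_2$ set $A$ itself. Stern's theorem with $\alpha=2$ (so $\alpha^*=1$) yields a ${\bf \Sigma}^{(\omega_1^L)}_2$ set $B$ with code in $L$ separating them; thus $A\subseteq B$ and $B\cap(2^\omega\setminus A)=\emptyset$, forcing $B=A$. Since $\omega_1^L<\omega_1$ by the previous step, the resulting ${\bf \Sigma}^{(\omega_1^L)}_2$-code for $A$ in $L$ is in particular a ${\bf \Sigma}^{({<}\omega_1)}_2$-code in $L$.

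The main obstacle is the first step, transferring $\Sigma^1_3$ Cohen absoluteness into information about the size of $\omega_1^L$. While ``$0^\#$ exists'' is a classical consequence, a fully self-contained derivation involves the Martin--Solovay tree and its leftmost branch in the ground model versus Cohen extensions. An alternative that avoids sharps would be a direct Stern-style construction, producing a Cohen-$\mathbb{P}$-name $\sigma\in L$ for a ${\bf \Sigma}^0_2$-code of $A$ and then invoking Lemma~\ref{Stern lemma} to obtain a ${\bf \Sigma}^{(\omega)}_2$-code in $L$; however, placing the name inside $L$ (rather than in $L[G]$ for some generic~$G$) seems to require a genuine use of $\Sigma^1_3$ Cohen absoluteness to reflect a $\Sigma^1_3$-existence statement from $V$ down to $L$, so one way or another the hypothesis enters through a reflection/correctness argument.
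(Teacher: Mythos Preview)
Your first step contains a genuine gap: lightface $\Sigma^1_3$ Cohen absoluteness does \emph{not} imply that $0^\#$ exists, and in particular does not force $\omega_1^L<\omega_1$. The Feng--Magidor--Woodin correspondence you invoke concerns (two-step, boldface) $\Sigma^1_3$-absoluteness for a broad class of forcings such as all ccc or all proper forcing; one-step $\Sigma^1_3$-absoluteness for Cohen forcing alone is substantially weaker and is compatible with $\omega_1^L=\omega_1$. Indeed, the sentence immediately preceding the proposition states that its purpose is to \emph{weaken} (in consistency strength) Stern's hypothesis that $\omega_1$ is inaccessible in $L$; under your reading the hypothesis would be strictly stronger than Stern's, and the proposition would collapse to an immediate corollary of Theorem~\ref{Separating Sigma12-sets by a generic Borel set} with nothing to prove.

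The paper does begin, as you do, by quoting Stern's theorem to dispose of the case $\omega_1^L<\omega_1$, but then treats the remaining case $\omega_1^L=\omega_1$ by a direct Baire-category argument. One shows that $A$ is covered by countably many closed sets $[T]$ with $T\in L$ and $[T]\subseteq A$. Given any representation $A=\bigcup_n[T_n]$, each $[T_n]$ is covered, up to a countable set, by the relative interiors $\inter_{[T_n]}([T\cap T_n])$ for such $T\in L$. If the leftover were uncountable, its perfect kernel $[S]$ would make every $[T\cap S]$ nowhere dense in $[S]$; forcing with $S$ over $L[S]$ then produces a branch in $[S]\setminus A$, a contradiction. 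The hypothesis of $\Sigma^1_3$ Cohen absoluteness enters only here, to guarantee that such a generic branch already exists in $V$ --- a much more modest use than deducing sharps.
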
 
\begin{proof} 
By Lemma \ref{Separating Sigma12-sets by a generic Borel set}, $A$ has a ${\bf \Sigma}^{(\omega_1^L)}_2$-code in $L$. 
We can thus assume $\omega_1^L=\omega_1$.  
$A$ equals the union of all sets $[T]$, where $T\in L$ is a subtree of $2^{<\omega}$ with $[T]\subseteq A$. 
Let $\mathcal{T}$ denote the collection of all such trees $T$. 
Suppose $A=\bigcup_{n\in\omega} [T_n]$. 
Let $\inter_X(Y)$ denote the interior of $Y$ in the space $X$. 
For each $T\in \mathcal{T}$ and $n\in\omega$, let $U_n=\bigcup_{T\in \mathcal{T}} \inter_{[T_n]}([T\cap T_n])$. 
Since the sets $\inter_{[T_n]}([T\cap T_n])$ are open in $[T_n]$, countably many suffice to cover $U_n$. 

\begin{claim*} 
$[T_n]\setminus U_n$ is countable. 
\end{claim*} 
\begin{proof} 
Otherwise, let $S$ be the unique perfect subtree of $T_n$ such that $[S]$ is the perfect kernel of $[T_n]\setminus U_n$. 
By the definition of $U_n$, $[S\cap T]$ is nowhere dense in $[S]$ for any $T\in \mathcal{T}$. 
We now force with $S$ over $L[S]$. 
There exists a generic filter in $V$ by $\Sigma^1_3$ Cohen absoluteness. 
We thus find a real $x\in [S] \setminus \bigcup_{T\in \mathcal{T}}[T]$. 
However, this contradicts the fact that $[S]\subseteq [T_n]\subseteq A \subseteq \bigcup_{T\in \mathcal{T}}[T]$. 
\end{proof} 

For each $n\in\omega$, there exists a countable subset $\mathcal{T}_n$ of $\mathcal{T}$ with $[T_n]\subseteq \bigcup_{T\in \mathcal{T}_n} [T]$ by the previous claim. 
Hence there exists a countable subset $\mathcal{T}^*$ of $\mathcal{T}$ with $A \subseteq \bigcup_{T\in \mathcal{T}^*} [T]$. 


Since $\omega_1^L=\omega_1$, then $\mathcal{T}^*$ is contained in $\mathcal{T}\cap L_\alpha$ for some countable $\alpha$. 
Then the equality 
$$ A = \bigcup_{T\in \mathcal{T}\cap L_\alpha,\ [T]\subseteq A} [T] $$  
induces a ${\bf \Sigma}^{(\alpha)}_2$-code in $L$ for $A$. 
\end{proof} 



\subsection{$\Sigma^1_2$ sets} 

While any $\Delta^1_2$ Borel set with a \borel code in $L$ already has one in $L_\tau$, this is not the case for $\Sigma^1_2$ sets. 

\begin{proposition} 
\label{Sigma12 Borel of rank tau} 
There exists a $\Sigma^1_2$ Borel set of rank $\tau$ with a Borel${}^{(\tau)}$ code in $L$. 
\end{proposition}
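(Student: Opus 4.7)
The plan is to build $A$ as an $\omega$-indexed disjoint union of the sets $\WO_{\alpha_n}$ along a cofinal $\omega$-sequence $\vec\alpha=\langle\alpha_n\mid n<\omega\rangle$ in $\tau$ that lies in $L$. Such a sequence exists since $\cof(\tau)^L=\omega$, as noted after Theorem \ref{main theorem}; fix $\vec\alpha$ to be the $L$-least such sequence. Viewing $\omega\times 2^\omega$ as a standard Polish space, set
$$ A\ :=\ \{ (n,x)\in\omega\times 2^\omega \mid x\in\WO_{\alpha_n} \}. $$

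First I would verify $A\in\Sigma^1_2$. By Theorem \ref{main theorem}\ref{main theorem 1}\ref{main theorem 1b} the ordinal $\tau$ is $\Sigma_2$-definable over $L_{\omega_1}$, and once $\tau$ is available the $L$-least $\omega$-sequence cofinal in $\tau$ is uniformly $\Sigma_2$-definable in $n$; combined with the $\Sigma^1_1$ definition of ``$x\in\WO$ and $\otp(x)=\beta$'' in $x$ together with any code for $\beta$, this yields a $\Sigma^1_2$ definition of $A$. Next I would produce a Borel${}^{(\tau)}$-code for $A$ in $L$: for each $n$, the set $\WO_{\alpha_n}$ has a Borel${}^{({<}\tau)}$-code in $L_\tau$, using that $\tau$ is a limit of admissibles by Lemma \ref{Sigma1 elementarity of Ltau}; the sequence of $L$-least such codes is $\Sigma_2$-definable over $L_{\omega_1}$ from $\vec\alpha$ and thus lies in $L$; assembling these into a single Borel${}^{(\tau)}$-code, which is legal since $\tau$ is multiplicatively closed, gives a code for $A$ that lies in $L$.

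It remains to compute the Borel rank. The upper bound is immediate: $A$ is a countable union of Borel sets of rank ${<}\tau$, hence $A\in{\bf\Sigma}^0_\tau$ and $\rank(A)\leq\tau$. For the lower bound, the $n$-th section of $A$ equals $\WO_{\alpha_n}$, whose Borel rank is at least $\beta$ whenever $\alpha_n>\omega^\beta$ by \cite{sternborelrank} (as used in the proof of Proposition \ref{sup of Borel ranks of Pi11 sets}); since $\tau$ is admissible and therefore closed under $\beta\mapsto\omega^\beta$, these lower bounds are cofinal in $\tau$, and sections are continuous preimages of $A$, so $\rank(A)\geq\tau$. The main obstacle is coordinating the three requirements simultaneously: the indexing sequence must be simple enough ($\Sigma_2$-definable over $L_{\omega_1}$) that the resulting union is $\Sigma^1_2$ and its code lies in $L$, yet rich enough (cofinal in $\tau$, which uses $\cof(\tau)^L=\omega$) that the Borel rank is pushed all the way up to $\tau$; the $L$-least $\omega$-cofinal sequence in $\tau$ is precisely the object meeting both demands.
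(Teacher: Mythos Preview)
There is a genuine gap in your $\Sigma^1_2$-definability argument. You claim that ``by Theorem \ref{main theorem}\ref{main theorem 1}\ref{main theorem 1b} the ordinal $\tau$ is $\Sigma_2$-definable over $L_{\omega_1}$'', but that theorem says precisely the opposite: $\tau$ is the \emph{strict} supremum of the $\Sigma_2$-definable ordinals over $L_{\omega_1}$, so $\tau$ itself cannot be $\Sigma_2$-definable there (otherwise $\tau<\tau$). Consequently the $L$-least cofinal $\omega$-sequence $\vec\alpha$ in $\tau$, while it does exist in $L$ since $\cof(\tau)^L=\omega$, is not $\Sigma_2$-definable over $L_{\omega_1}$ either (it does not even lie in $L_\tau$), and you have given no $\Sigma^1_2$ definition of $\{(n,x)\mid x\in\WO_{\alpha_n}\}$. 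Your Borel-rank computation and the construction of a Borel${}^{(\tau)}$-code in $L$ are fine, but the complexity bound on $A$ is unsupported.

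The paper sidesteps this obstacle by not fixing a single cofinal $\omega$-sequence in advance. Instead it takes a universal $\Pi_1$-formula $\varphi(n,x)$ and lets $A$ collect, for each $n$, codes $(x,y)\in\WO^2$ for the least $\alpha$ with $\varphi_n(\alpha)$ together with the least $L$-level $L_{\alpha_y}$ at which this minimality becomes visible. The point is that the definition of $A$ refers only to individual $\Pi_1$-formulas and to first-order facts inside $L_{\alpha_y}$, never to $\tau$ itself, so $A$ is $\Pi^1_2$ (hence its complement is the desired $\Sigma^1_2$ set); yet the slices of $A$ land at ordinals cofinal in $\tau$ because the $\Pi_1$-definable ordinals are cofinal in $\tau$. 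If you want to repair your approach, you would need a cofinal $\omega$-sequence whose graph $n\mapsto\alpha_n$ is uniformly $\Sigma^1_2$ in $n$ without reference to $\tau$ --- and producing such a thing is essentially what the universal-formula trick accomplishes.
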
 
\begin{proof} 
The proof is similar to Proposition \ref{ctble Pi12 cofinal in tau}. 
We define a $\Pi^1_2$ set $A$ Borel set of rank $\tau$ with a Borel${}^{(\tau)}$ code in $L$. 
It collects codes for least elements $\alpha$ of $\Pi_1$-definable subsets of $\WO$ together with a real witnessing that $\alpha$ is least. 
In detail, let $\varphi(n,x)$ be a universal $\Pi_1$-formula, i.e., $\varphi(n,x)$ is $\Pi_1$ and $\langle \varphi_n(x) \mid n\in\omega\rangle$ enumerates all $\Pi_1$-formulas with one free variable, where $\varphi_n(x)=\varphi(n,x)$. 
$A$ denotes the set of $(x,y,n)\in \WO^2\times \omega$ such that, $\alpha_x<\alpha_y$, $\varphi_n(\alpha_x)$ holds, $L_{\alpha_y}$ sees that $\varphi_n(\beta)$ fails for all $\beta<\alpha_x$, but no $\gamma<\alpha_y$ sees this. 
Note that if $\varphi_n(x)$ defines a nonempty subset $B$ of $\WO$, then $\alpha_x\leq \min(B)$ for all $x$, $y$ with $(x,y,n)\in A$. 
Therefore $\alpha_x<\tau$ and $\alpha_y<\sigma_{\alpha_x}\leq \tau$ by Lemma \ref{Sigma1 elementarity of Ltau}. 
Thus $A$ is the union of slices of the form $\WO_\alpha\times \WO_\beta$ for unboundedly many $\alpha,\beta<\tau$. 
Since $\WO_{\leq\gamma}$ is not $\Pi^0_{2\cdot\delta}$ if $\gamma=\omega^\delta$ \cite{sternborelrank},\footnote{See \cite[Lemma 1.3]{MR1011178}.} it follows that the Borel rank of $A$ equals $\tau$. 
\end{proof}

\section{Conclusion and open questions} 

In Section \ref{section lengths of ranks}, we constructed various examples where the least rank on a $\Pi^1_1$ or $\Sigma^1_2$ set has a certain minimal countable length such as $\sigma$, $\sigma_\sigma$ etc. 
Assuming $\omega_1^L=\omega_1$, one can similarly obtain countable  sets with a unique length of ranks. 
These phenomena deserve to be explored further: at precisely which countable ordinals does a new minimal length appear? 
Assuming $\omega_1^L=\omega_1$, which ordinals appear as the unique length of ranks on a set? 

Proposition \ref{ranks on countable sets} 
shows that there is no provable bound for the lengths of countable ranks on $\Pi^1_1$ Borel sets of fixed rank. 
Is this also true assuming large cardinals? 
Is there a result in the converse direction: can one compute a bound for Borel ranks of $\Pi^1_1$ sets that admit ranks of a fixed countable length? 
 

In Section \ref{section Sigma12 Borel sets}, we mentioned our main open question about $\Delta^1_2$ Borel sets: 

\begin{question} 
Does every absolutely $\Delta^1_2$ Borel set have a \borel-code in $L$? 
\end{question} 

Stern proved this assuming $\omega_1$ is inaccessible in $L$. 
The only other similar result known to us is Louveau's separation theorem, but his proof technique fails here. 
Stern's result motivates us to ask for a similar generalisation of Theorem \ref{characterisation countable ranks}: 

\begin{question} 
Does a $\Sigma^1_2$ Borel set have a \borel-code in $L_\tau$ if and only if it admits a countable $\Sigma^1_2$-rank? 
\end{question} 

For instance, one can ask if this holds at the first and second levels of the Borel hiearchy. 
It is not clear if ranks are strong enough to guarantee the existence of a \borel-code in $L_\tau$. 
However, sets that admit ranks of the following form come with a Borel definition induced by the layers. 
Call a $\Sigma^1_2$-rank on a set $A$ \emph{uniform} if the initial segments are uniformly $\Sigma^1_1$ and $\Pi^1_1$ in the sense that there are $\Sigma^1_1$ and $\Pi^1_1$ formulas $\varphi(x,y)$ and $\psi(x,y)$ such that for each $y \in \WO_\alpha$, the statement that $x$ has rank at most $\alpha$ is equivalent to both $\varphi(x,y)$ and $\psi(x,y)$ and moreover, these formulas define a subset of $A$ for any fixed $y$ with $\otp(y)$ in the rank's range. 
It is easy to see that ranks induced by infinite time computations satisfy this property. 

\begin{question} 
Does a $\Sigma^1_2$ set have a \borel-code in $L_\tau$ if and only if it admits a uniform countable $\Sigma^1_2$-rank? 
\end{question}


Various other natural questions arise regarding the results in Sections \ref{section lengths of ranks} and \ref{section Sigma12 Borel sets}. 
What are the lengths of $\Pi^1_1$- and $\Sigma^1_2$-ranks on open and closed sets? 
Do the suprema of $L$-levels of countable $\Pi^1_2$ sets in $L$ and that of Borel ranks of $\Sigma^1_2$ Borel sets with a \borel code in $L$ equal $\tau$? 
Theorem \ref{characterisation countable ranks} \ref{characterisation countable ranks 3} suggests to ask whether every constructible $\Pi^1_2$ singleton contained in a countable $\Pi^1_1$ set, or otherwise how one can characterise $\Pi^1_2$ singletons with this property. 

Furthermore, we ask for generalisations of the present results to all projective levels. 
Firstly, one can ask whether the periodic pattern in Figure \ref{figure length of pwos} repeats itself at the next level: 

\begin{question} 
Assume $0^\#$ exists. 
Are the suprema of the values in the third column of Figure \ref{figure length of pwos} equal? 
\end{question} 

Assuming the axiom of projective determinacy (\PD), one can hope for results analogous to Theorem \ref{main theorem} and those shown in Figure \ref{figure length of pwos} for all projective levels. 
At the level of $\Pi^1_3$ sets, we suggest to combine the proofs in this paper with techniques from inner model theory and replace $L$ by $M_1$, an inner model with a Woodin cardinal. 

\begin{question} 
Assume \PD. What is the supremum of lengths of countable $\Pi^1_3$-ranks? 
\end{question} 
 
Studying the lengths of projective prewellorders further suggests to study the related problem of proving basis theorems in order to find representatives for equivalence classes of prewellorders as in \cite{hjorth1993thin, schlicht2014thin}. 


\bibliographystyle{alpha}
\bibliography{References}
\end{document}